\newcommand{\black}{\color{black}}
\theoremstyle{plain}
\newtheorem{theorem}[equation]{Theorem}
\newtheorem{lemma}[equation]{Lemma}
\theoremstyle{definition}
\newtheorem{definition}[equation]{Definition}
\numberwithin{equation}{section}
\def\C{\mathbb{C}}
\def\D{\mathcal{D}}
\def\E{\mathbb{E}}
\def\F{\mathscr{F}}
\def\G{\mathscr{G}}
\def\H{\mathrm{H}}
\def\I{\mathbb{I}}
\def\N{\mathbb{N}}
\def\S{\mathscr{S}}
\def\R{\mathbb{R}}
\def\Rn{\R^n}
\def\Z{\mathcal{Z}}
\def\ZZ{\mathbb{Z}}
\def\loc{\operatorname{loc}}
\def\supp{\operatorname{supp}}
\def\BMO{\operatorname{BMO}}
\def\d{\operatorname{d}}
\def\rd{\operatorname{rd}}
\def\ch{\operatorname{ch}}
\def\w{\omega}
\renewcommand{\emptyset}{\text{\textup{\O}}}
\begin{document}

\author[M. Cao]{Mingming Cao}
\address{Mingming Cao\\
Instituto de Ciencias Matem\'aticas CSIC-UAM-UC3M-UCM\\
Con\-se\-jo Superior de Investigaciones Cient{\'\i}ficas\\
C/ Nicol\'as Cabrera, 13-15\\
E-28049 Ma\-drid, Spain} \email{mingming.cao@icmat.es}

\author[J. Chen]{Jiao Chen}
\address{Jiao Chen \\
School of Mathematical Sciences\\ 
Chongqing Normal University\\ 
Chongqing 400000\\ 
People's Republic of China} \email{chenjiaobnu@163.com}

\author[Z. Li]{Zhengyang Li}
\address{School of Mathematics $\&$ Computational Science\\
Hunan University of Science and Technology\\
Xiangtan 411201\\ 
People's Republic of China} \email{zhengyli@hnust.edu.cn}

\author[F. Liao]{Fanghui Liao} 
\address{School of Mathematics $\&$ Computational Science\\
Xiangtan University\\ 
Xiangtan 411105\\ 
People's Republic of China} \email{liaofh@xtu.edu.cn}

\author[K. Yabuta]{K\^{o}z\^{o} Yabuta}
\address{K\^{o}z\^{o} Yabuta\\
Research Center for Mathematics and Data Science\\
Kwansei Gakuin University\\
Gakuen 2-1, Sanda 669-1337\\
Japan} \email{kyabuta3@kwansei.ac.jp}

\author[J. Zhang]{Juan Zhang}
\address{Juan Zhang\\
School of Science\\
Beijing Forestry University\\
Beijing, 100083 \\
People's Republic of China}\email{juanzhang@bjfu.edu.cn}

\thanks{The first author acknowledges financial support from Spanish Ministry of Science and Innovation through the Ram\'{o}n y Cajal  2021 (RYC2021-032600-I), through the ``Severo Ochoa Programme for Centres of Excellence in R\&D'' (CEX2023-001347-S), and through PID2022-141354NB-I00. The third author was supported by National Natural Science Foundation of China (No. 12101222). The fourth author was supported Hunan Education Department Project, China(22B0155). The last author was supported by National Natural Science Foundation of China (No. 12101049). Part of this work was carried out while the first two authors were visiting the Hausdorff Center for Mathematics. The authors would like to express their gratitude to the HCM for their support.
}

\year=2025 \month=04 \day=29

\date{\today}

\subjclass[2020]{42B20, 42B25}


\keywords{
Zygmund dilations, 
Calder\'{o}n--Zygmund operators, 
Compactness,
$T1$ theorem, 
Dyadic analysis}

\begin{abstract}
We develop a compact version of $T1$ theorem for singular integrals of Zygmund type on $\mathbb{R}^3$. More specifically, if a $(D_{\theta}, \delta_1, \delta_{2, 3})$-Calder\'{o}n--Zygmund operator $T$ associated with Zygmund dilations admits the compact full and partial kernel representations, and satisfies the weak compactness property and the cancellation condition, then $T$ can be extended to a compact operator on $L^p(w)$ whenever (i) $p \in (1, \infty)$, $w \in A_{p, \mathcal{R}}$, and $\theta, \delta_1, \delta_{2, 3} \in (0, 1]$, or (ii) $p \in (1, \infty)$, $w \in A_{p, \mathcal{Z}}$, $\theta = \delta_1 = 1$, and $\delta_{2, 3} \in (0, 1]$. Here $A_{p, \mathcal{R}}$ and $A_{p, \mathcal{Z}}$ respectively denote the class of of strong $A_p$ weights and the class of Zygmund $A_p$ weights. Beyond that, under similar bilinear assumptions, we prove bilinear Calder\'{o}n--Zygmund operators associated with Zygmund dilations are compact from $L^{p_1}(\R^3) \times L^{p_2}(\R^3)$ to $L^p(\R^3)$ for all $p_1, p_2 \in (1, \infty)$, where $\frac1p = \frac{1}{p_1} + \frac{1}{p_2}$. The core of the proof is a compact dyadic representation, which asserts that under the hypotheses above, a (bilinear) Calder\'{o}n--Zygmund operator associated with Zygmund dilations can be represented an average of some compact (bilinear) dyadic shifts of Zygmund nature. This further deepens our understanding of the compactness of singular integral operators.
\end{abstract}

\title[A compact $T1$ theorem]
{A compact $T1$ theorem for Calder\'{o}n--Zygmund operators associated with Zygmund dilations}

\maketitle
\tableofcontents

\section{Introduction}

\subsection{Background and main results} 
The exploration of singular integrals originated from the study of singular convolution operators, such as the Hilbert and Riesz transforms. Calder\'{o}n and Zygmund \cite{CZ} extended the concept of the Hilbert transform on $\R$ to the convolution type of singular integral operators on $\R^{n}$. The core of this theory lies in the invariance of regularity and cancellation conditions under the one-parameter dilation group on $\R^{n}$ defined by $\delta(x_1, \ldots, x_n) = (\delta x_1, \ldots, \delta x_n)$, $\delta>0$. Certainly, the classical singular integrals, maximal functions, and multipliers exhibit invariance under one-parameter dilations.

It is well known that the translation or dilation invariance is not essential for the theory. David and Journ\'{e} \cite{DJ} first formulated the famous $T1$ theorem, which gives a characterization of the $L^2$ boundedness for Calder\'{o}n--Zygmund operators, which is a class of operators of non-convolution type without either of the two invariances. Soon after, Journ\'{e} \cite{J85} used the vector-valued Calder\'{o}n--Zygmund theory to  establish a bi-parameter $T1$ theorem for a class of non-convolution singular integral operators on product spaces. This generalized the multi-parameter theory due to Fefferman and Stein \cite{FS}, which studied convolution operators satisfying certain quantitative properties enjoyed by tensor products of operators of Calder\'{o}n--Zygmund type, as for instance the double Hilbert transform $\R^2$. Subsequently, Pott and Vilarroya \cite{PV} gave a new type of the $T1$ theorem on product spaces by means of several new mixed type conditions instead of the vector-valued formulations. Interestingly, it was shown in \cite{G} that the vector-valued formulations of Journ\'{e} \cite{J85} are equivalent to the full and partial kernel assumptions of Pott and Vilarroya \cite{PV} for bi-parameter singular integrals. Such result was extended to arbitrarily many parameter setting in \cite{Ou} using mixed type characterizing conditions. See \cite{CF80, CF85, Fef86, Fef87, Fef88, FL, MPTT, RS} for more developments about the multi-parameter theory.

In terms of the multi-parameter theory, it is widely accepted that the Zygmund dilation 
\begin{align*}
(x_1, x_2, x_3) \mapsto (\delta_1 x_1, \delta_2 x_2, \delta_1 \delta_2 x_3), \qquad \delta_1, \delta_2 > 0, 
\end{align*}
is commonly acknowledged as the next simplest multi-parameter dilation group after the multi-parameter dilations. Stein \cite{Fef86} first established a correlation between the properties of maximal operators associated with Zygmund dilations and the boundary value problems pertaining to Poisson integrals on symmetric spaces. After that, Ricci and Stein \cite{RS} introduced a class of singular integrals with more general dilations and obtained the boundedness for multi-parameter singular integral operators. Fefferman and Pipher \cite{FP} further considered specific singular integral operators associated with Zygmund dilations and obtained their boundedness on weighted Lebesgue spaces. Recently, Han, et al. \cite{HLLT} introduced a particular category of singular integral operators associated with Zygmund dilations by providing appropriate versions of regularity conditions and cancellation conditions for the convolution kernel, which generalizes Nagel--Wainger singular integrals in \cite{NW} and  Fefferman--Pipher multipliers in \cite{FP}. In another work \cite{HLLTW}, they developed the multi-parameter theory on weighted Hardy spaces associated with Zygmund dilations, which, in particular, proved endpoint estimates for Ricci--Stein operators \cite{RS} and Fefferman--Pipher multipliers \cite{FP}. Until very recently, the authors \cite{HLMV} developed the dyadic multiresolution analysis and the related dyadic-probabilistic methods in the Zygmund dilation setting, which is quite useful to achieve weighted boundedness for singular integrals of Zygmund type.

On the other hand, in terms of the compactness of singular integrals, Villarroya \cite{Vil} first established a new $T1$ theorem to characterize the compactness of Calder\'{o}n--Zygmund operators. The study of the compactness of singular integrals has found the practical significance in the fields of partial differential equations and geometric measure theory. For example, Fabes et al. \cite{FJR} solved the Dirichlet and Neumann boundary value problems based on the compactness of layer potentials, and Hofmann et al. \cite[Section 4.6]{HMT} used the compactness of layer potentials and commutators to characterize regular SKT domains.

It has been an open problem in the last ten years that how one can establish the $T1$ theorem to obtain the compactness of multi-parameter singular integrals. This problem was first solved by Cao, Yabuta, and Yang \cite{CYY} for bi-parameter singular integrals. The proof method in \cite{CYY} is quite different from that of \cite{Vil}. In addition, the authors applied the idea of extrapolation of compactness \cite{COY, HL}, which is extremely effective in the bi-parameter theory and has played a significant role in the research of weighted compactness, such as multilinear commutators \cite{CIRXY}, bilinear Calder\'{o}n--Zygmund operators \cite{CLSY}, and multi-parameter singular integrals \cite{CY, CYY}.

However, so far, there is no work on the compactness in the Zygmund dilation setting. Therefore, in this paper, our objective is to formulate a general framework encompassing a diverse set of singular integrals associated with Zygmund dilations and develop a comprehensive theory of compactness for these operators.

To set the stage, let us give the definition of compactness of operators. Given $m \ge 1$ and normed spaces $\mathscr{X}_1, \ldots, \mathscr{X}_m, \mathscr{X}$, an $m$-linear operator $T: \mathscr{X}_1 \times \cdots \times \mathscr{X}_m \to \mathscr{X}$ is called \emph{compact} if for all bounded sets $B_j \subset \mathscr{X}_j$, $j=1, \ldots, m$, the set $T(B_1 \times \cdots \times B_m)$ is precompact in $\mathscr{X}$, i.e., $\overline{T(B_1 \times \cdots \times B_m)}$ is a compact subset of $\mathscr{X}$. More definitions and notation are postponed until later.

\newpage 
Our first main result can be formulated as follows. 

\begin{theorem}\label{thm:cpt} 
Let $\theta, \delta_1, \delta_{2, 3} \in (0, 1]$. Assume that $T$ is a $(D_{\theta}, \delta_1, \delta_{2, 3})$-CZZ operator (cf. Definition {\rm \ref{def:CZO}}) satisfying the hypotheses:
\begin{list}{\rm (\theenumi)}{\usecounter{enumi}\leftmargin=1.2cm \labelwidth=1cm \itemsep=0.2cm \topsep=0.2cm \renewcommand{\theenumi}{H\arabic{enumi}}}

\item\label{H1} $T$ admits the compact full kernel representation (cf. Definition $\ref{def:full}$), 

\item\label{H2} $T$ admits the compact partial kernel representation (cf. Definition $\ref{def:partial}$), 

\item\label{H3} $T$ satisfies the weak compactness property (cf. Definition $\ref{def:WCP}$), 

\item\label{H4} $T$ satisfies the cancellation condition (cf. Definition $\ref{def:cancellation}$). 
\end{list} 
Then the following statements hold: 
\begin{list}{\rm (\theenumi)}{\usecounter{enumi}\leftmargin=1.2cm \labelwidth=1cm \itemsep=0.2cm \topsep=0.2cm \renewcommand{\theenumi}{\alph{enumi}}}

\item\label{list-01} If $\theta, \delta_1, \delta_{2, 3} \in (0, 1]$, then $T$ is compact on $L^p(w)$ for all $p \in (1, \infty)$ and $w \in A_{p, \mathcal{R}}$. 

\item\label{list-02} If $\theta = \delta_1 = 1$ and $\delta_{2, 3} \in (0, 1]$, then $T$ is compact on $L^p(w)$ for all $p \in (1, \infty)$ and $w \in A_{p, \mathcal{Z}}$. 

\item\label{list-03} Both \eqref{list-01} and \eqref{list-02} hold if $T$ is a $(D_{\log}, \delta_1, \delta_{2, 3})$-CZZ operator satisfying the hypotheses \eqref{H1}--\eqref{H4} with $D_{\theta}$ replaced by $D_{\log}$.
\end{list} 
\end{theorem}

As shown above, the parameter $\theta$ is a delicate threshold in the weighted compactness. In the case \eqref{list-02}, there is a full weighted compactness in terms of the class of Zygmund weights $A_{p, \mathcal{Z}}$; while in the case \eqref{list-01}, the weighted compactness only holds in the class of strong weights $A_{p, \mathcal{R}}$, which is strictly smaller than $A_{p, \mathcal{Z}}$. Additionally, the weighted compactness is not true in the class of $A_{p, \mathcal{Z}}$ whenever $\theta \in (0, 1)$ because of the failure of the weighted boundedness in this scenario (cf. \cite[Proposition 2.18]{HLMV}).

Our second main result concerns the compactness of bilinear Calder\'{o}n--Zygmund operators associated with Zygmund dilations. Significantly, we successfully achieve the full range of exponents in the bilinear case.

\begin{theorem}\label{thm:Bcpt} 
Let $\vartheta \in (0, 2]$ and $\theta, \delta_1, \delta_{2, 3} \in (0, 1]$. Assume that $T$ is a $(\mathfrak{D}_{\vartheta}, D_{\theta}, \delta_1, \delta_{2, 3})$-BCZZ operator (cf. Definition {\rm \ref{def:BCZO}}) satisfying the hypotheses:
\begin{list}{\rm (\theenumi)}{\usecounter{enumi}\leftmargin=1.2cm \labelwidth=1cm \itemsep=0.2cm \topsep=0.2cm \renewcommand{\theenumi}{B\arabic{enumi}}}

\item\label{B1} $T$ admits the compact full kernel representation (cf. Definition $\ref{def:Bfull}$), 

\item\label{B2} $T$ admits the compact partial kernel representation (cf. Definition $\ref{def:Bpartial}$), 

\item\label{B3} $T$ satisfies the weak compactness property (cf. Definition $\ref{def:BWCP}$), 

\item\label{B4} $T$ satisfies the cancellation condition (cf. Definition $\ref{def:Bcancellation}$). 
\end{list} 
Then $T$ is compact from $L^{p_1}(\R^3) \times L^{p_2}(\R^3)$ to $L^p(\R^3)$ for all $p_1, p_2 \in (1, \infty)$, where $\frac1p = \frac{1}{p_1} + \frac{1}{p_2}$. 
\end{theorem}

It is well-known that the compactness is strictly stronger than the boundedness. The lack of weighted compactness in Theorem \ref{thm:Bcpt} is due to the absence of genuine multilinear weighted estimates in the entangled Zygmund setting. Once the latter is obtained, one can use the method in this current paper to conclude the weighted compactness of bilinear Calder\'{o}n--Zygmund operators associated with Zygmund dilations.

\subsection{Proof strategy of Theorem \ref{thm:cpt}}
Let us briefly outline how to prove Theorem \ref{thm:cpt}. Our proof relies on a compact dyadic representation in the Zygmund dilation setting as follows.

\begin{theorem}\label{thm:repre}
Let $T$ be a $(D_{\theta}, \delta_1, \delta_{2, 3})$-CZZ operator (cf. Definition {\rm \ref{def:CZO}}), where $\theta, \delta_1, \delta_{2, 3} \in (0, 1]$. Assume that $T$ satisfies the hypotheses \eqref{H1}--\eqref{H4}. Then $T$ admits a compact dyadic representation (cf. Definition {\rm \ref{def:repre}}). 
\end{theorem}

Compared with \cite{CYY}, Theorem \ref{thm:repre} is a new ingredient in the study of (weighted) compactness of multi-parameter singular integrals. It captures the precise dyadic structure behind compact Calder\'{o}n--Zygmund operators associated with Zygmund dilations. Such a dyadic representation was first established in \cite{Hyt} in order to show the sharp weighted norm inequality for Calder\'{o}n--Zygmund operators. A bi-parameter analogue was obtained in \cite{Mar}, which can be applied to two-weight commutators estimates \cite{HPW}. It was recently extended to the Zygmund dilation setting in \cite{ALM, HLMV}.

Let us explain why both approaches in \cite{CYY, Vil} are invalid in the Zygmund dilation setting. First, wavelet analysis was successfully applied to study compactness in \cite{Vil}, but there is no a wavelet theory in Zygmund setting. Additionally, it is impossible to parametrize the sums according to eccentricities and relative distances of rectangles because the third sidelength of a Zygmund rectangle depends on the first two sidelength. Second, although bi-parameter dyadic analysis in \cite{CYY} is much more natural and simpler than that in \cite{Vil}, the current Zygmund setting is actually something strictly in between bi-parameter and tri-parameter. If we perform the decomposition as in \cite[Section 3]{CYY}, then there holds
\begin{align*}
\langle Tf, g \rangle 
= \sum_{I, J \in \D_{\mathcal{Z}}} \langle T(\Delta_{I, \mathcal{Z}} f), \Delta_{J, \mathcal{Z}} g \rangle,
\end{align*}
where $\D_{\mathcal{Z}} := \{I = I_1 \times I_2 \times I_3 \in \D^1 \times \D^2 \times \D^3: \ell(I_3) = \ell(I_1) \ell(I_2)\}$ and $\D^1, \D^2, \D^3$ are dyadic grids on $\R$. Given $I, J \in \D_{\Z}$, the size relationship between them must satisfy one of the following: 
\begin{enumerate}
\item $\ell(I_1) \le \ell(J_1)$, $\ell(I_2) \le \ell(J_2)$ (hence, $\ell(I_3) \le \ell(J_3)$); 

\item $\ell(I_1) \le \ell(J_1)$, $\ell(I_2) > \ell(J_2)$, and $\ell(I_3) \le \ell(J_3)$;

\item $\ell(I_1) \le \ell(J_1)$, $\ell(I_2) > \ell(J_2)$, and $\ell(I_3) > \ell(J_3)$;

\item $\ell(I_1) > \ell(J_1)$, $\ell(I_2) \le \ell(J_2)$, and $\ell(I_3) \le \ell(J_3)$; 

\item $\ell(I_1) > \ell(J_1)$, $\ell(I_2) \le \ell(J_2)$, and $\ell(I_3) > \ell(J_3)$;

\item $\ell(I_1) > \ell(J_1)$, $\ell(I_2) > \ell(J_2)$ (hence, $\ell(I_3) > \ell(J_3)$).
\end{enumerate}
It is easy to see that the cases (1), (3), (4), and (6) have structure in the parameters $\{1\}$ and $\{2, 3\}$, while the cases (2) and (5) have structure in the parameters $\{2\}$ and $\{1, 3\}$. It indicates that the kernel estimates must hold with respect to both the parameter grouping $\{1\}$, $\{2, 3\}$ and the parameter grouping $\{2\}$, $\{1, 3\}$, if one would like to obtain similar estimates as in \cite{CYY}. This will greatly narrow the scope of operators to be considered. Therefore, we have to seek a new method that is different from those in \cite{CYY, Vil}. In order to overcome the difficulties aforementioned, we will borrow the idea of \cite{ALM} to have Zygmund resolution of operators with $\ell(I_i) = \ell(J_i)$, $i=1, 2, 3$.

With Theorem \ref{thm:repre} in hand, Theorem \ref{thm:cpt} is reduced to showing the following.

\begin{theorem}\label{thm:SD-cpt}
Let $k = (k_1, k_2, k_3) \in \N^3$. Suppose that the family $\{\mathbf{S}_{\D_{\w}}^k\}_{\w \in \Omega}$ is of the same type (cf. Definitions {\rm \ref{def:shift}}). Then $\E_{\w} \mathbf{S}_{\D_{\w}}^k$ is compact on $L^p(w)$ for all $p \in (1, \infty)$ and $w \in A_{p, \mathcal{Z}}$. 
\end{theorem}

To avoid troubles caused by Zygmund weights, in the proof of Theorem \ref{thm:SD-cpt}, our idea is to just obtain unweighted compactness instead of weighted compactness. For this purpose, we establish extrapolation of compactness inspired by \cite{CYY, HL}. The result below states that the weighted compactness of a linear operator $T$ in the full range follows from the unweighted compactness on one space, once the weighted boundedness of $T$ is shown on a certain Lebesgue space.

\begin{theorem}\label{thm:RdF-cpt}
Let $\mathcal{B} \in \{\mathcal{R}, \mathcal{Z}\}$. Assume that $T$ is a linear operator such that  
\begin{list}{\rm (\theenumi)}{\usecounter{enumi}\leftmargin=1.2cm \labelwidth=1cm \itemsep=0.2cm \topsep=0.2cm \renewcommand{\theenumi}{\alph{enumi}}}

\item\label{RdFcpt-1} $T$ is compact on $L^{p_0}(w_0)$ for some $p_0 \in [1, \infty)$ and for some $w_0 \in A_{p_0, \mathcal{B}}$. 

\item\label{RdFcpt-2} $T$ is bounded on $L^{p_1}(w_1)$ for some $p_1 \in [1, \infty)$ and for all $w_1 \in A_{p_1, \mathcal{B}}$. 
\end{list} 
Then $T$ is compact on $L^p(w)$ for all $p \in (1, \infty)$ and for all $w \in A_{p, \mathcal{B}}$.  
\end{theorem}

\subsection{Proof strategy of Theorem \ref{thm:Bcpt}}
Similarly to the arguments above, to show Theorem \ref{thm:Bcpt}, it suffices to prove a compact bilinear dyadic representation and the compactness of the average of bilinear Zygmund shifts as follows.

\begin{theorem}\label{thm:Brepre}
Let $T$ be a $(D_{\vartheta}, D_{\theta}, \delta_1, \delta_{2, 3})$-BCZZ operator (cf. Definition {\rm \ref{def:BCZO}}), where $\vartheta \in (0, 2]$ and $\theta, \delta_1, \delta_{2, 3} \in (0, 1]$. Assume that $T$ satisfies the hypotheses \eqref{B1}--\eqref{B4}. Then $T$ admits a compact bilinear dyadic representation (cf. Definition {\rm \ref{def:Brepre}}). 
\end{theorem}

\begin{theorem}\label{thm:BSD-cpt}
Let $k = (k_1, k_2, k_3) \in \N^3$ and $\bm{l} = (l_1, l_2, l_3)$ with $l_j = (l_j^1, l_j^2, l_j^3) \in \N^3$. Suppose that the family $\{\mathbf{S}_{\D_{\w}}^{k, \bm{l}}\}_{\w \in \Omega}$ is of the same type (cf. Definition {\rm \ref{def:Bshift}}). Then $\E_{\w} \mathbf{S}_{\D_{\w}}^{k, \bm{l}}$ is compact from $L^{p_1}(\R^3) \times L^{p_2}(\R^3)$ to $L^p(\R^3)$ for all $p_1, p_2 \in (1, \infty)$, where $\frac1p = \frac{1}{p_1} + \frac{1}{p_2}$. 
\end{theorem}

Although the quasi-Banach compactness has been given in Theorem \ref{thm:BSD-cpt}, Theorem \ref{thm:Brepre} enables us to only obtain the compactness in Banach range in Theorem \ref{thm:Bcpt}. Although extrapolation of bilinear compact operators allows one to achieve the full range, there is no need to do so in the unweighted setting. In fact, considering Theorem \ref{thm:BSD-cpt} and the boundedness in the full range, we could apply the following interpolation for multilinear compact operators (cf. \cite[Theorem 3.4]{COY}).

\begin{theorem}\label{thm:inter-cpt}
Let $p_0, q_0 \in (0, \infty)$ and $p_1, q_1, p_2, q_2 \in [1, \infty]$. Assume that $T$ is a bilinear operator such that
\begin{list}{\rm (\theenumi)}{\usecounter{enumi}\leftmargin=1.2cm \labelwidth=1cm \itemsep=0.2cm \topsep=0.2cm \renewcommand{\theenumi}{\alph{enumi}}}

\item\label{inter-1} $T$ is compact from $L^{p_1}(\Rn) \times L^{p_2}(\Rn)$ to $L^{p_0}(\Rn)$;

\item\label{inter-2} $T$ is bounded from $L^{q_1}(\Rn) \times L^{q_2}(\Rn)$ to $L^{q_0}(\Rn)$. 
\end{list} 
Then $T$ is compact from $L^{r_1}(\Rn) \times L^{r_2}(\Rn)$ to $L^{r_0}(\Rn)$, where $\frac{1}{r_j} = \frac{1-\eta}{p_j} + \frac{\eta}{q_j}$ and $\eta \in (0, 1)$, $j = 0, 1, 2$.
\end{theorem}

This paper is organized as follows. Section \ref{sec:pre} contains some preliminaries including notation,  definitions, and elementary estimates. In Section \ref{sec:CZZ}, we define Calder\'{o}n--Zygmund operators associated with Zygmund dilations, compact Zygmund shifts, and a compact dyadic representation. Section \ref{sec:aux} is devoted to presenting some auxiliary results, which will be used in Section \ref{sec:cdr} to show Theorem \ref{thm:repre}. Besides, in Section \ref{sec:Zd}, we first prove Theorems \ref{thm:SD-cpt} and \ref{thm:RdF-cpt}, then combine them with Theorem \ref{thm:repre} to justify Theorem \ref{thm:cpt}. After that, we introduce bilinear Calder\'{o}n--Zygmund operators associated with Zygmund dilations, compact bilinear Zygmund shifts, and a compact bilinear dyadic representation in Section \ref{sec:BCZZ}, and then give the proof of  \ref{thm:Brepre} in Section \ref{sec:Brepre}. Finally, in Section \ref{sec:Bcpt}, we demonstrate Theorem \ref{thm:BSD-cpt}, which along with Theorem \ref{thm:inter-cpt} implies Theorem \ref{thm:Bcpt}.

\section{Preliminaries}\label{sec:pre}

\subsection{Notation}
We begin with some useful notation. 
\begin{itemize}
\item For convenience, we always use the $\ell^{\infty}$ metric on $\Rn$.
 
\item Let $\N := \{0, 1, 2, \ldots,\}$ be the set of natural numbers. 

 \item Let $\N_+ := \{1, 2, \ldots,\}$ be the set of positive integers. 
 
\item Let $\I := [-\frac12, \frac12]^n$ and $\lambda \I := [-\frac{\lambda}{2}, \frac{\lambda}{2}]^n$ for any $\lambda>0$. 

\item Given a measurable set $E \subset \Rn$, let $|E|$ denote the Lebesgue measure of $E$. 

\item Given a cube $I \subset \Rn$, we denote its center by $c_I$ and its side length by $\ell(I)$. For any $\lambda>0$, we denote by $\lambda I$ the cube with the center $c_I$ and side length $\lambda \ell(I)$. 

\item Let $\D$ denote a generic dyadic grid on $\Rn$ (see Section \ref{sec:dyadic}). 

\item Given $I \in \D$ and $k \in \N$, set $\ch(I):=\{I' \in \D: I' \subset I, \ell(I')=\ell(I)/2\}$ and let $I^{(k)}$ denote the unique dyadic cube $J \in \D$ so that $I \subset J$ and $\ell(J)=2^k \ell(I)$. 

\item Define $\d(E, F) := \inf\{|x-y|: x \in E, y \in F\}$ for any nonempty subsets $E, F \subset \Rn$.

\item Given $f \in L^1_{\loc}(\Rn)$ and a measurable set $E \subset \Rn$ with $0<|E|<\infty$, we write $\langle f \rangle_E := \fint_E f \, dx = \frac{1}{|E|} \int_E f \, dx$.

\item Let $\rd(I, J) := 1 + \frac{\d(I, J)}{\max\{\ell(I), \, \ell(J)\}}$ be the relative distance between cubes $I$ and $J$ in $\Rn$.

\item Given rectangles $I=I_1 \times \cdots \times I_n$ and $J=J_1 \times \cdots \times J_n$ in $\Rn$, the relative distance between them is defined by $\rd(I, J) :=\max\{\rd(I_1, J_1), \ldots, \rd(I_n, J_n)\}$.

\item For any $N \in \N$, denote $\D(N) := \big\{I \in \D:  \, 2^{-N} \le \ell(I) \le 2^N, \, \rd(I, 2^N \I) \le N\}$. 

\item Let $\mathcal{R}$ be the collection of all rectangles in $\R^3$ with sides parallel to the coordinate axes. 

\item Let $\Z := \{I \in \mathcal{R}: \ell(I_3) = \ell(I_1) \ell(I_2)\}$ denote the collection of all Zygmund rectangles.

\item We shall use $a\lesssim b$ and $a \simeq b$ to mean, respectively, that $a\leq C b$ and $0<c\leq a/b\leq C$, where the constants $c$ and $C$ are harmless positive constants, not necessarily the same at each occurrence, which depend only on dimension and the constants appearing in the hypotheses of theorems. 
\end{itemize}

\subsection{Dyadic grids}\label{sec:dyadic}
Let $\D_0$ be the standard dyadic grid on $\Rn$:
\begin{align*}
\D_0 := \big\{2^{-k}([0, 1)^n + m): k \in \ZZ, \, m \in \ZZ^n \big\}.
\end{align*}
Let $\Omega := (\{0, 1\}^n)^{\ZZ}$ and let $\mathbb{P}_{\w}$ be the natural probability measure on $\Omega$: each component $\w_j$ has an equal probability $2^{-n}$ of taking any of the $2^n$ values in $\{0, 1\}^n$, and all components are independent of each other. Let $\E_{\w}$ denote the expectation over the random variables $\w_j$, $j \in \ZZ$. 

Given $\w=(\w_j)_{j \in \ZZ} \in \Omega$, the \emph{random dyadic grid} $\D_{\w}$ on $\Rn$ is defined by
\begin{align*}
\D_{\w} := \bigg\{Q+\w := Q + \sum_{j: 2^{-j}< \ell(Q)} 2^{-j} \w_j : Q \in \D_0\bigg\}.
\end{align*}
For any integer $k \ge 2$,  a cube $Q \in \D_{\w}$ is called \emph{$k$-\rm{good}}, denoted by $Q \in \D_{\w}^k$, if 
\begin{align*}
\d(Q, \partial Q^{(k)}) \ge \ell(Q^{(k)})/4,  
\end{align*}
where $Q^{(k)}$ is the $k$th ancestor of $Q$, i.e., $Q \subset Q^{(k)}$ and $\ell(Q^{(k)}) = 2^k \ell(Q)$. 
It was shown in \cite{GH} that 
\begin{align}\label{pigd}
\pi_{\text{good}} := \mathbb{P}_{\w}(Q + \w \in \D_{\w}^k) = 2^{-n}, 
\quad \forall \, Q \in \D_0 \text{ and } k \ge 2.
\end{align}
In what follows, by a \emph{dyadic grid} $\D$ we mean $\D = \D_{\w}$ for some $\w \in \Omega$.

\subsection{Haar functions}\label{sec:Haar}
Given a rectangle $I = I_1 \times \cdots \times I_n \subset \Rn$ and $\eta=(\eta_1, \ldots, \eta_n) \in \{0, 1\}^n$, the \emph{Haar function} $h_I^{\eta}$ is defined by 
\begin{align*}
h_I^{\eta} := h_{I_1}^{\eta_1} \otimes \cdots \otimes h_{I_n}^{\eta_n},
\end{align*} 
where $h_{I_i}^0 = |I_i|^{-\frac12} \mathbf{1}_{I_i}$ and $h_{I_i}^1 = |I_i|^{-\frac12}(\mathbf{1}_{I_i^-} - \mathbf{1}_{I_i^+})$ for every $i = 1, \ldots, n$. Here $I_i^-$ and $I_i^+$ are the left and right halves of the interval $I_i$ respectively. Note that for any $\eta \neq 0$, the Haar function is cancellative: $\int_{\Rn} h_I^{\eta} \, dx = 0$. We usually suppress the presence of $\eta$ and simply write $h_I$ for some $h_I^{\eta}$, $\eta \neq 0$. 

If we view $\Rn$ as the bi-parameter product space $\Rn = \R^{n_1} \times \R^{n_2}$, and $I_1 \times I_2 \subset \R^{n_1} \times \R^{n_2}$, then $h_{I_1 \times I_2}$ still refers to the one-parameter Haar function: $h_{I_1 \times I_2} = h^{\eta}_{I_1 \times I_2}$ for some $\eta \in \{0, 1\}^n \setminus \{0\}$. It is not the bi-parameter Haar function on the rectangle $I_1 \times I_2$, which should have the different form $h^{\alpha}_{I_1} \otimes h^{\beta}_{I_2}$ for some $\alpha \in \{0, 1\}^{n_1} \setminus \{0\}$ and $\beta \in \{0, 1\}^{n_2} \setminus \{0\}$. In what follows, when a product object is required, we write it explicitly with this tensor notation.

Given a dyadic grid $\D$ on $\Rn$ and $I \in \D$, we define 
\begin{align*}
E_I f := \langle f \rangle_I \mathbf{1}_I
\quad\text{ and }\quad 
\Delta_I f := \sum_{I' \in \ch(I)} \big(\langle f \rangle_{I'} - \langle f \rangle_I\big) \mathbf{1}_{I'}.
\end{align*}
Then one can see that 
\begin{align}\label{eq:EDF}
E_I f = \langle f, h_I^0 \rangle h_I^0 
\quad\text{ and }\quad 
\Delta_I f = \sum_{\eta \in \{0, 1\}^n \setminus \{0\}} \langle f, h_I^{\eta} \rangle h_I^{\eta}
=: \langle f, \, h_I \rangle \, h_I,
\end{align}
where we suppress the finite $\eta$ summation in the last step. Since all the cancellative Haar functions on $\Rn$ form an orthonormal basis of $L^2(\Rn)$, for each $f \in L^2(\Rn)$, we may write
\begin{align}\label{eq:fhh} 
f = \sum_{I \in \D}  \langle f, h_I \rangle \, h_I
=\sum_{I \in \D} \Delta_I f.
\end{align}

Given $k \in \ZZ$, if we define 
\begin{align}\label{def:Ek-1}
E_k f := \sum_{I \in \D : \, \ell(I)=2^k} \langle f \rangle_I \, \mathbf{1}_I 
\quad\text{ and }\quad 
\Delta_k f:= E_{2^{k-1}} f - E_{2^k} f,  
\end{align}
then 
\begin{align}\label{def:Ek-2}
E_k f = \sum_{I \in \D: \, \ell(I)>2^k} \Delta_I f 
\quad\text{ and }\quad  
\Delta_k f = \sum_{I \in \D: \, \ell(I)=2^k} \Delta_I f. 
\end{align}

\subsection{Resolution of functions on $\R^3$} 
We are working on $\R^3$ in this subsection. Given dyadic grids $\D^1$, $\D^2$, and $\D^3$ on $\R$, we define $\D := \D^1 \times \D^2 \times \D^3$ and 
\begin{align*}
\D_{\Z} := \big\{I=I_1 \times I_2 \times I_3 \in \D: \ell(I_3) = \ell(I_1) \ell(I_2) \big\}.
\end{align*} 
Given $I=I_1 \times I_2 \times I_3 =: I_1 \times I_{2, 3}\in \D_{\Z}$, the \emph{Zygmund martingale difference} is defined by 
\begin{align*}
\Delta_{I, \Z} f := \Delta_{I_1} \Delta_{I_{2, 3}} f, 
\quad \text{where } \, 
\Delta_{I_{2, 3}} f
:= \sum_{\substack{I'_2 \in \ch(I_2) \\ I'_3 \in \ch(I_3)}}
\big(\langle f \rangle_{I'_2 \times I'_3} - \langle f \rangle_{I_2 \times I_3} \big) \mathbf{1}_{I'_2 \times I'_3} .
\end{align*}
Note that $\Delta_{2, 3}$ is the one-parameter martingale difference operator on the rectangle $I_{2, 3}$, which is quite different from the bi-parameter martingale difference operator $\Delta_{I_2} \Delta_{I_3}$. In addition, it is easy to check that for all $I, J \in \D_{\Z}$, 
\begin{align*}
\Delta_{I, \Z} \Delta_{J, \Z} = \Delta_{I, \Z} \mathbf{1}_{\{I=J\}}, \qquad 
\int_{\R} \Delta_{I, \Z} f \, dx_1 = 0 
= \int_{\R^2} \Delta_{I, \Z} f \, dx_2 \, dx_3, 
\end{align*}
and 
\begin{align*}
\Delta_{I, \Z} f 
=\Delta_{I_1} \Delta_{I_{2, 3}} f 
= \langle f, h_{I_1} \otimes h_{I_{2, 3}} \rangle h_{I_1} \otimes h_{I_{2, 3}}
=: \langle f, h_{I, \Z} \rangle h_{I, \Z}. 
\end{align*}
For any $\lambda=2^k$ with $k \in \mathbb{Z}$, we define the dilated grid  
\begin{align*}
\D_{\lambda}^{2, 3} 
:= \big\{I_{2, 3} \in \D^{2, 3} = \D^2 \times \D^3: \ell(I_3) = \lambda \ell(I_2) \big\}.
\end{align*}
By means of these notation and \eqref{eq:fhh}, we may expand a function $f \in L^2(\R^3)$ as 
\begin{align}\label{eq:expand}
f = \sum_{I_1 \in \D^1} \Delta_{I_1} f 
= \sum_{I_1 \in \D^1} \sum_{I_{2, 3} \in \D^{2, 3}_{\ell(I_1)}} \Delta_{I_1}  \Delta_{I_{2, 3}} f 
=\sum_{I \in \D_{\Z}} \Delta_{I, \Z} f. 
\end{align}

\subsection{Weights and maximal operators}
Let $\mathcal{B}$ be a \emph{basis} of $\R^3$, that is, a collection of measurable sets in $\R^3$ such that $0 < |B| < \infty$ for every $B \in \mathcal{B}$ and $\R^3 = \bigcup_{B \in \mathcal{B}} B$. A measurable function $w$ on $\R^3$ is called a \emph{weight} if it is a.e. positive and locally integrable. Given $p \in (1, \infty)$ and a basis $\mathcal{B}$ of $\R^3$, we say that a weight $w$ on $\R^3$ belongs to the class $A_{p, \mathcal{B}}$ if
\begin{align*}
[w]_{A_{p, \mathcal{B}}} 
:= \sup_{B \in \mathcal{B}} \bigg(\fint_B w \, dx \bigg) \bigg(\fint_B w^{-\frac{1}{p-1}} dx \bigg)^{p-1} < \infty. 
\end{align*}

Given a basis $\mathcal{B}$ of $\R^3$, the associated maximal operator $M_{\mathcal{B}}$ is defined by 
\begin{align*}
M_{\mathcal{B}} f(x) 
:= \sup_{B \in \mathcal{B}} \langle |f| \rangle_B \, \mathbf{1}_B(x). 
\end{align*}
Let us give some examples of basis of $\R^3$: 
\begin{align*}
\Z & := \{I \in \mathcal{R}: \ell(I_3) = \ell(I_1) \ell(I_2)\}, 
\\ 
\D_{\Z} & := \{I \in \D: \ell(I_3) = \ell(I_1) \ell(I_2)\}, 
\\ 
\D_{\lambda} & := \{I \in \D: \ell(I_3) = \lambda \ell(I_1) \ell(I_2)\}, 
\\ 
\D_{\text{sub}} & := \{I \in \D: \ell(I_3) \le \ell(I_1) \ell(I_2)\},
\end{align*}
where $\lambda = 2^k$ with $k \in \ZZ$. Then 
\begin{align*}
M_{\D_{\Z}} \le M_{\D_{\text{sub}}} 
\quad \text{and} \quad 
M_{\D_{\lambda}} \le M_{\D_{\text{sub}}}, \quad \lambda = 2^k \text{ with } k \le 0.
\end{align*}
It follows from \cite[p. 453]{GR} and \cite[Theorem 1.2]{FP} that 
\begin{align}\label{MB}
\|M_{\mathcal{B}} f\|_{L^p(w)} 
\lesssim \|f\|_{L^p(w)}, \quad\forall p \in (1, \infty), \, w \in A_{p, \mathcal{B}}, \, 
\mathcal{B} \in \{\mathcal{R}, \mathcal{Z}\}. 
\end{align}
Moreover, it was shown in \cite[Section 5]{HLMV} that for any $p, r \in (1, \infty)$ and $w \in A_{p, \Z}$, 
\begin{align}
\label{MM-1}
&\bigg\| \bigg(\sum_{j \in \ZZ} |M_{\D_{\lambda}} f_j|^r \bigg)^{\frac1r}\bigg\|_{L^p(w)} 
\lesssim \max\{\lambda, 1\}^{1-\eta} \bigg\| \bigg(\sum_{j \in \ZZ} |f_j|^r \bigg)^{\frac1r}\bigg\|_{L^p(w)}, 
\\
\label{MM-2}
& \bigg\| \bigg(\sum_{j \in \ZZ} |M_{\D_{\rm{sub}}} f_j|^r \bigg)^{\frac1r}\bigg\|_{L^p(w)} 
\lesssim \bigg\| \bigg(\sum_{j \in \ZZ} |f_j|^r \bigg)^{\frac1r}\bigg\|_{L^p(w)},  
\\
\label{MM-3}
& \bigg\| \bigg(\sum_{K \in \D_{\lambda(k)}} |\mathcal{U}_{K, k} f|^2 \bigg)^{\frac12}\bigg\|_{L^p(w)} 
\lesssim (|k|+1) \|f\|_{L^p(w)},  
\end{align}
where $\eta = \eta(p, w) \in (0, 1)$, $\lambda(k) = 2^{-k_1-k_2+k_3}$, and 
\begin{align*}
\mathcal{U}_{K, k} f 
= \mathbf{1}_K \sum_{\substack{I \in \D_{\Z}: \, I_1 \subset K_1, \, \ell(I_1) \ge 2^{-k_1} \ell(K_1) 
\\ 2^{-k_1} \ell(K_2) \le 2^{k_2} \ell(I_2) \le 2^{\max\{k_2, k_3\}} \ell(K_2)}} \Delta_{I, \Z} f. 
\end{align*}

\section{Calder\'{o}n--Zygmund operators associated with Zygmund dilations}\label{sec:CZZ}
We are going to define Calder\'{o}n--Zygmund operators associated with Zygmund dilations. Let us first introduce some function spaces.

\begin{definition}\label{def:FF}
Let $\mathscr{F}$ consist of all triples $(F_1, F_2, F_3)$ of bounded functions $F_1, F_2, F_3: [0, \infty) \to [0, \infty)$ satisfying 
\begin{align*}
\lim_{t \to 0} F_1(t) 
=\lim_{t \to \infty} F_2(t) 
= \lim_{t \to \infty} F_3(t)
=0.  
\end{align*}
Let $\mathscr{F}_0$ be the collection of all bounded functions $F: \mathcal{I} \to [0, \infty)$ satisfying 
\begin{align*}
\lim_{|I| \to 0} F(I)
= \lim_{|I| \to \infty} F(I)
= \lim_{|c_I| \to \infty} F(I)
=0, 
\end{align*}
where $\mathcal{I}$ denotes the family of all intervals on $\R$. 
\end{definition}

Given a parameter $\theta \in (0, 1]$, denote 
\begin{align*}
D_{\theta}(x) := \bigg(\frac{|x_1| |x_2|}{|x_3|} + \frac{|x_3|}{|x_1| |x_2|} \bigg)^{-\theta} 
\quad \text{and} \quad 
D_{\log}(x) := D_1(x) \log \bigg(\frac{|x_1| |x_2|}{|x_3|} + \frac{|x_3|}{|x_1| |x_2|} \bigg)
\end{align*}
for any $x =(x_1, x_2, x_3) \in \R^3 \setminus \{x: x_1 x_2 x_3 = 0 \}$.

Let us proceed to give the compact full and partial kernel representations, and then define singular integral operators associated with Zygmund dilations. Let $\delta_1, \delta_{2, 3} \in (0, 1]$ be fixed numbers.

\begin{definition}\label{def:full}
A linear operator $T$ admits the \emph{compact full kernel representation} if the following hold. 
If $f = f_1 \otimes f_{2, 3}$ and $g = g_1 \otimes g_{2, 3}$ with $f_1, g_1 :\R \rightarrow \R$, $f_{2, 3}, g_{2, 3} :\R^2 \rightarrow \R$, 
$\supp(f_1) \cap \supp(g_1) = \emptyset$, and $\supp(f_{2, 3}) \cap \supp(g_{2, 3}) = \emptyset$, then 
\begin{align*}
\langle Tf, g \rangle 
= \int_{\R^3} \int_{\R^3} K(x, y) f(y) g(x) \, dx \, dy,
\end{align*}
where the kernel $K: (\R^3 \times \R^3) \setminus \big\{(x, y) \in \R^3 \times \R^3: x_1=y_1 \text{ or } x_2=y_2 \text{ or } x_3=y_3\big\} \rightarrow \C$ satisfies 
\begin{list}{\rm (\theenumi)}{\usecounter{enumi}\leftmargin=1.2cm \labelwidth=1cm \itemsep=0.2cm \topsep=.2cm \renewcommand{\theenumi}{\arabic{enumi}}} 

\item\label{full-1} the size condition: 
\begin{align*}
|K(x, y)| \leq D_{\theta}(x-y) \prod_{i=1}^3 \frac{F_i(x_i, y_i)}{|x_i - y_i|}. 
\end{align*}

\item\label{full-2} the H\"{o}lder conditions: 
\begin{align*}
& |K(x, y) - K((x_1, x'_2, x'_3), y) - K((x'_1, x_2, x_3), y) + K(x', y)| 
\\
& \leq \bigg(\frac{|x_1 - x'_1|}{|x_1 - y_1|}\bigg)^{\delta_1} 
\bigg(\frac{|x_2 - x'_2|}{|x_2 - y_2|} + \frac{|x_3 - x'_3|}{|x_3 - y_3|}\bigg)^{\delta_{2, 3}}
D_{\theta}(x-y) \prod_{i=1}^3 \frac{F_i(x_i, y_i)}{|x_i - y_i|} 
\end{align*}
whenever $|x_i - x'_i| \leq |x_i - y_i|/2$ for $i=1, 2, 3$, 
\begin{align*}
& |K(x, y) - K(x, (y_1, y'_2, y'_3)) - K(x, (y'_1, y_2, y_3)) + K(x, y')| 
\\
& \leq \bigg(\frac{|y_1 - y'_1|}{|x_1 - y_1|}\bigg)^{\delta_1} 
\bigg(\frac{|y_2 - y'_2|}{|x_2 - y_2|} + \frac{|y_3 - y'_3|}{|x_3 - y_3|}\bigg)^{\delta_{2, 3}}
D_{\theta}(x-y) \prod_{i=1}^3 \frac{F_i(x_i, y_i)}{|x_i - y_i|}
\end{align*}
whenever $|y_i - y'_i| \leq |x_i - y_i|/2$ for $i=1, 2, 3$, 
\begin{align*}
& |K(x, y) - K((x_1, x'_2, x'_3), y) - K(x, (y'_1, y_2, y_3)) + K(x_1, x'_2, x'_3, y'_1, y_2, y_3)| 
\\
& \leq \bigg(\frac{|y_1 - y'_1|}{|x_1 - y_1|}\bigg)^{\delta_1} 
\bigg(\frac{|x_2 - x'_2|}{|x_2 - y_2|} + \frac{|x_3 - x'_3|}{|x_3 - y_3|}\bigg)^{\delta_{2, 3}}
D_{\theta}(x-y) \prod_{i=1}^3 \frac{F_i(x_i, y_i)}{|x_i - y_i|}
\end{align*}
whenever $|y_1 - y'_1| \leq |x_1 - y_1|/2$ and $|x_i - x'_i| \leq |x_i - y_i|/2$ for $i=2, 3$, 
\begin{align*}
& |K(x, y) - K(x, (y_1, y'_2, y'_3)) - K((x'_1, x_2, x_3), y) + K((x'_1, x_2, x_3), (y_1, y'_2, y'_3))| 
\\
& \leq \bigg(\frac{|x_1 - x'_1|}{|x_1 - y_1|}\bigg)^{\delta_1} 
\bigg(\frac{|y_2 - y'_2|}{|x_2 - y_2|} + \frac{|y_3 - y'_3|}{|x_3 - y_3|}\bigg)^{\delta_{2, 3}}
D_{\theta}(x-y) \prod_{i=1}^3 \frac{F_i(x_i, y_i)}{|x_i - y_i|}
\end{align*}
whenever $|x_1 - x'_1| \leq |x_1 - y_1|/2$ and $|y_i - y'_i| \leq |x_i - y_i|/2$ for $i=2, 3$.

\item\label{full-3} the mixed size-H\"{o}lder conditions:  
\begin{align*}
|K(x, y) - K((x'_1, x_2, x_3), y)|
\leq \bigg(\frac{|x_1 - x'_1|}{|x_1 - y_1|}\bigg)^{\delta_1} 
D_{\theta}(x-y) \prod_{i=1}^3 \frac{F_i(x_i, y_i)}{|x_i - y_i|}, 
\end{align*}
whenever $|x_1 - x'_1| \leq |x_1-y_1|/2$,  
\begin{align*}
|K(x, y) - K((x_1, x'_2, x'_3), y)|
\le \bigg(\frac{|x_2 - x'_2|}{|x_2 - y_2|} + \frac{|x_3 - x'_3|}{|x_3 - y_3|}\bigg)^{\delta_{2, 3}}
D_{\theta}(x-y) \prod_{i=1}^3 \frac{F_i(x_i, y_i)}{|x_i - y_i|}, 
\end{align*}
whenever $|x_2 - x'_2| \leq |x_2 - y_2|/2$ and $|x_3 - x'_3| \leq |x_3 - y_3|/2$, 
\begin{align*}
|K(x, y) - K(x, (y'_1, y_2, y_3))|
\leq \bigg(\frac{|y_1 - y'_1|}{|x_1 - y_1|}\bigg)^{\delta_1} 
D_{\theta}(x-y) \prod_{i=1}^3 \frac{F_i(x_i, y_i)}{|x_i - y_i|}, 
\end{align*}
whenever $|y_1 - y'_1| \leq |x_1-y_1|/2$, and 
\begin{align*}
|K(x, y) - K(x, (y_1, y'_2, y'_3))|
\le \bigg(\frac{|y_2 - y'_2|}{|x_2 - y_2|} + \frac{|y_3 - y'_3|}{|x_3 - y_3|}\bigg)^{\delta_{2, 3}}
D_{\theta}(x-y) \prod_{i=1}^3 \frac{F_i(x_i, y_i)}{|x_i - y_i|}, 
\end{align*}
whenever $|y_2 - y'_2| \leq |x_2 - y_2|/2$ and $|y_3 - y'_3| \leq |x_3 - y_3|/2$. 

\item\label{full-4} the function $F_i$ above is given by 
\begin{align*}
F_i(x_i, y_i) := F_{i, 1}(|x_i - y_i|) F_{i, 2}(|x_i - y_i|) F_{i, 3}(|x_i + y_i|),  
\end{align*}
where $(F_{i, 1}, F_{i, 2}, F_{i, 3}) \in \mathscr{F}$, $i=1, 2, 3$.
\end{list}
\end{definition}

\begin{definition}\label{def:partial-1}
A linear operator $T$ admits a \emph{compact partial kernel representation on the parameter $\{1\}$} if the following hold. 
If $f = f_1 \otimes f_{2, 3}$ and $g = g_1 \otimes g_{2, 3}$ with $\supp(f_1) \cap \supp(g_1) = \emptyset$, then 
\begin{align*}
\langle Tf, g \rangle 
= \int_{\R} \int_{\R} K_{f_{2, 3}, g_{2, 3}}(x_1, y_1) f_1(y_1) g_1(x_1) \, dx_1 \, dy_1, 
\end{align*}
where the kernel $K_{f_{2, 3}, g_{2, 3}}: (\R \times \R) \setminus \big\{(x_1, y_1) \in \R \times \R: x_1 = y_1 \big\} \rightarrow \C$ satisfies  
\begin{list}{\rm (\theenumi)}{\usecounter{enumi}\leftmargin=1.2cm \labelwidth=1cm \itemsep=0.2cm \topsep=.2cm \renewcommand{\theenumi}{\arabic{enumi}}} 

\item\label{partial-11} the size condition:
\begin{align*}
|K_{f_{2, 3}, g_{2, 3}}(x_1,y_1)| 
\leq C(f_{2, 3}, g_{2, 3}) \frac{F_1(x_1, y_1)}{|x_1-y_1|}. 
\end{align*}

\item\label{partial-12} the H\"{o}lder conditions:
\begin{align*}
|K_{f_{2, 3}, g_{2, 3}}(x_1, y_1) - K_{f_{2, 3}, g_{2, 3}}(x'_1, y_1)|
\leq C(f_{2, 3}, g_{2, 3}) F_1(x_1, y_1) \frac{|x_1-x'_1|^{\delta_1}}{|x_1-y_1|^{1+\delta_1}}
\end{align*}
whenever $|x_1-x'_1| \leq |x_1-y_1|/2$, and 
\begin{align*}
|K_{f_{2, 3}, g_{2, 3}}(x_1, y_1) - K_{f_{2, 3}, g_{2, 3}}(x_1, y'_1)|
\leq C(f_{2, 3}, g_{2, 3}) F_1(x_1, y_1) \frac{|y_1-y'_1|^{\delta_1}}{|x_1-y_1|^{1+\delta_1}}
\end{align*}
whenever $|y_1-y'_1| \leq |x_1-y_1|/2$.

\item\label{partial-13} the function $F_1$ above is given by 
\begin{align*}
F_1(x_1, y_1) := F_{1, 1}(|x_1 - y_1|) F_{1, 2}(|x_1 - y_1|) F_{1, 3}(|x_1 + y_1|), 
\end{align*}
where $(F_{1, 1}, F_{1, 2}, F_{1, 3}) \in \mathscr{F}$.

\item\label{partial-14} the bound $C(f_{2, 3}, g_{2, 3})$ above verifies  
\begin{align*}
C(\mathbf{1}_{I_{2, 3}}, \mathbf{1}_{I_{2, 3}}) 
+ C(\mathbf{1}_{I_{2, 3}}, a_{I_{2, 3}}) 
+ C(a_{I_{2, 3}}, \mathbf{1}_{I_{2, 3}}) 
\le F_2(I_2) \, |I_2| \, F_3(I_3) \, |I_3|,  
\end{align*}
for all rectangles $I_{2, 3} \subset \R^2$ and all functions $a_{I_{2, 3}}$ satisfying $\supp(a_{I_{2, 3}}) \subset I_{2, 3}$, $|a_{I_{2, 3}}| \le 1$, and $\int_{\R^2} a_{I_{2, 3}} \, dx_{2, 3}=0$, where $F_2, F_3 \in \mathscr{F}_0$. 
\end{list} 
\end{definition}

\begin{definition}\label{def:partial-2}
A linear operator $T$ admits a \emph{compact partial kernel representation on the parameter $\{2, 3\}$} if the following hold. 
If $f = f_1 \otimes f_{2, 3}$ and $g = g_1 \otimes g_{2, 3}$ with $\supp(f_{2, 3}) \cap \supp(g_{2, 3}) = \emptyset$, then 
\begin{align*}
\langle Tf, g \rangle 
= \int_{\R^2} \int_{\R^2} K_{f_1, g_1}(x_{2, 3}, y_{2, 3}) f_{2, 3}(y_{2, 3}) g_{2, 3}(x_{2, 3}) \, dx_{2, 3} \, dy_{2, 3}, 
\end{align*}
where the kernel $K_{f_1, g_1}: (\R^2 \times \R^2) \setminus \big\{(x_{2, 3}, y_{2, 3}) \in \R^2 \times \R^2: x_2 = y_2 \text{ or } x_3=y_3 \big\} \rightarrow \C$ satisfies  
\begin{list}{\rm (\theenumi)}{\usecounter{enumi}\leftmargin=1.2cm \labelwidth=1cm \itemsep=0.2cm \topsep=.2cm \renewcommand{\theenumi}{\arabic{enumi}}} 

\item\label{partial-21} the size condition:
\begin{align*}
|K_{f_1, g_1}(x_{2, 3}, y_{2, 3})| 
\leq C(f_1, g_1) D_{\theta}(t, x_{2, 3} - y_{2, 3}) 
\prod_{i=2}^3 \frac{F_i(x_i, y_i)}{|x_i-y_i|}, 
\end{align*}
where $t:= |\supp f_1 \cup \supp g_1|$.

\item\label{partial-22} the H\"{o}lder conditions: 
\begin{align*}
& |K_{f_1, g_1}(x_{2, 3}, y_{2, 3}) - K_{f_1, g_1}(x'_{2, 3}, y_{2, 3})|
\\ 
& \leq C(f_1, g_1) \bigg(\frac{|x_2 - x'_2|}{|x_2 - y_2|} + \frac{|x_3 - x'_3|}{|x_3 - y_3|}\bigg)^{\delta_{2, 3}} 
D_{\theta}(t, x_{2, 3} - y_{2, 3}) \prod_{i=2}^3 \frac{F_i(x_i, y_i)}{|x_i-y_i|} 
\end{align*}
whenever $x'_{2, 3} =(x'_2, x'_3)$ satisfies $|x_2-x'_2| \leq |x_2-y_2|/2$ and $|x_3-x'_3| \leq |x_3-y_3|/2$, and 
\begin{align*}
& |K_{f_1, g_1}(x_{2, 3}, y_{2, 3}) - K_{f_1, g_1}(x_{2, 3}, y'_{2, 3})|
\\ 
& \leq C(f_1, g_1) \bigg(\frac{|y_2 - y'_2|}{|x_2 - y_2|} + \frac{|y_3 - y'_3|}{|x_3 - y_3|}\bigg)^{\delta_{2, 3}} 
D_{\theta}(t, x_{2, 3} - y_{2, 3}) \prod_{i=2}^3 \frac{F_i(x_i, y_i)}{|x_i-y_i|} 
\end{align*}
whenever $y'_{2, 3} =(y'_2, y'_3)$ satisfies $|y_2-y'_2| \leq |x_2-y_2|/2$ and $|y_3-y'_3| \leq |x_3-y_3|/2$. 

\item\label{partial-23} the function $F_i$ above is given by 
\begin{align*}
F_i(x_i, y_i) := F_{i, 1}(|x_i - y_i|) F_{i, 2}(|x_i - y_i|) F_{i, 3}(|x_i + y_i|), 
\end{align*}
where $(F_{i, 1}, F_{i, 2}, F_{i, 3}) \in \mathscr{F}$, $i=2, 3$.

\item\label{partial-24} the bound $C(f_1, g_1)$ above verifies  
\begin{align*}
C(\mathbf{1}_{I_1}, \mathbf{1}_{I_1}) 
+ C(\mathbf{1}_{I_1}, a_{I_1}) 
+ C(a_{I_1}, \mathbf{1}_{I_1}) 
\le F_1(I_1) \, |I_1|, 
\end{align*}
for all intervals $I_1 \subset \R$ and all functions $a_{I_1}$ satisfying $\supp(a_{I_1}) \subset I_1$, $|a_{I_1}| \le 1$, and $\int_{\R} a_{I_1} \, dx_1=0$, where $F_1 \in \mathscr{F}_0$. 
\end{list} 
\end{definition}

\begin{definition}\label{def:partial}
A linear operator $T$ admits the \emph{compact partial kernel representation} if it admits the compact partial kernel representation on the parameters $\{1\}$ and $\{2, 3\}$. 
\end{definition}

Moreover, we would like to define the compactness and cancellation assumptions for singular integrals associated with Zygmund dilations.

\begin{definition}\label{def:WCP}
We say that $T$ satisfies the \emph{weak compactness property} if 
\begin{align*}
|\langle T\mathbf{1}_I, \mathbf{1}_I \rangle|
\leq F_1(I_1) F_2(I_2) F_3(I_3) \, |I|, 
\end{align*}
for all Zygmund rectangles $I=I_1 \times I_2 \times I_3$, where $F_1, F_2, F_3 \in \mathscr{F}_0$. If the functions $F_1, F_2, F_3$ above are replaced by a uniform constant $C \ge 1$, we say that $T$ satisfies the \emph{weak boundedness property}.
\end{definition}

\begin{definition}\label{def:cancellation}
We say that $T$ satisfies the \emph{cancellation condition} if   
\begin{align*}
& \langle T(1 \otimes \mathbf{1}_{I_{2, 3}}), h_{J_1} \otimes \mathbf{1}_{J_{2, 3}} \rangle
= \langle T(\mathbf{1}_{I_1} \otimes 1), \mathbf{1}_{J_1} \otimes h_{J_{2, 3}} \rangle
= 0, 
\\
& \langle T(h_{I_1} \otimes \mathbf{1}_{I_{2, 3}}), 1 \otimes \mathbf{1}_{J_{2, 3}} \rangle 
= \langle T(\mathbf{1}_{I_1} \otimes h_{I_{2, 3}}), \mathbf{1}_{J_1} \otimes 1 \rangle
= 0, 
\end{align*}
for all rectangles $I = I_1 \times I_2 \times I_3=: I_1 \times I_{2, 3}$ and $J = J_1 \times J_2 \times J_3=: J_1 \times J_{2, 3}$.
\end{definition}

\begin{definition}\label{def:CZO}
Let $T$ be a linear operator. 
\begin{itemize}

\item We say that $T$ admits the \emph{full kernel representation} if the functions $F_1, F_2, F_3$ in the compact full kernel representation are replaced by a uniform constant $C \ge 1$. 

\item We say that $T$ admits the \emph{partial kernel representation} if the functions $F_1, F_2, F_3$ in the compact partial kernel representation are replaced by a uniform constant $C \ge 1$. 

\item $T$ is called a \emph{$(D_{\theta}, \delta_1, \delta_{2, 3})$-CZZ operator} if $T$ admits the full and partial kernel representations, and satisfies the weak boundedness property and the cancellation condition. 

\item As above, $T$ is called a \emph{$(D_{\log}, \delta_1, \delta_{2, 3})$-CZZ operator} if $D_{\theta}$ is replaced by $D_{\log}$. 
\end{itemize}
\end{definition}

Next, let us introduce dyadic shifts associated with Zygmund dilations, which own simple dyadic structure and are compact operators on $L^p(\R^3)$, $1 < p < \infty$. This is crucial to show the compactness of singular integral operators.

\begin{definition}\label{def:shift}
Given $k = (k_1, k_2, k_3) \in \N^3$ and $\D = \D^1 \times \D^2 \times \D^3$ with $\D^1, \D^2, \D^3$ being dyadic grids on $\R$, a \emph{compact Zygmund shift $\mathbf{S}_{\D}^k$ of complexity $k$} is an operator of the form  
\begin{align*}
\mathbf{S}_{\D}^k f
:= \sum_{K \in \D_{\lambda(k)}} 
\sum_{\substack{I, J \in \D_{\Z} \\ I^{(k)} = J^{(k)} = K}}   
a_{IJK} \, \langle f, \widetilde{h}_{I, \Z} \rangle \, \widetilde{h}_{J, \Z}, 
\end{align*}
where $\lambda(k) := 2^{-k_1 - k_2 + k_3}$, $\D_{\lambda(k)} := \{K \in \D: \ell(K_3) = \lambda(k) \ell(K_1) \ell(K_2)\}$, $I^{(k)} := I_1^{(k_1)} \times I_2^{(k_2)} \times I_3^{(k_3)}$, and 
\begin{equation*}
(\widetilde{h}_{I, \Z}, \widetilde{h}_{J, \Z}) 
\in 
\left\{
\begin{aligned}
& (h_{I, \Z}, h_{J, \Z}), 
\\ 
& (h_{I_1} \otimes H_{I_{2, 3}, J_{2, 3}}, \, h_{J, \Z}), \qquad\qquad \, 
(h_{I, \Z}, h_{J_1} \otimes H_{I_{2, 3}, J_{2, 3}}), 
\\ 
& (H_{I_1, J_1} \otimes h_{I_{2, 3}}, \, h_{J, \Z}), \qquad\qquad \, \, \, \, \, 
(h_{I, \Z}, H_{I_1, J_1} \otimes h_{J_{2, 3}}), 
\\
& (H_{I_1, J_1} \otimes H_{I_{2, 3}, J_{2, 3}}, \, h_{J, \Z}), \qquad \quad
(h_{I, \Z}, \, H_{I_1, J_1} \otimes H_{I_{2, 3}, J_{2, 3}}), 
\\
& (H_{I_1, J_1} \otimes h_{I_{2, 3}}, \, h_{J_1} \otimes H_{I_{2, 3}, J_{2, 3}}), \, \, 
(h_{I_1} \otimes H_{I_{2, 3}, J_{2, 3}}, \, H_{I_1, J_1} \otimes h_{J_{2, 3}})
\end{aligned}
\right\}
\end{equation*} 
with $H_{I_1, J_1} := h_{I_1}^0 - h_{J_1}^0$ and $H_{I_{2, 3}, J_{2, 3}} := h_{I_{2, 3}}^0 - h_{J_{2, 3}}^0$. Moreover, the coefficients $a_{IJK}$ satisfy  
\begin{align*}
|a_{IJK}| 
\le \mathbf{F}(K) \frac{|I|^{\frac12} |J|^{\frac12}}{|K|}
\end{align*}
with  
\begin{align*}
\mathbf{F}(K) \le 1 
\quad\text{and}\quad 
\lim_{N \to \infty} \mathbf{F}_N 
:= \lim_{N \to \infty} \sup_{\D} \sup_{K \not\in \D_{\lambda(k)}^N} 
\mathbf{F}(K) = 0,  
\end{align*}
where $K \notin \D_{\lambda(k)}^N$ means $K \in \D_{\lambda(k)} \setminus \D_{\lambda(k)}^N$, and 
\begin{align*}
\D_{\lambda(k)}^N 
:= \big\{K \in \D_{\lambda(k)}:  \, 
&2^{-N} \le \ell(K_1) \le 2^N, \, \rd(K_1, 2^N \I) \le N, 
\\ \nonumber 
&2^{-N} \le \ell(K_2) \le 2^N, \, \rd(K_2, 2^N  \I) \le N, \, 
\rd(K_3, 2^{2N} \I) \le 2N \big\}. 
\end{align*}
\end{definition}

Finally, we introduce the compact dyadic representation of Calder\'{o}n--Zygmund operators associated with Zygmund dilations. 

\begin{definition}\label{def:repre}
Given $\theta, \delta_1, \delta_{2, 3} \in (0, 1]$, we say that a $(D_{\theta}, \delta_1, \delta_{2, 3})$-CZZ operator $T$ admits a \emph{compact dyadic representation} if there exists a constant $C_0 = C_0(T) \in (0,  \infty)$ such that for all compactly supported and bounded functions $f, g$ on $\R^3$, 
\begin{align*}
\langle Tf, g \rangle
&= C_0 \, \E_{\w} 
\sum_{\substack{k = (k_1, k_2, k_2) \\ k_1, k_2, k_3 \ge 2}} 
\sum_{m=1}^{m_0} \varphi(k) 
\langle \mathbf{S}_{m, \D_{\w}}^k f, g \rangle, 
\end{align*} 
where 
\begin{align*}
& \text{$m_0$ is an absolute constant}, 
\\
& \varphi(k) := 2^{-k_1 \delta_1} 
2^{- k_2 \min\{\delta_{2, 3}, \, \theta\}} 
2^{- \max\{k_3 - k_1 - k_2, \, 0\} \theta}, \text{ and} 
\\
& \text{$\mathbf{S}_{m, \D_{\w}}^k$ is a compact Zygmund shift of complexity $k$ on $\D_{\w}$}.
\end{align*}
\end{definition}

\section{Auxiliary results}\label{sec:aux} 
This section is devoted to presenting some auxiliary estimates to show Theorem \ref{thm:repre}.

\subsection{Some elementary estimates}
We begin with the convergence of a series as follows.

\begin{lemma}\label{lem:kkk}
Let $\rho \ge 1$, $\alpha_2 > 0$, and $\alpha_0, \alpha_1 > \alpha_3 \ge 0$. Then for $i=1, 2, 3$ and $A \ge 0$, there holds 
\begin{align*}
\mathscr{I}_i(A) 
&:= \sum_{\substack{k_1, k_2, k_3 \ge 0 \\ k_i \ge A}}  
(k_1 + k_2 + k_3 + 1)^{\rho} \, 
\frac{2^{-k_1 \alpha_1} 2^{- k_2 \alpha_2} \, 2^{\max\{k_3 - k_2, \, 0\} \alpha_3}}{2^{\max\{k_3 - k_1 - k_2, \, 0\} \alpha_0}}
\\
& \lesssim (A+1)^{3\rho+4} \, 2^{-A\min\{\alpha_0 - \alpha_3, \, \alpha_1 - \alpha_3, \, \alpha_2\}/2},  
\end{align*}
where the implicit constant is independent of $A$.
\end{lemma}

\begin{proof}
For each $i=1, 2, 3$, we split 
\begin{align}\label{kkk-1}
\mathscr{I}_i(A) 
&\lesssim \sum_{\substack{k_1, k_2, k_3 \ge 0 \\ k_3 \ge k_1 + k_2 \\ k_i \ge A}}  
(k_1 + k_2 + k_3 + 1)^{\rho}
\frac{2^{-k_1 \alpha_1} 2^{-k_2 \alpha_2} 2^{(k_3 - k_2) \alpha_3}}{2^{(k_3 - k_1 - k_2) \alpha_0}} 
\\ \nonumber 
&\quad + \sum_{\substack{k_1, k_2, k_3 \ge 0 \\ k_2 \le k_3 < k_1 + k_2 \\ k_i \ge A}}   
(k_1 + k_2 + k_3 + 1)^{\rho} 2^{-k_1 \alpha_1} 2^{-k_2 \alpha_2} 2^{(k_3 - k_2) \alpha_3} 
\\ \nonumber 
&\quad + \sum_{\substack{k_1, k_2, k_3 \ge 0 \\ k_3 < k_2 \\ k_i \ge A}}    
(k_1 + k_2 + k_3 + 1)^{\rho} 2^{-k_1 \alpha_1} 2^{-k_2 \alpha_2}.   
\end{align}
Note that for all $k_1, k_2, k_3 \ge 0$, 
\begin{align}\label{kkk-2}
(k_1 + k_2 + k_3 + 1)^{\rho} 
\le (k_1 + k_2 + 1)^{\rho} (k_3 + 1)^{\rho}
\le (k_1 + 1)^{\rho} (k_2 + 1)^{\rho} (k_3 + 1)^{\rho},
\end{align} 
and 
\begin{align}\label{kkk-3}
\sum_{k_3 \ge k_1 + k_2} \frac{(k_3+1)^{\rho} \, 2^{(k_3 - k_2) \alpha_3}}{2^{(k_3 - k_1 - k_2) \alpha_0}} 
\lesssim 2^{k_1 \alpha_3} (k_1+k_2+1)^{\rho}. 
\end{align}
Thus, by \eqref{kkk-1}--\eqref{kkk-3},
\begin{align*}
\mathscr{I}_1(A) 
&\lesssim \sum_{\substack{k_1 \ge A \\ k_2 \ge 0}}
(k_1 + k_2 + 1)^{2 \rho +1} 2^{-k_1 (\alpha_1 - \alpha_3)} 2^{-k_2 \alpha_2}
\\
&\lesssim \sum_{k_1 \ge A} (k_1 + 1)^{2 \rho + 1} 2^{-k_1 (\alpha_1 - \alpha_3)} 
\sum_{k_2 \ge 0} (k_2 + 1)^{2 \rho + 1} 2^{-k_2 \alpha_2} 
\\
& \lesssim (A+1)^{2\rho + 1} 2^{-A (\alpha_1 - \alpha_3)},  
\end{align*}
and    
\begin{align*}
\mathscr{I}_2(A) 
&\lesssim \sum_{\substack{k_1 \ge 0 \\ k_2 \ge A}}
(k_1 + k_2 + 1)^{2 \rho +1} 2^{-k_1 (\alpha_1 - \alpha_3)} 2^{-k_2 \alpha_2}
\\
&\lesssim \sum_{k_1 \ge 0} (k_1 + 1)^{2 \rho + 1} 2^{-k_1 (\alpha_1 - \alpha_3)} 
\sum_{k_2 \ge A} (k_2 + 1)^{2 \rho + 1} 2^{-k_2 \alpha_2} 
\\
& \lesssim (A+1)^{2\rho + 1} 2^{-A \alpha_2}.
\end{align*}

To bound $\mathscr{I}_3(A) $, we present two primary estimates:  
\begin{align}\label{kkk-5}
\sum_{k_3 \ge A} \frac{(k_3 + 1)^{\rho} \, 2^{(k_3 - k_2) \alpha_3}}{2^{(k_3 - k_1 - k_2) \alpha_0}} 
\lesssim (A+1)^{\rho} \, 2^{-A (\alpha_0 - \alpha_3)} \, 2^{k_1 \alpha_0 + k_2(\alpha_0 - \alpha_3)},
\end{align}
and
\begin{align}\label{kkk-6}
\mathscr{J}(A) 
& := \sum_{\substack{k_1, k_2 \ge 0 \\ k_1 + k_2 < A}}
(k_1 + k_2 + 1)^{\rho + 1} 2^{-k_1 \alpha_1} 2^{-k_2 \alpha_2}  
 2^{-A (\alpha_0 - \alpha_3)} 2^{k_1 \alpha_0 + k_2(\alpha_0 - \alpha_3)} 
\\ \nonumber
& \lesssim (A+1)^{2\rho+4} \, 2^{-A \min\{\alpha_0 - \alpha_3, \, \alpha_1 - \alpha_3, \, \alpha_2\}}. 
\end{align}
Indeed, it is enough to treat \eqref{kkk-6} in the case $A \ge 2$. If $\alpha_0 - \alpha_3 < \alpha_2$, then 
\begin{align*}
\mathscr{J}(A) 
&\le 2^{-A (\alpha_0 - \alpha_3)} \sum_{0 \le k_1 < A} 
(k_1 + 1)^{\rho + 1} 2^{-k_1 (\alpha_1 - \alpha_0)}
\sum_{0 \le k_2 < A - k_1} (k_2 + 1)^{\rho + 1} 2^{-k_2(\alpha_2 - \alpha_0 + \alpha_3)}
\\ 
& \lesssim 2^{-A (\alpha_0 - \alpha_3)} 
\sum_{0 \le k_1 < A} (k_1 + 1)^{\rho + 1} 2^{-k_1 (\alpha_1 - \alpha_0)}, 
\end{align*}
which further gives that in the case $\alpha_0 < \alpha_1$, $\mathscr{J}(A) \lesssim 2^{-A (\alpha_0 - \alpha_3)}$; and in the case $\alpha_1 \le \alpha_0$, 
\begin{align*}
\mathscr{J}(A) 
\lesssim 2^{-A (\alpha_0 - \alpha_3)} A (A+1)^{\rho+1}\, 2^{A(\alpha_0 - \alpha_1)} 
\le (A+1)^{\rho+2} \, 2^{-A (\alpha_1 - \alpha_3)}.
\end{align*} 
If $\alpha_2 \le \alpha_0 - \alpha_3$, then 
\begin{align*}
\mathscr{J}(A) 
&\le 2^{-A (\alpha_0 - \alpha_3)} \sum_{0 \le k_1 < A} 
(k_1 + 1)^{\rho + 1} 2^{-k_1 (\alpha_1 - \alpha_0)} 
A (A+1)^{\rho + 1}\, 2^{(A - k_1) (\alpha_0 - \alpha_3 - \alpha_2)}
\\
&\le (A + 1)^{\rho + 2} \, 2^{-A \alpha_2} 
\sum_{0 \le k_1 < A} (k_1 + 1)^{\rho + 1} 2^{-k_1 (\alpha_1 - \alpha_3 - \alpha_2)}, 
\end{align*}
which yields that in the case $\alpha_2 < \alpha_1 - \alpha_3$, $\mathscr{J}(A) \lesssim (A + 1)^{\rho + 2} \, 2^{-A \alpha_2}$; and in the case $\alpha_1 - \alpha_3 \le \alpha_2$, 
\begin{align*}
\mathscr{J}(A) 
\le (A+1)^{\rho+2}\, 2^{-A \alpha_2} A (A+1)^{\rho+1}\, 2^{A(\alpha_2 - \alpha_1 + \alpha_3)} 
\le (A+1)^{2\rho + 4} \, 2^{-A (\alpha_1 - \alpha_3)}.
\end{align*} 
Hence, it follows from \eqref{kkk-1}--\eqref{kkk-6} that 
\begin{align*}
\mathscr{I}_3(A) 
& \lesssim \sum_{\substack{k_1, k_2 \ge 0 \\ k_1 + k_2 < A}} 
(k_1 + k_2 + 1)^{\rho+1} 2^{-k_1 \alpha_1} 2^{-k_2 \alpha_2} 
(A+1)^{\rho} \, 2^{-A (\alpha_0 - \alpha_3)} \, 2^{k_1 \alpha_0 + k_2(\alpha_0 - \alpha_3)}
\\ 
&\qquad + \sum_{\substack{k_1, k_2 \ge 0 \\ k_1 + k_2 \ge A}}   
(k_1 + k_2 + 1)^{2\rho} \, 2^{-k_1 (\alpha_1 - \alpha_3)} 2^{-k_2 \alpha_2} 
\\ 
&\qquad + \sum_{\substack{k_1, k_2 \ge 0 \\ k_1 + k_2 \ge A}}   
(k_1 + k_2 + 1)^{\rho+1} \, 2^{-k_1 (\alpha_1 - \alpha_3)} 2^{-k_2 \alpha_2} 
\\
&\qquad+ \sum_{\substack{k_1 \ge 0 \\ k_2 \ge A}}
(k_1 + k_2 + 1)^{\rho + 1} \, 2^{-k_1 \alpha_1} 2^{-k_2 \alpha_2}  
\\
&\lesssim (A+1)^{3\rho + 4} \, 2^{-A \min\{\alpha_0 - \alpha_3, \alpha_1 - \alpha_3, \alpha_2\}}
+ \sum_{\substack{k_1 \ge A/2 \\ k_2 \ge 0}}
(k_1 + k_2 + 1)^{2\rho} 2^{-k_1 (\alpha_1 - \alpha_3)} 2^{-k_2 \alpha_2} 
\\
&\qquad+ \sum_{\substack{k_1 \ge 0 \\ k_2 \ge A/2}}
(k_1 + k_2 + 1)^{2\rho} \, 2^{-k_1 (\alpha_1 - \alpha_3)} 2^{-k_2 \alpha_2}  
\\
&\lesssim (A+1)^{3\rho+4} \, 2^{-A \min\{\alpha_0 - \alpha_3, \, \alpha_1 - \alpha_3, \, \alpha_2\}} 
+ (A+1)^{2\rho} 2^{-A \min\{\alpha_1 - \alpha_3, \, \alpha_2\}/2}
\\
&\lesssim (A+1)^{3\rho+4} \, 2^{-A \min\{\alpha_0 - \alpha_3, \, \alpha_1 - \alpha_3, \, \alpha_2\}/2}. 
\end{align*}
This completes the proof. 
\end{proof}

Let us recall a technical lemma from \cite[Lemma 2.3]{CYY}.

\begin{lemma}\label{lem:improve}
Given $(F_{i, 1}, F_{i, 2}, F_{i, 3}) \in \F$, denote 
\begin{align*}
F_i(x_i, y_i) := F_{i, 1}(|x_i - y_i|) F_{i, 2}(|x_i - y_i|) F_{i, 3}(|x_i + y_i|),  \quad i=1, 2, 3. 
\end{align*}
\begin{list}{\rm (\theenumi)}{\usecounter{enumi}\leftmargin=1.2cm \labelwidth=1cm \itemsep=0.2cm \topsep=0.2cm \renewcommand{\theenumi}{\roman{enumi}}} 

\item\label{size-imp} For each $i=1, 2, 3$, there exists a triple $(F'_{i, 1}, F'_{i, 2}, F'_{i, 3}) \in \F$ such that $F'_{i, 1}$ is monotone increasing, $F'_{i, 2}$ and $F'_{i, 3}$ are monotone decreasing, and 
\begin{align*}
F_i(x_i, y_i) 
\le F'_i(x_i, y_i) 
:= F'_{i, 1}(|x_i-y_i|) F'_{i, 2}(|x_i-y_i|) F'_{i, 3} \bigg(1 + \frac{|x_i+y_i|}{1+|x_i-y_i|}\bigg). 
\end{align*}

\item\label{Holder-imp}
Assume that $F_{1, 1}$, $F_{2, 1}$, and $F_{3, 1}$ are monotone increasing. Then for each $i=1, 2, 3$, there exist $\delta'_i \in (0, \delta_i)$ and $(F'_{i, 1}, F'_{i, 2}, F'_{i, 3}) \in \F$ such that $F'_{i, 1}$ is monotone increasing, $F'_{i, 2}$ and $F'_{i, 3}$ are monotone decreasing, and 
\begin{align*}
F_i(x_i, y_i) \frac{|y_i-y'_i|^{\delta_i}}{|x_i-y_i|^{1 + \delta_i}} 
\le F'_i(x_i, y_i) \frac{|y_i-y'_i|^{\delta'_i}}{|x_i-y_i|^{1 + \delta'_i}}, 
\end{align*}
whenever $|y_i-y'_i| \leq |x_i-y_i|/2$, where 
\begin{align*}
F'_i(x_i, y_i) 
:= F'_{i, 1}(|y_i-y'_i|) F'_{i, 2}(|x_i-y_i|) F'_{i, 3} \bigg(1 + \frac{|x_i+y_i|}{1+|x_i-y_i|}\bigg). 
\end{align*}
\end{list}
\end{lemma}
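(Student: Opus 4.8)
The plan is to prove Lemma~\ref{lem:improve} by a direct construction, treating the size estimate \eqref{size-imp} first and then the H\"older estimate \eqref{Holder-imp}, which is the more delicate of the two. For \eqref{size-imp}, I would start from the three factors $F_{i,1}(|x_i-y_i|)$, $F_{i,2}(|x_i-y_i|)$, $F_{i,3}(|x_i+y_i|)$ and replace each by a monotone majorant without destroying the limit conditions defining $\F$. For $F_{i,1}$, which must vanish as $t\to 0$, I would set $F'_{i,1}(t):=\sup_{0\le s\le t}F_{i,1}(s)$; this is monotone increasing, dominates $F_{i,1}$, is bounded, and still tends to $0$ as $t\to 0$ since $F_{i,1}$ does. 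Symmetrically, for $F_{i,2}$ and $F_{i,3}$, which vanish at $\infty$, I would set $F'_{i,2}(t):=\sup_{s\ge t}F_{i,2}(s)$ and similarly for $F'_{i,3}$: these are monotone decreasing, bounded, dominate the originals, and vanish at $\infty$. The only nontrivial point is the change of argument in the third factor: I must check that $F'_{i,3}\bigl(1+\tfrac{|x_i+y_i|}{1+|x_i-y_i|}\bigr)\ge F_{i,3}(|x_i+y_i|)$. Since $F'_{i,3}$ is decreasing, this amounts to showing $1+\tfrac{|x_i+y_i|}{1+|x_i-y_i|}\le$ something $\le|x_i+y_i|$ is \emph{not} what is needed; rather, I should first replace $F_{i,3}$ by a decreasing majorant $\widetilde F_{i,3}$ on $[0,\infty)$ and then observe that $\widetilde F_{i,3}$ being decreasing and the elementary inequality relating $1+\tfrac{|x_i+y_i|}{1+|x_i-y_i|}$ to $|x_i+y_i|$ (they are comparable up to a factor $\lesssim |x_i+y_i|$, and in particular the new argument is $\le 1+|x_i+y_i|$) force $\widetilde F_{i,3}(1+|x_i+y_i|)\le\widetilde F_{i,3}(\text{new arg})$ only if the new argument is \emph{smaller}. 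So the correct reading is: the new argument $1+\tfrac{|x_i+y_i|}{1+|x_i-y_i|}$ is always $\le 1+|x_i+y_i|$, hence a decreasing $\widetilde F_{i,3}$ applied to it is $\ge\widetilde F_{i,3}(1+|x_i+y_i|)\ge\widetilde F_{i,3}(|x_i+y_i|)\ge F_{i,3}(|x_i+y_i|)$, after absorbing the harmless shift $t\mapsto 1+t$ into the construction of $\widetilde F_{i,3}$. I would organize this as a short sequence of elementary monotonicity observations.

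For \eqref{Holder-imp}, the hypothesis is now that $F_{i,1}$ is already monotone increasing, and I want to move a small power of $|y_i-y'_i|/|x_i-y_i|$ out of the $|x_i-y_i|$-decay and into the $F$-factor, at the cost of shrinking the exponent from $\delta_i$ to some $\delta'_i\in(0,\delta_i)$. The key algebraic identity is
\begin{align*}
F_i(x_i,y_i)\,\frac{|y_i-y'_i|^{\delta_i}}{|x_i-y_i|^{n_i+\delta_i}}
= F_i(x_i,y_i)\Bigl(\frac{|y_i-y'_i|}{|x_i-y_i|}\Bigr)^{\delta_i-\delta'_i}\,\frac{|y_i-y'_i|^{\delta'_i}}{|x_i-y_i|^{n_i+\delta'_i}}.
\end{align*}
Under the constraint $|y_i-y'_i|\le|x_i-y_i|/2$, the ratio $r:=|y_i-y'_i|/|x_i-y_i|$ lies in $(0,1/2]$. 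I would then split according to the size of $|x_i-y_i|$, or rather, observe that $F_{i,1}(|x_i-y_i|)\,r^{\,\delta_i-\delta'_i}$ should become a new increasing function of $|y_i-y'_i|$. Writing $F_{i,1}(|x_i-y_i|)=F_{i,1}(|y_i-y'_i|/r)$ and using that $F_{i,1}$ is increasing, $F_{i,1}(|y_i-y'_i|/r)\le F_{i,1}(2|y_i-y'_i|/\,\cdot\,)$ is not directly bounded because $r$ can be arbitrarily small; the genuine gain comes from the factor $r^{\delta_i-\delta'_i}$, which kills the growth: the function $t\mapsto t^{\delta_i-\delta'_i}F_{i,1}(s/t)$ for fixed $s$ is bounded as $t\to 0$ only if $F_{i,1}$ grows slower than any power, which is \emph{not} assumed. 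Hence the right move is to \emph{not} fix $s$ but instead define $F'_{i,1}(u):=\sup_{0<r\le 1/2}r^{\delta_i-\delta'_i}F_{i,1}(u/r)$ and check (a) boundedness, (b) monotone increasing in $u$, (c) $\lim_{u\to 0}F'_{i,1}(u)=0$. Boundedness and the limit at $0$ require choosing $\delta'_i<\delta_i$ so that, combined with the fact that $F_{i,1}$ is bounded (say by $M$) and tends to $0$ at $0$, one gets $r^{\delta_i-\delta'_i}F_{i,1}(u/r)\le r^{\delta_i-\delta'_i}M$ for $r$ small and $\le \sup_{s\le 2u}F_{i,1}(s)$-type bound for $r$ near $1/2$ — this is exactly where the power of $r$ does the work, and where I expect the main obstacle to lie. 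I would make this rigorous by splitting the supremum over $r$ at a threshold depending on $u$.

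To finish, once $F'_{i,1}$ is constructed as an increasing, bounded function vanishing at $0$, I set $F'_{i,2},F'_{i,3}$ to be decreasing majorants of $F_{i,2},F_{i,3}$ exactly as in part \eqref{size-imp} (handling the argument change $|x_i+y_i|\mapsto 1+\tfrac{|x_i+y_i|}{1+|x_i-y_i|}$ by the same elementary inequality), and verify that $(F'_{i,1},F'_{i,2},F'_{i,3})\in\F$. Then
\begin{align*}
F_i(x_i,y_i)\,\frac{|y_i-y'_i|^{\delta_i}}{|x_i-y_i|^{n_i+\delta_i}}
\le F'_{i,1}(|y_i-y'_i|)\,F'_{i,2}(|x_i-y_i|)\,F'_{i,3}\Bigl(1+\tfrac{|x_i+y_i|}{1+|x_i-y_i|}\Bigr)\,\frac{|y_i-y'_i|^{\delta'_i}}{|x_i-y_i|^{n_i+\delta'_i}},
\end{align*}
which is the claimed bound. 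The whole argument is elementary real analysis — suprema producing monotone majorants, and a single scaling trick absorbing a power — so beyond the delicate boundedness check for $F'_{i,1}$ in \eqref{Holder-imp} there is nothing deep; I would present it compactly, citing \cite[Lemma~2.3]{CYY} for the parallel classical statement and noting that the proof there transfers verbatim since only one-dimensional monotonicity is used.
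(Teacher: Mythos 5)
Your construction is correct and is essentially the standard argument (the paper itself gives no proof of this lemma, deferring entirely to \cite[Lemma 2.3]{CYY}, whose proof proceeds exactly as you describe: monotone envelopes via suprema for parts of the triple, and $F'_{i,1}(u):=\sup_{0<r\le 1/2}r^{\delta_i-\delta'_i}F_{i,1}(u/r)$ for the H\"older part, with the limit at $0$ checked by splitting the supremum at a small threshold $r_0$ exactly as you indicate). One small slip in your write-up of part \eqref{size-imp}: the chain $\widetilde F_{i,3}(1+|x_i+y_i|)\ge\widetilde F_{i,3}(|x_i+y_i|)$ is backwards for a decreasing function; once the shift is absorbed, i.e.\ $\widetilde F_{i,3}(t):=\sup_{s\ge \max\{t-1,0\}}F_{i,3}(s)$, you should pass directly from $\widetilde F_{i,3}\bigl(1+\tfrac{|x_i+y_i|}{1+|x_i-y_i|}\bigr)\ge\widetilde F_{i,3}(1+|x_i+y_i|)=\sup_{s\ge|x_i+y_i|}F_{i,3}(s)\ge F_{i,3}(|x_i+y_i|)$ without that intermediate term. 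With that repaired, the argument is complete.
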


The following result is just a particular case of \cite[Lemma 2.12]{CYY}. 
 
\begin{lemma}\label{lem:diag}
Let $r>1$ and $1<s<2$. Assume that $F$ is monotone decreasing. Then for all intervals $I, J \subset \R$ with $\ell(I) = \ell(J)$, $I \cap J = \emptyset$, and $\d(I, J)=0$, 
\begin{align*}
\bigg(\fint_I \fint_J \frac{1}{|x - y|^s} \, dx \, dy  \bigg)^{\frac1s} 
\lesssim |I|^{-1}, 
\end{align*}
and 
\begin{align*}
\bigg(\fint_I \fint_J F(|x - y|)^r \, dx \, dy \bigg)^{\frac1r} 
\lesssim \widetilde{F}(\ell(I)),  
\end{align*}
where $\widetilde{F}(t) := \sum_{k=0}^{\infty} 2^{-k/r} F(2^{-k} t)$. 
\end{lemma}

Next, let us present a useful estimate, which will be used frequently below.  

\begin{lemma}\label{lem:IJD} 
Let $F$ be a monotone decreasing function such that $\lim\limits_{t \to \infty} F(t)=0$. Then for all dyadic intervals $I, J \subset \R$ with $\ell(I) = \ell(J)$ and $\rd(I, J) = 1$ and for any $t>0$, if we define 
\begin{align*}
\mathscr{I}_{\theta}^t(I, J) 
:= \fint_I \fint_J \bigg(t |x-y| + \frac{1}{t|x-y|}\bigg)^{-\theta} \frac{F(|x-y|)}{|x-y|} \, dx \, dy,  
\end{align*}
then the following hold: 
\begin{enumerate} 
\item\label{TT-1} If $\theta=0$ and $I \cap J = \emptyset$, then
\begin{align*}
\mathscr{I}_{\theta}^t(I, J) 
\lesssim |I|^{-1} \widetilde{F}(\ell(I)). 
\end{align*}

\item\label{TT-2} If $\theta \in (0, 1)$, then 
\begin{align*}
\mathscr{I}_{\theta}^t(I, J) 
\lesssim |I|^{-1} \big(t|I| + t^{-1} |I|^{-1} \big)^{-\theta} \widetilde{F}(\ell(I)),  
\end{align*}
where $\widetilde{F}(t) := \sum_{k=0}^{\infty} 2^{-k \eta} F(2^{-k} t)$ for some small number $\eta \in (0, 1)$. 
\end{enumerate} 
\end{lemma}

\begin{proof}
Since the proof of part \eqref{TT-1} is contained in that of part \eqref{TT-2}, we only show the latter. Let $\theta \in (0, 1)$,  $I, J \subset \R$ be dyadic intervals with $\ell(I) = \ell(J)$ and $\rd(I, J) = 1$. We begin with the case $I \cap J = \emptyset$. Note that $\d(I, J)=0$. 

If $t|I| < 1$, we take $r>1$ so that $r'(1-\theta)<2$, and use H\"{o}lder's inequality to obtain 
\begin{align*}
\mathscr{I}_{\theta}^t(I, J) 
&\le t^{\theta} \fint_I \fint_J \frac{F(|x-y|)}{|x-y|^{1-\theta}} dx \, dy
\\
&\le t^{\theta} \bigg(\fint_I \fint_J F(|x-y|)^r dx \, dy \bigg)^{\frac1r}
\bigg(\fint_I \fint_J \frac{dx \, dy}{|x-y|^{r'(1-\theta)}} \bigg)^{\frac{1}{r'}}
\\
&\lesssim t^{\theta} \widetilde{F}(\ell(I)) |I|^{-1+\theta}
\simeq |I|^{-1} \widetilde{F}(\ell(I)) (t|I| + t^{-1} |I|^{-1})^{-\theta}. 
\end{align*}
where Lemma \ref{lem:diag} was used in the second-to-last step. Similarly, if $t|I| \ge 1$, picking $r>1$ so that $r'(1+\theta)<2$, we apply H\"{o}lder's inequality to arrive at 
\begin{align*}
\mathscr{I}_{\theta}^t(I, J) 
&\le t^{-\theta} \fint_I \fint_J \frac{F(|x-y|)}{|x-y|^{1+\theta}} dx \, dy
\\
&\le t^{-\theta} \bigg(\fint_I \fint_J F(|x-y|)^r dx \, dy \bigg)^{\frac1r}
\bigg(\fint_I \fint_J \frac{dx \, dy}{|x-y|^{r'(1+\theta)}} \bigg)^{\frac{1}{r'}}
\\
&\lesssim t^{-\theta} \widetilde{F}(\ell(I)) |I|^{-1-\theta}
\simeq |I|^{-1} \widetilde{F}(\ell(I)) (t|I| + t^{-1} |I|^{-1})^{-\theta}. 
\end{align*}

Now assume $I \cap J \not=\emptyset$, which implies $I=J$. Let $I_t :=\{tx: x \in I\}$. By a change of variables, we are reduced to showing  
\begin{align}\label{IJD-1}
t \fint_{I_t} \fint_{I_t} \frac{F(t^{-1} |x-y|)}{|x-y|^{1-\theta}} dx \, dy 
\lesssim \widetilde{F}(|I|) |I|^{-1} (t|I| + t^{-1} |I|^{-1})^{-\theta}, 
\end{align}
since $\mathscr{I}^t_{\theta}(I, I)$ is controlled by the left hand side of \eqref{IJD-1}. Note that the inequality \eqref{IJD-1} is invariant under the translation and dilation. Hence, we may assume $I_t =[0, 1]$ and it suffices to prove 
\begin{align}\label{IJD-2}
\int_0^1 \int_0^1 \frac{F(s |x-y|)}{|x-y|^{1-\theta}} dx \, dy 
\lesssim \widetilde{F}(s), \quad \forall \, s>0. 
\end{align}
Indeed, letting $x \in (0, 1)$, it follows from the monotonicity of $F$ that 
\begin{align*}
\int_0^1 \frac{F(s |x-y|)}{|x-y|^{1-\theta}} \, dy 
&= \int_0^x \frac{F(s |x-y|)}{|x-y|^{1-\theta}} \, dy 
+ \int_x^1 \frac{F(s |x-y|)}{|x-y|^{1-\theta}} \, dy 
\\
&\le \int_0^{x-2^{-k_0+1}} + \sum_{k \ge k_0} \int_{x-2^{-k+1}}^{x-2^{-k}} 
+ \sum_{k \ge k_1} \int_{x+2^{-k}}^{x+2^{-k+1}} 
+ \int_{x+2^{-k_1+1}}^1
\\
&\lesssim \sum_{k=0}^{\infty} F(2^{-k} s) \big[2^{(-k+1) \theta} - 2^{-k \theta} \big]
\lesssim \sum_{k=0}^{\infty} 2^{-k \theta} F(2^{-k} s) 
=: \widetilde{F}(s),  
\end{align*}
where $k_0 := [\log_2 \frac{2}{x}] +1$ if $\log_2 \frac{2}{x} \not\in \N$, otherwise, $k_0 := \log_2 \frac{2}{x}$, $k_1 := [\log_2 \frac{2}{1-x}] +1$ if $\log_2 \frac{2}{1-x} \not\in \N$, otherwise, $k_0 := \log_2 \frac{2}{x}$, and the implicit constants are independent of $x$. This immediately implies \eqref{IJD-2}. It is trivial that $\widetilde{F}$ is bounded and decreasing, and $\lim\limits_{t \to \infty} \widetilde{F}(t)=0$. The proof is complete. 
\end{proof}

\subsection{Functions acting on cubes}

For each $i \in \{1, 2, 3\}$, let $F_i \in \F_0$ and $(F_{i, 1}, F_{i, 2}, F_{i, 3}) \in \mathscr{F}$ so that $F_{i, 1}$ is increasing, $F_{i, 2}$ and $F_{i, 3}$ are decreasing. Define  
\begin{align}
\label{def:FIK-1} F_i(I_i, J_i)
& := F_{i, 1}(\ell(I_i \vee J_i)) 
\widetilde{F}_{i, 2}(\ell(I_i \wedge J_i)) 
\widetilde{F}_{i, 3}(I_i \cup J_i), 
\\
\label{def:FIK-2} \widehat{F}_i(I_i, J_i)  
& := F_i(I_i, J_i) + \sum_{I'_i \in \ch(I_i)} F_i(I'_i), 
\end{align}
for all intervals $I_i, J_i \subset \R$, where 
\begin{align*}
I \vee J & := 
\begin{cases}
I, & \ell(I) \ge \ell(J), 
\\
J, & \ell(I) < \ell(J), 
\end{cases}
\qquad\quad \, \, 
I \wedge J := 
\begin{cases}
J, & \ell(I) \ge \ell(J), 
\\
I, & \ell(I) < \ell(J), 
\end{cases}
\\ 
\widetilde{F}_{i, 2}(t) 
& := \sum_{k=0}^{\infty} 2^{-k \eta} F_{i, 2}(2^{-k} t), 
\qquad
\widetilde{F}_{i, 3}(I_i)
:=\sum_{k=0}^{\infty} 2^{-k \eta} F_{i, 3}(\rd(2^k I_i, \I)). 
\end{align*}
The auxiliary parameter $\eta \in (0, 1)$ is harmless and small enough. 

The notation in \eqref{def:FIK-1}--\eqref{def:FIK-2} will be used throughout this paper. For any harmless constant $\lambda \in (0, \infty)$, we always suppress the presence of $\lambda$ in  $F_{i, j}(\lambda t)$ and $F_i(\lambda I_i)$ because they enjoy the same properties as $F_{i, j}(t)$ and $F_i(I_i)$ respectively.

\begin{lemma}\label{lem:FF}
For each $i=1, 2, 3$, let $I_i, J_i \subset \R$ be intervals. Then the following  hold: 
\begin{list}{\rm (\theenumi)}{\usecounter{enumi}\leftmargin=1.2cm \labelwidth=1cm \itemsep=0.2cm \topsep=0.2cm \renewcommand{\theenumi}{\alph{enumi}}} 

\item\label{list-FF01} If $I_i \subset J_i$, then $F_i(I_i, I_i) \le F_i(I_i, J_i)$ and $\max\{F_i(I_i, J_i), \widehat{F}_i(I_i, I_i) \} \le  \widehat{F}_i(I_i, J_i)$.

\item\label{list-FF02} $F_i(I_i, J_i) \simeq F_i(I_i, I_i) \simeq F_i(J_i, J_i)$ whenever $\d(I_i, J_i) \lesssim 1$ and $\ell(I_i) \simeq \ell(J_i)$.

\item\label{list-FF2} For any $\varepsilon>0$, there exists $N_0 \ge 1$ so that for all $N \ge N_0$, $I_i \in \D$, and $J_i \not\in \D(2N)$ with $\rd(I_i, J_i) \le 2$, there holds $F_i(I_i, J_i) < \varepsilon$ or $\ell(I_i)/\ell(J_i) \le N^{-\frac12}$ or $\ell(J_i)/\ell(I_i) \le N^{-\frac12}$.

\black
\item\label{list-FF3} For any $k_i \ge 0$, one has 
\begin{align*}
\lim_{N \to \infty} \sup_{\D} \sup_{K_i \not\in \D(N)} \mathbf{F}_i (K_i) = 0, 
\end{align*}
where 
\begin{align*}
\mathbf{F}_i(K_i)
:= F_{i, 1}(\ell(K_i)) \widetilde{F}_{i, 2}(2^{-k_i} \ell(K_i)) \widetilde{F}_{i, 3}(K_i) 
+ \sum_{I_i^{(k_i+1)} = K_i} F_i(I_i). 
\end{align*}
\end{list}
\end{lemma}

\begin{proof}
By the monotonicity of $F_{i, 3}$, we see that for all intervals $I \subset J$, 
\begin{align}\label{Fi31}
\rd(I, \I) \ge \rd(J, \I), \quad 
F_{i, 3}(\rd(I, \I)) \le F_{i, 3}(\rd(J, \I)), \quad\text{and}\quad 
\widetilde{F}_{i, 3}(I) \le \widetilde{F}_{i, 3}(J). 
\end{align} 
Hence, $F_i(I_i, I_i) \le F_i(I_i, J_i)$, which implies $\max\{F_i(I_i, J_i), \widehat{F}_i(I_i, I_i) \} \le  \widehat{F}_i(I_i, J_i)$. This shows part \eqref{list-FF01}.  

To show part \eqref{list-FF02}, let $\d(I_i, J_i) \lesssim 1$ and $\ell(I_i) \simeq \ell(J_i)$. Then there exists some constant $\lambda \ge 1$ such that $I_i \subset \lambda J_i$ and $J_i \subset \lambda I_i$. Note that for any interval $I \subset \R$, 
\begin{align}\label{Fi32}
\widetilde{F}_{i, 3}(\lambda I)
\le (2\lambda)^{\eta} \, \widetilde{F}_{i, 3}(I), \qquad\forall \lambda \ge 1. 
\end{align}
Then it follows from \eqref{Fi31} and \eqref{Fi32} that 
\begin{align*}
\widetilde{F}_{i, 3}(I_i \cup J_i) 
\le \widetilde{F}_{i, 3}(\lambda J_i) 
\lesssim \widetilde{F}_{i, 3}(J_i) 
\le \widetilde{F}_{i, 3}(\lambda I_i) 
\lesssim \widetilde{F}_{i, 3}(I_i) 
\le \widetilde{F}_{i, 3}(I_i \cup J_i), 
\end{align*} 
which together with part \eqref{list-FF01} implies 
\begin{align*}
& F_i(I_i, I_i) 
\le F_i(I_i, \lambda J_i) 
\lesssim F_i(I_i, J_i) 
\\
& \le F_{i, 1}(\lambda \ell(I_i)) 
\widetilde{F}_{i, 2}(\lambda^{-1} \ell(I_i)) 
\widetilde{F}_{i, 3}(I_i \cup J_i) 
\\
&\lesssim F_{i, 1}(\ell(I_i)) 
\widetilde{F}_{i, 2}(\ell(I_i)) 
\widetilde{F}_{i, 3}(I_i) 
= F_i(I_i, I_i). 
\end{align*} 
Similarly, $F_i(I_i, J_i) \simeq F_i(J_i, J_i)$. 

To prove part \eqref{list-FF2}, fix $I_i \in \D$ and $J_i \not\in \D(2N)$ with $\rd(I_i, J_i) \le 1$. We begin with the case $\ell(J_i) < 2^{-2N}$. If $\ell(I_i \vee J_i) \le 2^{-N}$, we have $F_i(I_i, J_i) \lesssim F_{i, 1} (2^{-N}) \to 0$ as $N \to \infty$. If $\ell(I_i \vee J_i) > 2^{-N}$, then $\ell(I_i) > 2^{-N}$, and hence, $\ell(J_i)/\ell(I_i) \le 2^{-N} \le N^{-\frac12}$ provided $N$ sufficiently large. 

We then treat the case $\ell(J_i)>2^{2N}$. If $\ell(I_i \wedge J_i) \ge 2^N$, we have $F_i(I_i, J_i) \lesssim \widetilde{F}_{i, 2}(2^N) \to 0$ as $N \to \infty$. If $\ell(I_i \wedge J_i) < 2^N$, then $\ell(I_i) < 2^N$, and hence, $\ell(I_i)/\ell(J_i) \le 2^{-N} \le N^{-\frac12}$ provided $N$ large enough.  

Now let $2^{-2N} \le \ell(J_i) \le 2^{2N}$ and $\rd(J_i, 2^{2N} \I) > 2N$, then $\d(J_i, \I) \ge \d(J_i, 2^{2N} \I) > 2N 2^{2N}$. Consider first that $\ell(I_i) > \ell(J_i)$. If $\ell(I_i) > N^{\frac12} \ell(J_i)$, then $\ell(J_i)/\ell(I_i) \le N^{-\frac12}$. If $\ell(I_i) \le N^{\frac12} \ell(J_i)$, then $I_i \subset 5 N^{\frac12} J_i$. Hence, for any $k \ge 0$,
\begin{align}\label{kcd-1}
\d(2^k \cdot 5 N^{\frac12} J_i, \I)
\ge \d(J_i, \I) - 2^k \cdot 5 N^{\frac12} \ell(J_i)/2
> 2N 2^{2N} - 2^k \cdot 5  N^{\frac12} \cdot 2^{2N}/2,
\end{align}
which in turn implies that
\begin{align}\label{kcd-2}
\rd(2^k(I_i \cup J_i), \I)
\ge \rd(2^k \cdot 5 N^{\frac12} J_i, \I)
\ge \frac{\d(2^k \cdot 5 N^{\frac12} J_i, \I)}{2^k \cdot 5 N^{\frac12} \cdot 2^{2N}}
>\frac25 2^{-k} N^{\frac12} - \frac12.
\end{align}
Let $N \ge 81$ and $k_N := [\log_2 N^{\frac14}]$. Thus, for all $0 \le k \le k_N$,
\begin{align}\label{kcd-3}
\rd(2^k(I_i \cup J_i), \I) 
\ge \frac25 N^{\frac14} - \frac12 
\ge \frac15 N^{\frac14}, 
\end{align}
which along with the monotonicity of $F_{i, 3}$ yields  
\begin{align}\label{kcd-4}
F_i(I_i, J_i)
\lesssim \sum_{k=0}^{k_N} 2^{-k \delta_i} F_{i, 3}(N^{\frac14}/5)
+ \sum_{k=k_N+1}^{\infty} 2^{-k \delta_i}
\lesssim F_{i, 3}(N^{\frac14}) + 2^{-k_N \delta_i} \to 0,
\end{align}
as $N \to \infty$. Furthermore, in the case $\ell(I_i) < \ell(J_i)$, there holds $I_i \subset 5J_i \subset 5N^{\frac12} J_i$. This means that \eqref{kcd-1}--\eqref{kcd-4} still hold in this scenario. Consequently, we conclude part \eqref{list-FF2}.

Finally, to obtain part \eqref{list-FF3}, let $\varepsilon>0$ be an arbitrary number. Fix an integer $k_i \ge 0$. Since $F_i \in \F_0$ and 
\begin{align*}
\lim_{t \to 0} F_{i, 1}(t)
= \lim_{t \to \infty} F_{i, 2}(t)
= \lim_{t \to \infty} F_{i, 3}(t)
= \lim_{N \to \infty} \sum_{k>N} 2^{-k \eta}
= 0,  
\end{align*}
there exists $N_0 \ge 1$ such that 
\begin{align}
\label{FT-1} & 0 \le F_{i, 1}(t) < \varepsilon, \quad\forall t < 2^{- N_0}, 
\\
\label{FT-2} & 0 \le F_{i, 2}(t) < \varepsilon, \quad\forall t > 2^{N_0}, 
\\
\label{FT-3} & 0 \le F_{i, 3}(t) < \varepsilon,  \quad\forall t > N_0, 
\\
\label{FT-4} & \sum_{k>N} 2^{-k \eta} < \varepsilon, \, \, \quad\forall N > N_0, 
\\
\label{FT-5} & F(I_i) < \varepsilon, \, \quad\forall I_i: \ell(I_i) < 2^{-N_0}, 
\\
\label{FT-6} & F(I_i) < \varepsilon, \, \quad\forall I_i: \ell(I_i) > 2^{N_0}, 
\\
\label{FT-7} & F(I_i) < \varepsilon, \, \quad\forall I_i: |c_{I_i}| > 2^{N_0} N_0. 
\end{align}
Let $N > 2^{N_0+2} (N_0 + k_i)$ and $K_i \not\in \D(N)$. If $\ell(K_i) < 2^{-N}$, then by \eqref{FT-1} and \eqref{FT-5}, 
\begin{align*}
\mathbf{F}_i(K_i) 
\lesssim F_{i, 1}(\ell(K_i)) + \sum_{I_i^{(k_i+1)} = K_i} F_i(I_i)
\lesssim \varepsilon.
\end{align*}
If $\ell(K_i) > 2^N$, then the monotonicity of $F_{i, 2}$, \eqref{FT-2}, \eqref{FT-4}, and \eqref{FT-6} imply 
\begin{align*}
\mathbf{F}_i(K_i) 
\lesssim \sum_{k \le N_0} 2^{-k \eta} F_{i, 2}(2^{-k_i - N_0 + N})  
+ \sum_{k > N_0} 2^{-k \eta} \|F_{i, 2}\|_{L^{\infty}} 
+ \sum_{I_i^{(k_i+1)} = K_i} F_i(I_i)
\lesssim \varepsilon, 
\end{align*}
provided that $\ell(I_i) = 2^{-k_i} \ell(K_i) \ge 2^{-k_i+N} > 2^{N_0}$. If $2^{-N} \le \ell(K_i) \le 2^N$ and $\rd(K_i, 2^N \I)>N$, then $|c_{K_i}| \ge 2^N N \ge 4N_0 2^{N_0+N} \ge 4N_0 \ell(2^{N_0} K_i)$, which implies that $2^{N_0} K_i \cap \I = \emptyset$ and 
\begin{align*}
\d(2^{N_0} K_i, \I)
&=|c_{2^{N_0} K_i}| - \ell(2^{N_0} K_i)/2 - \ell(\I)/2
\ge |c_{K_i}| - 2^{N_0} \ell(K_i) - \ell(\I)
\\
&\ge 2^N N  - 2^{N_0 + N} - 1 
\ge 2^{N_0 + N} N_0.
\end{align*}
The latter in turn gives that for every $k \le N_0$, 
\begin{align}\label{FT-8}
\rd(2^k K_i, \I)
\ge \rd(2^{N_0} K_i, \I)
= 1+ \frac{\d(2^{N_0} K_i, \I)}{\max\{2^{N_0} \ell(K_i), 1\}} 
> N_0. 
\end{align}
Since $F_3$ is monotone decreasing,  the estimates \eqref{FT-3}, \eqref{FT-4}, and \eqref{FT-7} yield 
\begin{align*}
\mathbf{F}_i(K_i) 
\lesssim \sum_{0 \le k \le N_0} 2^{-k \eta} F_{i, 3}(N_0)
+ \sum_{k > N_0} 2^{-k \eta} \|F_{i, 3}\|_{L^{\infty}}
+ \sum_{I_i^{(k_i+1)} = K_i} F_i(I_i)
\lesssim \varepsilon, 
\end{align*}
provided that $|c_{I_i}| \ge |c_{K_i}| - \ell(K_i)/2 \ge 2^N N - 2^N/2 \ge 2^N N/2 > 2^{N_0} N_0$. This completes the proof. 
\end{proof}

\subsection{Some integral estimates}
The following lemmas naturally arises from the proof of the compact dyadic representation in Section \ref{sec:cdr}. 

\begin{lemma}\label{lem:PP} 
For each $i \in \{1, 2, 3\}$, let $(F_{i, 1}, F_{i, 2}, F_{i, 3}) \in \mathscr{F}$ be such that $F_{i, 1}$ is increasing, $F_{i, 2}$ and $F_{i, 3}$ are decreasing. Let $I_i, J_i, K_i \subset \R$ be dyadic intervals satisfying $\ell(I_i) = \ell(J_i)$ and $I_i \cup J_i \subset K_i$, $i=1, 2, 3$. Then the following statements hold: 
\begin{enumerate}

\item[(i)] If $\rd(I_i, J_i) > 1$, then 
\begin{align}\label{def:PP-1}
\mathscr{P}_i(I_i, J_i)
:= \int_{I_i} \int_{J_i} \frac{F_i(x_i, y_i)}{|x_i - y_i|}  dx_i \, dy_i 
\lesssim \frac{F_i(I_i, K_i)}{\rd(I_i, J_i)} |I_i|,  
\end{align}
where 
\begin{align*}
F_i(x_i, y_i)
:= F_{i, 1}(|x_i - c_{J_i}|) F_{i, 2}(|x_i - y_i|) F_{i, 3}\bigg(1 + \frac{|x_i + y_i|}{1 + |x_i - y_i|}\bigg). 
\end{align*}

\item[(ii)] If $\rd(I_i, J_i) \le 1$ and $I_i \cap J_i = \emptyset$, then 
\begin{align}\label{def:PP-2}
\mathscr{P}_i(I_i, J_i)
:= \int_{I_i} \int_{J_i} \frac{F_i(x_i, y_i)}{|x_i - y_i|}  dx_i \, dy_i 
\lesssim F_i(I_i, I_i) |I_i|, 
\end{align}
where 
\begin{align*}
F_i(x_i, y_i)
&:= F_{i, 1}(|x_i - y_i|) F_{i, 2}(|x_i - y_i|) F_{i, 3}\bigg(1+\frac{|x_i+y_i|}{1+|x_i-y_i|}\bigg). 
\end{align*}

\item[(iii)] If $\rd(I_i, J_i) \le 1$, then for any $t>0$ and $\theta \in (0, 1)$, 
\begin{align}\label{def:QQ}
\mathscr{Q}_i^t(I_i, J_i) 
&:= \int_{I_i} \int_{J_i} \bigg(t |x_i - y_i| + \frac{1}{t|x_i - y_i|}\bigg)^{-\theta} 
\frac{F_i(x_i, y_i)}{|x_i - y_i|} \, dx_i \, dy_i 
\\ \nonumber
&\lesssim |I_i| \big(t |I_i| + t^{-1} |I_i|^{-1} \big)^{-\theta} F_i(I_i, I_i), 
\end{align}
where the implicit constant is independent of $t$, $I_i$, and $J_i$.
\end{enumerate}

\end{lemma}

\begin{proof}
Suppose that $\rd(I_i, J_i) > 1$. Then for all $x_i \in J_i$ and $y_i \in I_i$, it is easy to see that 
\begin{align}\label{eq:PP-1}
\ell(I_i) = \ell(J_i) \le \d(I_i, J_i)
\le |x_i - y_i| \le \ell(K_i), \qquad 
(x_i + y_i)/2 \in K_i, 
\end{align}
and
\begin{align*}
1 + \frac{|x_i + y_i|}{1 + |x_i - y_i|}
\ge 1 + \frac{2\d(K_i, \I)}{1+ \ell(K_i)}
\ge 1 + \frac{\d(K_i, \I)}{\max\{\ell(K_i), 1\}}
= \rd(K_i, \I), 
\end{align*}
which gives 
\begin{align}\label{eq:PP-2}
F_i(x_i, y_i)
\le F_{i, 1}(\ell(I_i)) F_{i, 2}(\ell(I_i)) F_{i, 3}(\rd(K_i, \I))
\le F_i(I_i, K_i).
\end{align}
Hence, it follows from \eqref{eq:PP-1} and \eqref{eq:PP-2} that 
\begin{align*}
\mathscr{P}_i(I_i, J_i)
\lesssim \frac{F_i(I_i, K_i)}{\d(I_i, J_i)} |I_i| |J_i| 
= \frac{F_i(I_i, K_i)}{\rd(I_i, J_i)} |I_i|,   
\end{align*}
which shows \eqref{def:PP-1}.

To prove \eqref{def:PP-2}, let $\rd(I_i, J_i) \le 1$, $x_i \in J_i$, and $y_i \in I_i$. Then, $J_i \in 3I_i$, $|x_i - y_i| \le 3 \ell(I_i)$, and
\begin{align*}
& 2 \bigg(1 + \frac{|x_i + y_i|}{1 + |x_i - y_i|} \bigg)
\ge 1 + \frac{|x_i + y_i| + |x_i - y_i|}{1 + |x_i - y_i|}
\\
&\ge 1 + \frac{2 |y_i|}{1 + 3 \ell(I_i)}
\ge 1 + \frac{\d(I_i, \I)}{2\max\{\ell(I_i), 1\}}
\ge \frac12 \rd(I_i, \I),
\end{align*}
which implies
\begin{align}\label{eq:PP-3}
F_i(x_i, y_i)
\le F_{i, 1}(\ell(I_i)) F_{i, 2}(|x_i - y_i|)  F_{i, 3}(\rd(I_i, \I)).
\end{align}
Thus, by Lemma \ref{lem:IJD} part \eqref{TT-1} and \eqref{eq:PP-3} 
\begin{align*}
\mathscr{P}_i(I_i, J_i)
&\lesssim F_{i, 1}(\ell(I_i)) F_{i, 3}(\rd(I_i, \I))
\int_{I_i} \int_{J_i} \frac{F_{i, 2}(|x_i - y_i|)}{|x_i - y_i|}  dx_i \, dy_i
\\
&\lesssim F_{i, 1}(\ell(I_i)) \widetilde{F}_{i, 2}(\ell(I_i)) F_{i, 3}(\rd(I_i, \I)) |I_i|
\le  F_i(I_i, I_i) |I_i|.
\end{align*}
Similarly, we use Lemma \ref{lem:IJD} part \eqref{TT-2} and \eqref{eq:PP-3} to conclude \eqref{def:QQ} as desired. 
\end{proof}

\begin{lemma}\label{lem:RR} 
Let $\theta \in (0, 1)$ and $t>0$. Let $i \in \{1, 2, 3\}$, $(F_{i, 1}, F_{i, 2}, F_{i, 3}) \in \mathscr{F}$ be such  that $F_{i, 1}$ is increasing, $F_{i, 2}$ and $F_{i, 3}$ are decreasing. Let $I_i, J_i \subset \R$ be dyadic intervals with $\ell(I_i) = \ell(J_i)$, $i=1, 2, 3$. Then the following hold:
\begin{enumerate} 
\item[(i)] If $\rd(I_1, J_1) \le 1$ and $\rd(I_2, J_2) \le 1$, then 
\begin{align}\label{def:R12}
\mathscr{R}_{1, 2}^t(I_{1, 2}, J_{1, 2}) 
&:= \int_{I_{1, 2}} \int_{J_{1, 2}} D_{\theta}(x_{1, 2}-y_{1, 2}, t)
\prod_{i = 1, 2} \frac{F_i(x_i, y_i)}{|x_i - y_i|} \, dx_{1, 2} \, dy_{1, 2}
\\ \nonumber
&\lesssim F_1(I_1, I_1) |I_1| \, F_2(I_2, I_2) |I_2| \, 
\bigg(\frac{|I_1| |I_2|}{t} + \frac{t}{|I_1| |I_2|} \bigg)^{-\theta}. 
\end{align}

\item[(ii)] If $\rd(I_1, J_1) \le 1$ and $\rd(I_3, J_3) \le 1$, then 
\begin{align}\label{def:R13}
\mathscr{R}_{1, 3}^t(I_{1, 3}, J_{1, 3}) 
&:= \int_{I_{1, 3}} \int_{J_{1, 3}} D_{\theta}(x_1 - y_1, t, x_3 - y_3)
\prod_{i = 1, 3} \frac{F_i(x_i, y_i)}{|x_i - y_i|} \, dx_{1, 3} \, dy_{1, 3}
\\ \nonumber
&\lesssim F_1(I_1, I_1) |I_1| \, F_3(I_3, I_3) |I_3| 
\bigg(\frac{t |I_1|}{|I_3|} + \frac{|I_3|}{t |I_1|} \bigg)^{-\theta}. 
\end{align}

\item[(iii)] If $\rd(I_2, J_2) \le 1$ and $\rd(I_3, J_3) \le 1$, then 
\begin{align}\label{def:R23}
\mathscr{R}_{2, 3}^t(I_{2, 3}, J_{2, 3}) 
&:= \int_{I_{2, 3}} \int_{J_{2, 3}} D_{\theta}(t, x_{2, 3}-y_{2, 3})
\prod_{i = 2, 3} \frac{F_i(x_i, y_i)}{|x_i - y_i|} \, dx_{2, 3} \, dy_{2, 3}
\\ \nonumber
&\lesssim F_2(I_2, I_2) |I_2| \, F_3(I_3, I_3) |I_3| 
\bigg(\frac{t |I_2|}{|I_3|} + \frac{|I_3|}{t |I_2|} \bigg)^{-\theta}.
\end{align}

\item[(iv)] If $\rd(I_i, J_i) \le 1$ for each $i=1, 2, 3$, then 
\begin{align}\label{def:RIJ}
\mathscr{R}(I, J)
:= \int_I \int_J D_{\theta}(x-y) 
\prod_{i=1}^3 \frac{F_i(x_i, y_i)}{|x_i - y_i|} \, dx \, dy
\lesssim \prod_{i=1}^3 F_i(I_i, I_i) |I_i|. 
\end{align}
\end{enumerate}
\end{lemma}

\begin{proof}
To show \eqref{def:R12}, fix $x_1 \in J_1$ and $y_1 \in I_1$. Writing $t_2 := \frac{|x_1-y_1|}{t}$ and using \eqref{def:QQ}, we obtain 
\begin{align*}
\mathscr{Q}_2^{t_2}(I_2, J_2) 
&\lesssim |I_2| (t_2 |I_2| + t_2^{-1} |I_2|^{-1})^{-\theta} F_2(I_2, I_2)
\\
&= \bigg(\frac{|I_2|}{t} |x_1 - y_1| + \frac{t}{|I_2|} \frac{1}{|x_1 - y_1|}\bigg)^{-\theta} 
F_2(I_2, I_2) |I_2|,  
\end{align*}
which together with \eqref{def:QQ} applied to $t_1 := \frac{|I_2|}{t}$ implies 
\begin{align*}
\mathscr{R}_{1, 2}^t(I_{1, 2}, J_{1, 2}) 
&=\int_{I_1} \int_{J_1} \mathscr{Q}_2^{t_2}(I_2, J_2) \frac{F_1(x_1, y_1)}{|x_1 - y_1|} dx_1 \, dy_1 
\\
&\lesssim \mathscr{Q}_1^{t_1}(I_1, J_1) F_2(I_2, I_2) |I_2| 
\\
&\lesssim F_1(I_1, I_1) |I_1| F_2(I_2, I_2) |I_2| 
\bigg(\frac{|I_1| |I_2|}{t} + \frac{t}{|I_1| |I_2|}\bigg)^{-\theta}.
\end{align*}
This justifies \eqref{def:R12}. Much as in the same way, one can prove \eqref{def:R13} and \eqref{def:R23}.  

Observe that if $t = |x_3 - y_3|$, then 
\begin{align*}
\mathscr{R}(I, J)
=\int_{I_3} \int_{J_3} \mathscr{R}_{1, 2}^t(I_{1, 2}, J_{1, 2}) \frac{F_3(x_3, y_3)}{|x_3-y_3|} \, dx_3 \, dy_3. 
\end{align*}
Setting $t_3=\frac{1}{|I_1| |I_2|}$, it follows from \eqref{def:R12} and \eqref{def:QQ} that 
\begin{align*}
\mathscr{R}(I, J)
&\lesssim \int_{I_3} \int_{J_3} \bigg[t_3 |x_3-y_3| + \frac{1}{t_3 |x_3-y_3|}\bigg]^{-\theta} 
\frac{F_3(x_3, y_3)}{|x_3-y_3|} \, dx_3 \, dy_3
\prod_{i=1}^2 F_i(I_i, I_i) |I_i| 
\\
&=\mathscr{Q}_3^{t_3}(I_3, J_3) \prod_{i=1}^2 F_i(I_i, I_i) |I_i|
\lesssim \prod_{i=1}^3 F_i(I_i, I_i) |I_i|, 
\end{align*}
which shows \eqref{def:RIJ}. 
\end{proof}

\section{A compact dyadic representation}\label{sec:cdr}
The goal of this section is to prove Theorem \ref{thm:repre}. To this end, we combine the ideas in \cite{CYY} and \cite{HLMV}.

\subsection{Dyadic reductions}  
Given $\w = (\w_1, \w_2, \w_3) \in (\{0, 1\}^{\ZZ})^3$ and $k = (k_1, k_2, k_3) \in \{2, 3, \ldots \}^3$, we define 
\begin{align*}
\D_{\w} & := \D_{\w_1} \times \D_{\w_2} \times \D_{\w_3}, \quad 
\D_{\w}^k := \D_{\w_1}^{k_1} \times \D_{\w_2}^{k_2} \times \D_{\w_3}^{k_3}, 
\\
\D_{\w, \Z} & := \big\{I=I_1 \times I_2 \times I_3 \in \D_{\w}: \ell(I_3) = \ell(I_1) \ell(I_2) \big\}, 
\\ 
\D_{\w, \Z}^k & := \big\{I=I_1 \times I_2 \times I_3 \in \D_{\w}^k: \ell(I_3) = \ell(I_1) \ell(I_2) \big\}.
\end{align*} 
It follows from \eqref{eq:fhh}, \eqref{def:Ek-1}, and \eqref{def:Ek-2} that 
\begin{align}\label{TDE-1}
\langle Tf, g \rangle 
& = \sum_{\substack{I_1, J_1 \in \D_1 \\ \ell(I_1) > \ell(J_1)}} 
+ \sum_{\substack{I_1, J_1 \in \D_1 \\ \ell(I_1) < \ell(J_1)}} 
+ \sum_{\substack{I_1, J_1 \in \D_1 \\ \ell(I_1) = \ell(J_1)}}
\langle T \Delta_{I_1} f, \Delta_{J_1} g \rangle
\\ \nonumber
& = \sum_{\substack{I_1, J_1 \in \D_1 \\ \ell(I_1) = \ell(J_1)}} 
\big[\langle T E_{I_1} f, \Delta_{J_1} g \rangle 
+ \langle T \Delta_{I_1} f, E_{J_1} g \rangle 
+ \langle T \Delta_{I_1} f, \Delta_{J_1} g \rangle \big]. 
\end{align}
Furthermore, by definition, 
\begin{align*}
\sum_{\substack{I_{2, 3} \in \D_{\lambda}^{2, 3} \\ \ell(I_2) > 2^k}} 
\Delta_{I_{2, 3}} f 
= \sum_{\substack{I_{2, 3} \in \D_{\lambda}^{2, 3} \\ \ell(I_2) = 2^k}} 
E_{I_{2, 3}} f,
\end{align*}
which gives  
\begin{align}\label{TDE-2}
\langle T f, g \rangle 
& = \sum_{\substack{I_{2, 3}, J_{2, 3} \in \D_{\ell(I_1)}^{2, 3} \\ \ell(I_2) > \ell(J_2)}}
+ \sum_{\substack{I_{2, 3}, J_{2, 3} \in \D_{\ell(I_1)}^{2, 3} \\ \ell(I_2) < \ell(J_2)}} 
+ \sum_{\substack{I_{2, 3}, J_{2, 3} \in \D_{\ell(I_1)}^{2, 3} \\ \ell(I_2) = \ell(J_2)}} 
\langle T \Delta_{I_{2, 3}} f, \Delta_{J_{2, 3}} g \rangle
\\ \nonumber
& = \sum_{\substack{I_{2, 3}, J_{2, 3} \in \D_{\ell(I_1)}^{2, 3} \\ \ell(I_2) = \ell(J_2)}}  
\big[ \langle T E_{I_{2, 3}} f, \Delta_{J_{2, 3}} g \rangle
+ \langle T \Delta_{I_{2, 3}} f, E_{J_{2, 3}} g \rangle 
+ \langle T \Delta_{I_{2, 3}} f, \Delta_{J_{2, 3}} g \rangle \big]. 
\end{align}
Then invoking \eqref{TDE-2} applied to resulting terms in \eqref{TDE-1}, we obtain 
\begin{align}\label{TS-1}
\langle Tf, g \rangle 
= \sum_{j=1}^9 \S_j(\w), 
\end{align}
where 
\begin{align*}
\S_1(\w) & := \sum_{\substack{I, J \in \D_{\w, \Z} \\ \ell(I) = \ell(J)}}  
\langle T E_{I_1} E_{I_{2, 3}} f, \Delta_{J_1} \Delta_{J_{2, 3}} g \rangle, \quad
\S_2(\w) := \sum_{\substack{I, J \in \D_{\w, \Z} \\ \ell(I) = \ell(J)}}  
\langle T \Delta_{I_1} \Delta_{I_{2, 3}} f, E_{J_1} E_{J_{2, 3}} g \rangle, 
\\
\S_3(\w) & := \sum_{\substack{I, J \in \D_{\w, \Z} \\ \ell(I) = \ell(J)}}  
\langle T E_{I_1} \Delta_{I_{2, 3}} f, \Delta_{J_1} E_{J_{2, 3}} g \rangle, \quad 
\S_4(\w) := \sum_{\substack{I, J \in \D_{\w, \Z} \\ \ell(I) = \ell(J)}}  
\langle T \Delta_{I_1} E_{I_{2, 3}} f, E_{J_1} \Delta_{J_{2, 3}} g \rangle,
\\
\S_5(\w) & := \sum_{\substack{I, J \in \D_{\w, \Z} \\ \ell(I) = \ell(J)}}  
\langle T E_{I_1} \Delta_{I_{2, 3}} f, \Delta_{J_1} \Delta_{J_{2, 3}} g \rangle, \quad 
\S_6(\w) := \sum_{\substack{I, J \in \D_{\w, \Z} \\ \ell(I) = \ell(J)}}  
\langle T \Delta_{I_1} \Delta_{I_{2, 3}} f, E_{J_1} \Delta_{J_{2, 3}} g \rangle, 
\\
\S_7(\w) & := \sum_{\substack{I, J \in \D_{\w, \Z} \\ \ell(I) = \ell(J)}}  
\langle T \Delta_{I_1} E_{I_{2, 3}} f, \Delta_{J_1} \Delta_{J_{2, 3}} g \rangle, \quad
\S_8(\w) := \sum_{\substack{I, J \in \D_{\w, \Z} \\ \ell(I) = \ell(J)}}  
\langle T \Delta_{I_1} \Delta_{I_{2, 3}} f, \Delta_{J_1} E_{J_{2, 3}} g \rangle, 
\\
\S_9(\w) & := \sum_{\substack{I, J \in \D_{\w, \Z} \\ \ell(I) = \ell(J)}}  
\langle T \Delta_{I_1} \Delta_{I_{2, 3}} f, \Delta_{J_1} \Delta_{J_{2, 3}} g \rangle.
\end{align*}
Then taking an expectation $\E_{\w}$ of \eqref{TS-1} gives 
\begin{align}\label{TS-2}
\langle Tf, g \rangle 
= \sum_{j=1}^9 \E_{\w} \S_j(\w). 
\end{align}
To show Theorem \ref{thm:repre}, it suffices to focus on $\S_1(\w)$: 
\begin{align*}
\S_1(\w) 
= \sum_{\substack{I, J \in \D_{\w, \Z} \\ \ell(I) = \ell(J)}}  
\langle T (h_{I_1}^0 \otimes h_{I_{2, 3}}^0) , h_{J_1} \otimes h_{J_{2, 3}} \rangle
\langle f, h_{I_1}^0 \otimes h_{I_{2, 3}}^0 \rangle 
\langle g, h_{J_1} \otimes h_{J_{2, 3}} \rangle, 
\end{align*}
because our general strategy and techniques are contained in the estimate for $\S_1(w)$. 

Using the decomposition
\begin{align*}
h_{I_1}^0 
& = (h_{I_1}^0 - h_{J_1}^0) + h_{J_1}^0 
=: H_{I_1, J_1} + h_{J_1}^0, 
\\ 
h_{I_{2, 3}}^0 
& = (h_{I_{2, 3}}^0 - h_{J_{2, 3}}^0) + h_{J_{2, 3}}^0 
=: H_{I_{2, 3}, J_{2, 3}} + h_{J_{2, 3}}^0, 
\end{align*}
we have 
\begin{align}\label{S6}
\S_1(\w) 
= \sum_{i=1}^4 \S_1^i(\w), 
\end{align}
where 
\begin{align*}
\S_1^1(\w) 
& := \sum_{\substack{I, J \in \D_{\w, \Z} \\ \ell(I) = \ell(J)}}  
\langle T (h_{I_1}^0 \otimes h_{I_{2, 3}}^0) , h_{J_1} \otimes h_{J_{2, 3}} \rangle
\langle f, H_{I_1, J_1} \otimes H_{I_{2, 3}, J_{2, 3}} \rangle 
\langle g, h_{J_1} \otimes h_{J_{2, 3}} \rangle, 
\\ 
\S_1^2(\w) 
& := \sum_{\substack{I, J \in \D_{\w, \Z} \\ \ell(I) = \ell(J)}}  
\langle T (h_{I_1}^0 \otimes h_{I_{2, 3}}^0) , h_{J_1} \otimes h_{J_{2, 3}} \rangle
\langle f, H_{I_1, J_1} \otimes h_{J_{2, 3}}^0 \rangle 
\langle g, h_{J_1} \otimes h_{J_{2, 3}} \rangle, 
\\
\S_1^3(\w) 
& := \sum_{\substack{I, J \in \D_{\w, \Z} \\ \ell(I) = \ell(J)}}  
\langle T (h_{I_1}^0 \otimes h_{I_{2, 3}}^0) , h_{J_1} \otimes h_{J_{2, 3}} \rangle
\langle f, h_{J_1}^0 \otimes H_{I_{2, 3}, J_{2, 3}} \rangle 
\langle g, h_{J_1} \otimes h_{J_{2, 3}} \rangle, 
\\
\S_1^4(\w) 
& := \sum_{\substack{I, J \in \D_{\w, \Z} \\ \ell(I) = \ell(J)}}  
\langle T (h_{I_1}^0 \otimes h_{I_{2, 3}}^0) , h_{J_1} \otimes h_{J_{2, 3}} \rangle
\langle f, h_{J_1}^0 \otimes h_{J_{2, 3}}^0 \rangle 
\langle g, h_{J_1} \otimes h_{J_{2, 3}} \rangle.
\end{align*}
The cancellation assumption (cf. Definition \ref{def:cancellation}) implies  
\begin{align*}
\langle T (\mathbf{1}_{I_1} \otimes 1) , h_{J_1} \otimes h_{J_{2, 3}} \rangle
= \sum_{J'_1 \in \ch(J_1)} 
\langle T (\mathbf{1}_{I_1} \otimes 1) , \mathbf{1}_{J'_1} \otimes h_{J_{2, 3}} \rangle 
\langle h_{J_1} \rangle_{J'_1} 
= 0, 
\end{align*}
and 
\begin{align*}
\langle T (1 \otimes \mathbf{1}_{I_{2, 3}}) , h_{J_1} \otimes h_{J_{2, 3}} \rangle
= \sum_{J'_{2, 3} \in \ch(J_{2, 3})} 
\langle T (1 \otimes \mathbf{1}_{I_{2, 3}}) , h_{J_1} \otimes \mathbf{1}_{J'_{2, 3}} \rangle 
\langle h_{J_{2, 3}} \rangle_{J'_{2, 3}} 
= 0. 
\end{align*}
Hence, 
\begin{align*}
\S_1^2(\w) 
&= \sum_{\substack{J \in \D_{\w, \Z}, \, I \in \D^1 \\ \ell(I_1) = \ell(J_1)}}  
\sum_{\substack{I_{2, 3} \in \D_{\ell(I_1)}^{2, 3} \\ \ell(I_{2, 3}) = \ell(J_{2, 3})}} |I_1|^{-\frac12} 
\langle T (\mathbf{1}_{I_1} \otimes \mathbf{1}_{I_{2, 3}}) , h_{J_1} \otimes h_{J_{2, 3}} \rangle
\langle f, H_{I_1, J_1} \otimes \frac{\mathbf{1}_{J_{2, 3}}}{|J_{2, 3}|} \rangle 
\langle g, h_{J_1} \otimes h_{J_{2, 3}} \rangle
\\ 
&= \sum_{\substack{J \in \D_{\w, \Z}, \, I \in \D^1 \\ \ell(I_1) = \ell(J_1)}} |I_1|^{-\frac12} 
\langle T (\mathbf{1}_{I_1} \otimes 1) , h_{J_1} \otimes h_{J_{2, 3}} \rangle
\langle f, H_{I_1, J_1} \otimes \frac{\mathbf{1}_{J_{2, 3}}}{|J_{2, 3}|} \rangle 
\langle g, h_{J_1} \otimes h_{J_{2, 3}} \rangle
= 0, 
\end{align*}

\begin{align*}
\S_1^3(\w) 
&= \sum_{\substack{J \in \D_{\w, \Z}, \, I_{2, 3} \in \D_{\ell(J_1)}^{2, 3} \\ \ell(I_{2, 3}) = \ell(J_{2, 3})}}
\sum_{\substack{I \in \D^1 \\ \ell(I_1) = \ell(J_1)}}  
\langle T (\mathbf{1}_{I_1} \otimes \mathbf{1}_{I_{2, 3}}) , h_{J_1} \otimes h_{J_{2, 3}} \rangle
\langle f, \frac{\mathbf{1}_{J_1}}{|J_1|} \otimes H_{I_{2, 3}, J_{2, 3}} \rangle 
\langle g, h_{J_1} \otimes h_{J_{2, 3}} \rangle
\\ 
&= \sum_{\substack{J \in \D_{\w, \Z}, \, I_{2, 3} \in \D_{\ell(J_1)}^{2, 3} \\ \ell(I_{2, 3}) = \ell(J_{2, 3})}}
\langle T (1 \otimes \mathbf{1}_{I_{2, 3}}) , h_{J_1} \otimes h_{J_{2, 3}} \rangle
\langle f, \frac{\mathbf{1}_{J_1}}{|J_1|} \otimes H_{I_{2, 3}, J_{2, 3}} \rangle 
\langle g, h_{J_1} \otimes h_{J_{2, 3}} \rangle
= 0, 
\end{align*}
and 
\begin{align*}
\S_1^4(\w) 
&= \sum_{J \in \D_{\w, \Z}} 
\sum_{\substack{I_{2, 3} \in \D_{\ell(J_1)}^{2, 3} \\ \ell(I_{2, 3}) = \ell(J_{2, 3})}}
\sum_{\substack{I \in \D^1 \\ \ell(I_1) = \ell(J_1)}}  
\langle T (\mathbf{1}_{I_1} \otimes \mathbf{1}_{I_{2, 3}}) , h_{J_1} \otimes h_{J_{2, 3}} \rangle
\langle f \rangle_J 
\langle g, h_{J_1} \otimes h_{J_{2, 3}} \rangle
\\ 
&= \sum_{J \in \D_{\w, \Z}} 
\sum_{\substack{I_{2, 3} \in \D_{\ell(J_1)}^{2, 3} \\ \ell(I_{2, 3}) = \ell(J_{2, 3})}}
\langle T (1 \otimes \mathbf{1}_{I_{2, 3}}) , h_{J_1} \otimes h_{J_{2, 3}} \rangle
\langle f \rangle_J 
\langle g, h_{J_1} \otimes h_{J_{2, 3}} \rangle
= 0. 
\end{align*}
These and \eqref{S6} yield 
\begin{align*}
\S_1(\w) = \S_1^1(\w) 
= \sum_{\substack{k=(k_1, k_2, k_3) \in \N^3 \\ k_1, k_2, k_3 \ge 2}}  
\sum_{\substack{I, J \in \D_{0, \Z}: \, \ell(J) = \ell(I) \\ 2^{k_i-3} < \rd(I_i, J_i) \le 2^{k_i-2}}} 
\Lambda(I + \w, J + \w),   
\end{align*}
where 
\begin{align*}
\Lambda(I, J) 
:= \langle T (h_{I_1}^0 \otimes h_{I_{2, 3}}^0) , h_{J_1} \otimes h_{J_{2, 3}} \rangle
\langle f, H_{I_1, J_1} \otimes H_{I_{2, 3}, J_{2, 3}} \rangle 
\langle g, h_{J_1} \otimes h_{J_{2, 3}} \rangle. 
\end{align*}
Recall that $\w = (\w_1, w_2, \w_3)$ with $\w_i = (\w_i^{j_i})_{j_i \in \ZZ}$, $i=1, 2, 3$. Observe that $\mathbf{1}_{\{I + \w \in \D_{\w, \Z}^k\}}(\w)$ depends on $\w_i^{j_i}$ for $\ell(I_i) \le 2^{-j_i} < \ell(I_i^{(k_i)})$, $i=1, 2, 3$; while $\Lambda(I + \w, J + \w)$ depends on $\w_i^{j_i}$ for $2^{-j_i} < \ell(I_i) = \ell(J_i)$, $i=1, 2, 3$. Thus, by independence and \eqref{pigd}, we arrive at 
\begin{align}\label{indep-1}
\E_{\w} \S_1(\w) 
& = \sum_{k_1, k_2, k_3 \ge 2}  
\sum_{\substack{I, J \in \D_{0, \Z}: \, \ell(J) = \ell(I) \\ 2^{k_i-3} < \rd(I_i, J_i) \le 2^{k_i-2}}} 
8 \, \E_{\w} \mathbf{1}_{\{I + \w \in \D_{\w, \Z}^k\}}(\w) \, 
\E_{\w} \Lambda(I + \w, J + \w)
\\ \nonumber
& = \sum_{k_1, k_2, k_3 \ge 2}  
\sum_{\substack{I, J \in \D_{0, \Z}: \, \ell(J) = \ell(I) \\ 2^{k_i-3} < \rd(I_i, J_i) \le 2^{k_i-2}}} 
8 \, \E_{\w} \mathbf{1}_{\{I + \w \in \D_{\w, \Z}^k\}}(\w) \Lambda(I + \w, J + \w)
\\ \nonumber
& = 8 \, \E_{\w} \sum_{k_1, k_2, k_3 \ge 2}  
\sum_{I \in \D_{\w, \Z}^k} 
\sum_{\substack{J \in \D_{\w, \Z}: \, \ell(J) = \ell(I) \\ 2^{k_i-3} < \rd(I_i, J_i) \le 2^{k_i-2}}} 
\Lambda(I, J). 
\end{align}
Given $k=(k_1, k_2, k_3) \in \N^3$ with $k_1, k_2, k_3 \ge 2$, by the $k$-goodness, we have
\begin{equation}\label{indep-2}
\begin{aligned}
&\text{for any $I \in \D_{\w, \Z}^k$ and $J \in \D_{\w, \Z}$ with $\ell(I) = \ell(J)$ and $\rd(I_i, J_i) \le 2^{k_i-2}$, $i=1, 2, 3$,}
\\
&\text{there must hold $J_i \subset I^{(k_i)}$ (hence, $I_i^{(k_i)} = J_i^{(k_i)}$), $i=1, 2, 3$.}
\end{aligned}
\end{equation}
This leads to 
\begin{align*}
\E_{\w} \S_1(\w)
& = 8 C_0 \, \E_{\w} \sum_{\substack{k = (k_1, k_2, k_3) \\ k_1, k_2, k_3 \ge 2}} 
\varphi(k) 
\sum_{K \in \D_{\lambda(k)}} 
\sum_{\substack{I, J \in \D_{\w, \Z} \\ I^{(k)} = J^{(k)} = K}} 
\\
&\qquad \times a_{IJK}
\langle f, H_{I_1, J_1} \otimes H_{I_{2, 3}, J_{2, 3}} \rangle 
\langle g, h_{J_1} \otimes h_{J_{2, 3}} \rangle, 
\end{align*}
where $\lambda(k) := 2^{-k_1 - k_2 + k_3}$ and 
\begin{align}\label{AAG}
a_{IJK}
:= \frac{\langle T (h_{I_1}^0 \otimes h_{I_{2, 3}}^0) , h_{J_1} \otimes h_{J_{2, 3}} \rangle}{C_0 \, \varphi(k)}
=: \frac{\G_{IJ}}{C_0 \, \varphi(k)}
\end{align}
if $I \in \D_{\w, \Z}^k$ and $J \in \D_{\w, \Z}$ with $\ell(I) = \ell(J)$ and $2^{k_i-3} < \rd(I_i, J_i) \le 2^{k_i-2}$ for each $i=1, 2, 3$, and $a_{IJK} = 0$ otherwise. 

It remains to show that there exists a uniform constant $C_0 \in (1, \infty)$ such that 
\begin{align}\label{aIJK-1}  
|a_{IJK}| 
\le \mathbf{F}(K) \frac{|I|^{\frac12} |J|^{\frac12}}{|K|}
\end{align}
with 
\begin{align}\label{aIJK-2}  
\mathbf{F}(K) \le 1 
\quad \text{and} \quad 
\lim_{N \to \infty} \sup_{\D} \sup_{K \not\in \D_{\lambda(k)}^N} \mathbf{F}(K) 
= 0.
\end{align}
We claim that for all $I, J \in \D_{\w, \Z}$ with $\ell(I) = \ell(J)$ and $I, J \subset K$, there holds
\begin{equation}\label{GGIJ}
|\G_{IJ}| \lesssim 
\begin{cases}
{\displaystyle \frac{1}{\rd(I_1, J_1)^{\delta_1}}
\prod_{i=1}^3 \frac{\widehat{F}_i(I_i, K_i)}{\rd(I_i, J_i)}, 
\quad \text{if $\rd(I_1, J_1) > 1$ and $I_{2, 3} = J_{2, 3}$}},
\vspace{0.2cm}
\\ 
{\displaystyle \frac{\min\{\rd(I_2, J_2), \rd(I_3, J_3)\}^{-\delta_{2, 3}}}{\rd(I_1, J_1)^{\delta_1} D_{\theta}(I, J)} \prod_{i=1}^3 \frac{\widehat{F}_i(I_i, K_i)}{\rd(I_i, J_i)}, \quad\text{otherwise}},
\end{cases}
\end{equation} 
where $\widehat{F}_i(I_i, K_i)$ is defined in \eqref{def:FIK-2}, and 
\begin{align*}
D_{\theta}(I, J) := \bigg[\frac{\rd(I_1, J_1) \rd(I_2, J_2)}{\rd(I_3, J_3)} 
+ \frac{\rd(I_3, J_3)}{\rd(I_1, J_1) \rd(I_2, J_2)} \bigg]^{\theta}. 
\end{align*}
Assume that \eqref{GGIJ}  holds momentarily. Note that 
\begin{equation}\label{KDN}
K \not\in \D_{\lambda(k)}^N 
\Longrightarrow 
\begin{cases}
K_1 \notin \D_{\w_1}(N) \, \, \, \text{ or } \, \, \,
K_2 \notin \D_{\w_2}(N) \, \, \, \text{ or }  
\\
2^{-2N} \lesssim \ell(K_3) \lesssim 2^{2N} \text{ and } \rd(K_3, 2^{2N} \I) > 2N,   
\end{cases}
\end{equation}
where the last condition states the fact $K_3 \notin \D_{\w_3}(N)$ in some sense. Then picking 
\begin{align}\label{BFK} 
\mathbf{F}(K) := \mathbf{F}_1(K_1) \, \mathbf{F}_2(K_2) \, \mathbf{F}_3(K_3), 
\end{align}
where 
\begin{align*}
\mathbf{F}_i(K_i)
:= F_{i, 1}(\ell(K_i)) \widetilde{F}_{i, 2}(2^{-k_i} \ell(K_i)) \widetilde{F}_{i, 3}(K_i) 
+ \sum_{I_i^{(k_i+1)} = K_i} F_i(I_i), 
\qquad i=1, 2, 3, 
\end{align*}
we conclude \eqref{aIJK-1} and \eqref{aIJK-2} from \eqref{AAG}, \eqref{KDN}, Lemma \ref{lem:FF}, and an elementary estimate 
\begin{align}\label{MKK}
- \min\{k_2, \, k_3\} \delta_{2, 3} - |k_3 - k_1 - k_2| \theta
\le - k_2 \min\{\delta_{2, 3}, \theta\} - \max\{k_3 - k_1 - k_2, \, 0\} \theta.
\end{align}

The rest of this section is devoted to showing \eqref{GGIJ}. In what follows, fix an arbitrary $\w = (\w_1, \w_2, \w_3)$ and let $I, J \in \D_{\w, \Z}$ with $\ell(I) = \ell(J)$ and $I, J \subset K$. The proof of \eqref{GGIJ} will be given by distinguishing the relative positions between $I_1$ and $J_2$ and between $I_{2, 3}$ and $J_{2, 3}$ respectively. In each case, we will obtain improved estimates than \eqref{GGIJ}. In addition, we may assume that $\theta \in (0, 1)$ because $D_1(x) \le D_{\theta}(x)$ for all $x \in \Rn$ and $\theta \in (0, 1)$.

\subsection{Refined functions}\label{sec:refined}
Before proving \eqref{GGIJ}, we would like to improve some properties of functions appearing in hypotheses in order to simplify our argument below.

\begin{list}{\rm (\theenumi)}{\usecounter{enumi}\leftmargin=1.2cm \labelwidth=1cm \itemsep=0.2cm \topsep=0.2cm \renewcommand{\theenumi}{P\arabic{enumi}}} 

\item\label{list:P1} Assume that for each $i=1, 2$ and $j=1, 2, 3$, the function $F_{i, j}$ in Definitions \ref{def:full} is the same as the ones in Definitions \ref{def:partial-1}--\ref{def:partial-2}. Indeed, given $(F_{i, 1}^1, F_{i, 2}^1, F_{i, 3}^1) \in \F$ in Definition \ref{def:full}, $(F_{i, 1}^2, F_{i, 2}^2, F_{i, 3}^2) \in \F$ in Definition \ref{def:partial-1}, and $(F_{i, 1}^3, F_{i, 2}^3, F_{i, 3}^3)  \in \F$ in Definition \ref{def:partial-2}, if we define 
\begin{align*}
F_{i, j} := \max\{F_{i, j}^1, F_{i, j}^2, F_{i, j}^3\}, \qquad j=1, 2, 3, 
\end{align*}
then the compact full and partial kernel representations hold for $(F_{i, 1}, F_{i, 2}, F_{i, 3}) \in \F$.

\item\label{list:P2} For each $i=1,2$ and $(F_{i, 1}, F_{i, 2}, F_{i, 3}) \in \F$ in Definitions \ref{def:full}--\ref{def:partial-2}, we may assume that $F_{i, 1}$ is monotonically increasing while $F_{i, 2}$ and $F_{i, 3}$ are monotone decreasing. Indeed, if we define 
\begin{align*}
F_{i, 1}^*(t) := \sup_{0 \le s \le t} F_{i, 1}(s), \quad 
F_{i, 2}^*(t) := \sup_{s \ge t} F_{i, 2}(s), \quad\text{and}\quad 
F_{i, 3}^*(t) := \sup_{s \ge t} F_{i, 3}(s),
\end{align*}
then $F^*_{i, 1}$ is monotone increasing, $F^*_{i, 2}$ and $F^*_{i, 3}$ are monotone decreasing, and the compact full and partial kernel representations hold for $(F_{i, 1}^*, F_{i, 2}^*, F_{i, 3}^*) \in \F$.

\item\label{list:P3} Since any dilation of functions in $\F$ and $\F_0$ still belong to the original space, we will often omit all universal constants appearing in those functions.  

\item\label{list:P4} In view of Lemma \ref{lem:improve}, we will use alternative estimates for kernels in Definitions \ref{def:full}--\ref{def:partial-2}. That is, in Definition \ref{def:full}, the size condition is replaced by 
\begin{align*}
|K(x, y)| 
\leq D_{\theta}(x-y) \prod_{i=1}^3 \frac{F_i(x_i, y_i)}{|x_i - y_i|},  
\end{align*}
where 
\begin{align*}
F_i(x_i, y_i) := F_{i, 1}(|x_i - y_i|) F_{i, 2}(|x_i - y_i|) F_{i, 3} \bigg(1+ \frac{|x_i + y_i|}{1+|x_i - y_i|} \bigg);  
\end{align*}
while the first H\"{o}lder condition is replaced by 
\begin{align*}
& |K(x, y) - K((x_1, x'_2, x'_3), y) - K((x'_1, x_2, x_3), y) + K(x', y)| 
\\
& \leq \bigg(\frac{|x_1 - x'_1|}{|x_1 - y_1|}\bigg)^{\delta_1} 
\bigg(\frac{|x_2 - x'_2|}{|x_2 - y_2|} + \frac{|x_3 - x'_3|}{|x_3 - y_3|}\bigg)^{\delta_{2, 3}}
D_{\theta}(x-y) \prod_{i=1}^3 \frac{F_i(x_i, y_i)}{|x_i - y_i|}
\end{align*}
whenever $|x_i - x'_i| \leq |x_i - y_i|/2$ for $i=1, 2, 3$, where 
\begin{align*}
F_i(x_i, y_i) := F_{i, 1}(|x_i - x'_i|) F_{i, 2}(|x_i - y_i|) F_{i, 3} \bigg(1+ \frac{|x_i + y_i|}{1+|x_i - y_i|} \bigg). 
\end{align*} 
Other conditions can be formulated in a similar way. 
\end{list}

\subsection{Separated/Separated}\label{sec:SS}
We begin with the case $\rd(I_1, J_1) > 1$ and $\rd(I_{2, 3}, J_{2, 3}) > 1$, which will be treated in three subcases below. 

\subsubsection{\bf Case 1: $\rd(I_i, J_i) > 1$, $i=1, 2, 3$} 
Let $x_i \in J_i$ and $y_i \in I_i$, $i=1, 2, 3$. Then,  
\begin{align}\label{DT-0}
\frac{|x_i - c_{J_i}|}{|x_i - y_i|} 
\le \frac{\ell(J_i)/2}{\d(I_i, J_i)} 
=\frac{1/2}{\rd(I_i, J_i)},
\end{align}
and
\begin{align}\label{DT-1}
|x_i - y_i| 
\le \d(I_i, J_i) + \ell(I_i) + \ell(J_i) 
\le 3\d(I_i, J_i) \le 3|x_i - y_i|, 
\end{align}
which implies  
\begin{align}\label{DT-2}
D_{\theta}(x-y) 
&= \Bigg[ \frac{\frac{|x_1 - y_1|}{\ell(I_1)} \frac{|x_2 - y_2|}{\ell(I_2)}}{\frac{|x_3 - y_3|}{\ell(I_3)}} 
+ \frac{\frac{|x_1 - y_1|}{\ell(I_1)} \frac{|x_2 - y_2|}{\ell(I_2)}}{\frac{|x_3 - y_3|}{\ell(I_3)}} \Bigg]^{-\theta}
\\ \nonumber
&\simeq \Bigg[ \frac{\frac{\d(I_1, J_1)}{\ell(I_1)} \frac{\d(I_2, J_2)}{\ell(I_2)}}{\frac{\d(I_3, J_3)}{\ell(I_3)}} 
+ \frac{\frac{\d(I_1, J_1)}{\ell(I_1)} \frac{\d(I_2, J_2)}{\ell(I_2)}}{\frac{\d(I_3, J_3)}{\ell(I_3)}} \Bigg]^{-\theta}
= D_{\theta}(I, J)^{-1}. 
\end{align}
Write
\begin{align*}
\mathbf{K}(x, y)
:= K(x, y) - K((x_1, c_{J_{2, 3}})) - K((c_{J_1}, x_{2, 3})) + K(c_J, y).
\end{align*}
In light of \eqref{DT-0} and \eqref{DT-2}, we use the compact full kernel representation, the cancellation of $h_{J_1}$ and $h_{J_{2, 3}}$, and the H\"{o}lder condition of $K$ to deduce that 
\begin{align*}
|\mathscr{G}_{I, J}| 
&=\bigg|\int_{I_1 \times I_{2, 3}} \int_{J_1 \times J_{2, 3}}
\mathbf{K}(x, y) h_{I_1}^0 \otimes h_{I_{2, 3}}^0(y) 
h_{J_1} \otimes h_{J_{2, 3}}(x) \, dx \, dy \bigg|
\\
&\lesssim \int_{I_1 \times I_{2, 3}} \int_{J_1 \times J_{2, 3}} 
\bigg(\frac{|x_1 - c_{J_1}|}{|x_1 - y_1|}\bigg)^{\delta_1} 
\bigg(\frac{|x_2 - c_{J_2}|}{|x_2 - y_2|} + \frac{|x_3 - c_{J_3}|}{|x_3 - y_3|}\bigg)^{\delta_{2, 3}}
\\
&\quad\times D_{\theta}(x-y) \prod_{i=1}^3 \frac{F_i(x_i, y_i)}{|x_i - y_i|} 
|h_{I_1}^0 \otimes h_{I_{2, 3}}^0(y)| |h_{J_1} \otimes h_{J_{2, 3}}(x)| \, dx \, dy
\\
&\lesssim \frac{1}{\rd(I_1, J_1)^{\delta_1}} 
\bigg[\frac{1}{\rd(I_2, J_2)} + \frac{1}{\rd(I_3, J_3)}\bigg]^{\delta_{2, 3}}
\frac{1}{D_{\theta}(I, J)} \prod_{i=1}^3 \mathscr{P}_i(I_i, J_i) |I_i|^{-1}
\\ 
&\lesssim  \frac{\min\{\rd(I_2, J_2), \rd(I_3, J_3)\}^{-\delta_{2, 3}}}{\rd(I_1, J_1)^{\delta_1} 
D_{\theta}(I, J)}
\prod_{i=1}^3 \frac{F_i(I_i, K_i)}{\rd(I_i, J_i)}, 
\end{align*}
where \eqref{def:PP-1} was used in the last inequality.
\qed

\subsubsection{\bf Case 2: $\rd(I_1, J_1) > 1$, $\rd(I_2, J_2) > 1$, and $\rd(I_3, J_3) = 1$} 
In this case, we invoke \eqref{DT-0}--\eqref{DT-1} to obtain that 
\begin{align}\label{HH21}
\frac{|x_1 - c_{J_1}|}{|x_1 - y_1|} \le \frac{1/2}{\rd(I_1, J_1)}
\quad \text{ and }\quad 
|x_i - y_i| \simeq \d(I_i, J_i), \quad i=1, 2, 
\end{align}
which gives 
\begin{align}\label{HH22}
D_{\theta}(x-y) 
&= \Bigg[ \frac{\frac{|x_1 - y_1|}{\ell(I_1)} \frac{|x_2 - y_2|}{\ell(I_2)}}{\frac{|x_3 - y_3|}{\ell(I_3)}} 
+ \frac{\frac{|x_3 - y_3|}{\ell(I_3)}}{\frac{|x_1 - y_1|}{\ell(I_1)} \frac{|x_2 - y_2|}{\ell(I_2)}} \Bigg]^{-\theta}
\\ \nonumber 
&\simeq \Bigg[ \frac{\frac{\d(I_1, J_1)}{\ell(I_1)} \frac{\d(I_2, J_2)}{\ell(I_2)}}{\frac{|x_3 - y_3|}{\ell(I_3)}} 
+ \frac{\frac{|x_3 - y_3|}{\ell(I_3)}}{\frac{\d(I_1, J_1)}{\ell(I_1)} \frac{\d(I_2, J_2)}{\ell(I_2)}} \Bigg]^{-\theta}
\\ \nonumber 
&= \bigg[ \frac{\rd(I_1, J_1) \rd(I_2, J_2)}{|x_3 - y_3|/\ell(I_3)} 
+ \frac{|x_3 - y_3|/\ell(I_3)}{\rd(I_1, J_1) \rd(I_2, J_2)}  \bigg]^{-\theta}. 
\end{align}
Denote 
\begin{align*}
D_{\theta}^{1, 2}(I, J)
:= \bigg[\rd(I_1, J_1) \rd(I_2, J_2)  + \frac{1}{\rd(I_1, J_1) \rd(I_2, J_2)} \bigg]^{\theta}. 
\end{align*} 
If we set 
\begin{align*}
t_3 := \frac{1}{\rd(I_1, J_1) \rd(I_2, J_2) \ell(I_3)}
\quad\text{ and }\quad 
\mathbf{K}(x, y) := K(x, y) - K((c_{J_1}, x_{2, 3}), y), 
\end{align*}
then by the compact full kernel representation, the cancellation of $h_{J_1}$, the mixed size-H\"{o}lder condition of $K$, and \eqref{HH21}--\eqref{HH22}, 
\begin{align}\label{HH23}
|\mathscr{G}_{I, J}| 
&=\bigg|\int_{I_1 \times I_{2, 3}} \int_{J_1 \times J_{2, 3}}
\mathbf{K}(x, y) h_{I_1}^0 \otimes h_{I_{2, 3}}^0(y) 
h_{J_1} \otimes h_{J_{2, 3}}(x) \, dx \, dy \bigg|
\\ \nonumber 
&\lesssim \int_{I_1 \times I_{2, 3}} \int_{J_1 \times J_{2, 3}} 
\bigg(\frac{|x_1 - c_{J_1}|}{|x_1 - y_1|}\bigg)^{\delta_1} D_{\theta}(x-y) 
\\ \nonumber 
&\quad\times \prod_{i=1}^3 \frac{F_i(x_i, y_i)}{|x_i - y_i|} 
|h_{I_1}^0 \otimes h_{I_{2, 3}}^0(y)| |h_{J_1} \otimes h_{J_{2, 3}}(x)| \, dx \, dy
\\ \nonumber 
&\lesssim \frac{1}{\rd(I_1, J_1)^{\delta_1}} \prod_{i=1}^2 \mathscr{P}_i(I_i, J_i) |I_i|^{-1} 
\times \mathscr{Q}_3^{t_3}(I_3, J_3) |I_3|^{-1} 
\\ \nonumber 
&\lesssim \frac{F_1(I_1, K_1)}{\rd(I_1, J_1)^{1+\delta_1}} 
\frac{F_2(I_2, K_2)}{\rd(I_2, J_2)} 
\frac{F_3(I_3, I_3)}{D_{\theta}^{1, 2}(I, J)}, 
\end{align}
where \eqref{def:PP-1} and \eqref{def:QQ} were used in the last step. 
\qed

\subsubsection{\bf Case 3: $\rd(I_1, J_1) > 1$, $\rd(I_2, J_2) = 1$, and $\rd(I_3, J_3) > 1$} 
The condition $\rd(I_1, J_1) > 1$ and $\rd(I_3, J_3) > 1$ implies 
\begin{align}\label{HH31}
\frac{|x_1 - c_{J_1}|}{|x_1 - y_1|} \le \frac{1/2}{\rd(I_1, J_1)}
\quad \text{ and }\quad 
|x_i - y_i| \simeq \d(I_i, J_i), \quad i=1, 3, 
\end{align}
which immediately yields  
\begin{align}\label{HH32}
D_{\theta}(x-y) 
&= \Bigg[ \frac{\frac{|x_1 - y_1|}{\ell(I_1)} \frac{|x_2 - y_2|}{\ell(I_2)}}{\frac{|x_3 - y_3|}{\ell(I_3)}} 
+ \frac{\frac{|x_3 - y_3|}{\ell(I_3)}}{\frac{|x_1 - y_1|}{\ell(I_1)} \frac{|x_2 - y_2|}{\ell(I_2)}} \Bigg]^{-\theta}
\\ \nonumber
&\simeq \Bigg[ \frac{\frac{\d(I_1, J_1)}{\ell(I_1)} \frac{|x_2 - y_2|}{\ell(I_2)}}{\frac{\d(I_3, J_3)}{\ell(I_3)}} 
+ \frac{\frac{\d(I_3, J_3)}{\ell(I_3)}}{\frac{\d(I_1, J_1)}{\ell(I_1)} \frac{|x_2 - y_2|}{\ell(I_2)}} \Bigg]^{-\theta}
\\ \nonumber
&= \bigg[ \frac{\rd(I_1, J_1)}{\rd(I_3, J_3)} \frac{|x_2 - y_2|}{\ell(I_2)} 
+ \frac{\rd(I_3, J_3)}{\rd(I_1, J_1)} \frac{\ell(I_2)}{|x_2 - y_2|} \bigg]^{-\theta}. 
\end{align}
Let 
\begin{align*}
t_2 := \frac{\rd(I_1, J_1)}{\ell(I_2) \rd(I_3, J_3)} 
\quad\text{ and }\quad 
\mathbf{K}(x, y) := K(x, y) - K((c_{J_1}, x_{2, 3}), y). 
\end{align*}
In view of \eqref{HH31}--\eqref{HH32}, similarly to \eqref{HH23}, we invoke \eqref{def:PP-1} and \eqref{def:QQ} to obtain 
\begin{align*}
|\mathscr{G}_{I, J}| 
&\lesssim \frac{1}{\rd(I_1, J_1)^{\delta_1}} \prod_{i=1, 3} \mathscr{P}_i(I_i, J_i) |I_i|^{-1} 
\times \mathscr{Q}_2^{t_2}(I_2, J_2) |I_2|^{-1}  
\\ 
&\lesssim \frac{F_1(I_1, K_1)}{\rd(I_1, J_1)^{1+\delta_1}} 
\frac{F_2(I_2, I_2)}{D_{\theta}^{1, 3}(I, J)}
\frac{F_3(I_3, K_3)}{\rd(I_3, J_3)},  
\end{align*}
where 
\begin{align*}
D_{\theta}^{1, 3}(I, J)
:= \bigg[ \frac{\rd(I_1, J_1)}{\rd(I_3, J_3)}  + \frac{\rd(I_3, J_3)}{\rd(I_1, J_1)} \bigg]^{\theta}. 
\end{align*}

\subsection{Separated/Adjacent}\label{sec:SA}
We are going to handle the case $\rd(I_1, J_1) > 1$, $\rd(I_{2, 3}, J_{2, 3}) = 1$, and $I_{2, 3} \cap J_{2, 3} = \emptyset$. As before, it is easy to check that for any $x_1 \in J_1$ and $I_1$, 
\begin{align}\label{HH4}
|x_1-y_1| \simeq \d(I_1, J_1) =: t \quad\text{and}\quad 
D_{\theta}(x-y) 
\simeq \bigg[\frac{t |x_2 - y_2|}{|x_3 - y_3|} 
+ \frac{|x_3 - y_3|}{t |x_2 - y_2|} \bigg]^{-\theta}. 
\end{align}
Writing 
\begin{align*}
\mathbf{K}(x, y) := K(x, y) - K((c_{J_1}, x_{2, 3}), y), 
\end{align*}
we apply the compact full kernel representation, the cancellation of $h_{J_1}$, the mixed size-H\"{o}lder condition of $K$, and \eqref{HH4} to arrive at 
\begin{align}\label{GHH4}
|\G_{IJ}| 
&=\bigg|\int_{I_1 \times I_{2, 3}} \int_{J_1 \times J_{2, 3}}
\mathbf{K}(x, y) h_{I_1}^0 \otimes h_{I_{2, 3}}^0(y) h_{J_1} \otimes h_{J_{2, 3}}(x) \, dx \, dy \bigg|
\\ \nonumber 
&\lesssim \int_{I_1 \times I_{2, 3}} \int_{J_1 \times J_{2, 3}} 
\bigg(\frac{|x_1 - c_{J_1}|}{|x_1 - y_1|}\bigg)^{\delta_1} D_{\theta}(x-y) 
\\ \nonumber
&\quad\times \prod_{i=1}^3 \frac{F_i(x_i, y_i)}{|x_i - y_i|} 
|h_{I_1}^0 \otimes h_{I_{2, 3}}^0(y)| |h_{J_1} \otimes h_{J_{2, 3}}(x)| \, dx \, dy
\\ \nonumber
&\lesssim \frac{1}{\rd(I_1, J_1)^{\delta_1}} \mathscr{P}_1(I_1, J_1) |I_1|^{-1} 
\mathscr{R}_{2, 3}^t(I_{2, 3}, J_{2, 3}) |I_{2, 3}|^{-1}
\\ \nonumber
&\lesssim \frac{F_1(I_1, K_1)}{\rd(I_1, J_1)^{1+\delta_1}} 
\bigg[ \rd(I_1, J_1) + \frac{1}{\rd(I_1, J_1)} \bigg]^{-\theta}
F_2(I_2, I_2) F_3(I_3, I_3), 
\end{align}
where \eqref{def:PP-1} and \eqref{def:R23} were used in the second-to-last inequality. 
\qed

\subsection{Separated/Identical}\label{sec:SI}
Let us deal with the case $\rd(I_1, J_1) > 1$ and $I_{2, 3} = J_{2, 3}$. We rewrite 
\begin{align}\label{HH50}
\G_{IJ}
= \sum_{I'_{2, 3}, \, I''_{2, 3} \in \ch(I_{2, 3})} |I_{2, 3}|^{-\frac12} 
\langle h_{I_{2, 3}} \rangle_{I''_{2, 3}}
\langle T(h_{I_1}^0 \otimes \mathbf{1}_{I'_{2, 3}}), 
h_{J_1} \otimes \mathbf{1}_{I''_{2, 3}} \rangle.
\end{align}
Let us first consider the case $I'_{2, 3}=I''_{2, 3}$. Denote 
\begin{align*}
\mathbf{K}(x_1, y_1) 
:= K_{I'_{2, 3}, I'_{2, 3}}(x_1, y_1) - K_{I'_{2, 3}, I'_{2, 3}}(c_{J_1}, y_1). 
\end{align*}
Then invoking the compact partial kernel representation, the cancellation of $h_{J_1}$, the H\"{o}lder condition of $K_{I'_{2, 3}, I'_{2, 3}}$, and \eqref{def:PP-1}, we obtain  
\begin{align}\label{HH51}
& |\langle T(h_{I_1}^0 \otimes \mathbf{1}_{I'_{2, 3}}), h_{J_1} \otimes \mathbf{1}_{I'_{2, 3}} \rangle| 
 = \bigg|\int_{I_1} \int_{J_1} \mathbf{K}(x_1, y_1) h_{I_1}^0(y_1) h_{J_1}(x_1) \, dx_1 \, dy_1 \bigg|
\\ \nonumber
& \lesssim \frac{\mathscr{P}_1(I_1, J_1)}{\rd(I_1, J_1)^{\delta_1}} |I_1|^{-1}
C(\mathbf{1}_{I'_{2, 3}}, \mathbf{1}_{I'_{2, 3}})
\lesssim \frac{F_1(I_1, K_1)}{\rd(I_1, J_1)^{1 + \delta_1}} F_2(I'_2) |I_2| F_3(I'_3) |I_3|. 
\end{align}
To handle the case $I'_{2, 3} \neq I''_{2, 3}$, observe that $\rd(I'_{2, 3}, I''_{2, 3}) = 1$ and $I'_{2, 3} \cap I''_{2, 3} = \emptyset$, which is similar to the situation of  Section \ref{sec:SA}. Hence, by \eqref{GHH4} and the fact $F_2(I'_2, I'_2) = F_2(I'_2, I'_2)$ and $F_3(I'_3, I'_3) = F_3(I_3, I_3)$, 
\begin{align}\label{HH52}
|\langle T(h_{I_1}^0 \otimes \mathbf{1}_{I'_{2, 3}}), h_{J_1} \otimes \mathbf{1}_{I''_{2, 3}} \rangle| 
\lesssim \frac{F_1(I_1, K_1)}{\rd(I_1, J_1)^{1 + \delta_1}} 
F_2(I_2, I_2) |I_2| \, F_3(I_3, I_3) |I_3|. 
\end{align}
Therefore, it follows from \eqref{HH50}--\eqref{HH52} that 
\begin{align*}
|\G_{IJ}|
\lesssim \frac{F_1(I_1, K_1)}{\rd(I_1, J_1)^{1 + \delta_1}} 
\widehat{F}_2(I_2, I_2) \widehat{F}_3(I_3, I_3). 
\end{align*}
\qed

\subsection{Adjacent/Separated}\label{sec:AS}
Let us turn to the case $\rd(I_1, J_1) = 1$, $I_1 \cap J_1 = \emptyset$, and $\rd(I_{2, 3}, J_{2, 3}) > 1$.

\subsubsection{\bf Case 1: $\rd(I_1, J_1) = 1$, $I_1 \cap J_1 = \emptyset$, $\rd(I_2, J_2) > 1$, and $\rd(I_3, J_3) > 1$}\label{sec:ASS} 
In the current scenario, for each $i=2, 3$ and for all $x_i \in J_i$ and $y_i \in I_i$, we have 
\begin{align}\label{HH61}
|x_i - y_i| \simeq \d(I_i, J_i), \qquad 
\frac{|x_i - c_{J_i}|}{|x_i - y_i|} \le \frac{1/2}{\rd(I_i, J_i)}, 
\end{align}
and 
\begin{align}\label{HH62}
D_{\theta}(x-y) 
&= \Bigg[\frac{\frac{|x_1 - y_1|}{\ell(I_1)} \frac{|x_2 - y_2|}{\ell(I_2)}}{\frac{|x_3 - y_3|}{\ell(I_3)}} 
+ \frac{\frac{|x_3 - y_3|}{\ell(I_3)}}{\frac{|x_1 - y_1|}{\ell(I_1)} \frac{|x_2 - y_2|}{\ell(I_2)}} \Bigg]^{-\theta}
\\ \nonumber
&\simeq \bigg[\frac{|x_1 - y_1|}{\ell(I_1)} \frac{\rd(I_2, J_2)}{\rd(I_3, J_3)}
+ \frac{\ell(I_1)}{|x_1 - y_1|} \frac{\rd(I_3, J_3)}{\rd(I_2, J_2)} \bigg]^{-\theta}. 
\end{align}
Write 
\begin{align*}
& \mathbf{K}(x, y) := K(x, y) - K((x_1, c_{J_2}, c_{J_3}), y), \quad 
t := \frac{\rd(I_2, J_2)}{\ell(I_1) \rd(I_3, J_3)}, 
\\
&\text{and } \quad 
D_{\theta}^{2, 3}(I, J) 
:= \bigg[\frac{\rd(I_2, J_2)}{\rd(I_3, J_3)} + \frac{\rd(I_3, J_3)}{\rd(I_2, J_2)}\bigg]^{\theta}.
\end{align*}
By \eqref{HH61}--\eqref{HH62}, the compact full kernel representation, the cancellation of $h_{J_{2, 3}}$, and the mixed size-H\"{o}lder condition of $K$, we conclude  
\begin{align}\label{GASS}
|\mathscr{G}_{I, J}|
&=\bigg|\int_{I_1 \times I_{2, 3}} \int_{J_1 \times J_{2, 3}}
\mathbf{K}(x, y) h_{I_1}^0 \otimes h_{I_{2, 3}}^0(y) 
h_{J_1} \otimes h_{J_{2, 3}}(x) \, dx \, dy \bigg|
\\ \nonumber 
&\lesssim \int_{I_1 \times I_{2, 3}} \int_{J_1 \times J_{2, 3}} 
\bigg(\frac{|x_2 - c_{J_2}|}{|x_2 - y_2|} 
+ \frac{|x_3 - c_{J_3}|}{|x_3 - y_3|}\bigg)^{\delta_{2, 3}} D_{\theta}(x-y) 
\\ \nonumber 
&\quad\times \prod_{i=1}^3 \frac{F_i(x_i, y_i)}{|x_i - y_i|} 
|h_{I_1}^0 \otimes h_{I_{2, 3}}^0(y)| |h_{J_1} \otimes h_{J_{2, 3}}(x)| \, dx \, dy
\\ \nonumber 
&\lesssim \bigg[\frac{1}{\rd(I_2, J_2)} + \frac{1}{\rd(I_3, J_3)}\bigg]^{\delta_{2, 3}} 
\mathscr{Q}_1^t(I_1, J_1) |I_1|^{-1} \prod_{i=2}^3 \mathscr{P}_i(I_i, J_i) |I_i|^{-1}
\\ \nonumber 
&\lesssim \frac{\min\{\rd(I_2, J_2), \, \rd(I_3, J_3)\}^{-\delta_{2, 3}}}{D_{\theta}^{2, 3}(I, J)} 
F_1(I_1, I_1) \prod_{i=2}^3 \frac{F_i(I_i, K_i)}{\rd(I_i, J_i)}, 
\end{align}
where we have used \eqref{def:PP-1} and \eqref{def:QQ} in the last inequality. 
\qed

\subsubsection{\bf Case 2: $\rd(I_1, J_1) = 1$, $I_1 \cap J_1 = \emptyset$, $\rd(I_2, J_2) > 1$, and $\rd(I_3, J_3) = 1$} \label{sec:ASE}
Note that for any $x_2 \in J_2$ and $y_2 \in I_2$, 
\begin{align}\label{HH7}
|x_2 - y_2| \simeq \d(I_2, J_2) =: t 
\quad\text{ and }\quad 
D_{\theta}(x-y) 
\simeq \bigg[\frac{t |x_1 - y_1|}{|x_3 - y_3|} 
+ \frac{|x_3 - y_3|}{t |x_1 - y_1|} \bigg]^{-\theta}. 
\end{align}
Then in view of the compact full kernel representation and the size condition of $K$, the estimate \eqref{HH7} yields 
\begin{align}\label{GASE}
|\mathscr{G}_{I, J}| 
&=\bigg|\int_{I_1 \times I_{2, 3}} \int_{J_1 \times J_{2, 3}}
K(x, y) h_{I_1}^0 \otimes h_{I_{2, 3}}^0(y) 
h_{J_1} \otimes h_{J_{2, 3}}(x) \, dx \, dy \bigg|
\\ \nonumber 
&\lesssim |I_1|^{-1} |I_2|^{-1} |I_3|^{-1} 
\int_{I_1 \times I_{2, 3}} \int_{J_1 \times J_{2, 3}} D_{\theta}(x-y) 
\prod_{i=1}^3 \frac{F_i(x_i, y_i)}{|x_i - y_i|} \, dx \, dy
\\ \nonumber 
&\lesssim  \mathscr{P}_2(I_2, J_2) |I_2|^{-1}
\mathscr{R}_{1, 3}^t(I_{1, 3}, J_{1, 3}) |I_1|^{-1} |I_3|^{-1} 
\\ \nonumber 
&\lesssim F_1(I_1, I_1) 
\frac{F_2(I_2, K_2)}{\rd(I_2, J_2)} 
\bigg[ \rd(I_2, J_2) + \frac{1}{\rd(I_2, J_2)} \bigg]^{-\theta}
F_3(I_3, I_3), 
\end{align}
provided \eqref{def:PP-1}, \eqref{def:R13}, and that $\ell(I_1) \d(I_2, J_2)/\ell(I_3) =\rd(I_2, J_2)^{-1}$.

\subsubsection{\bf Case 3: $\rd(I_1, J_1) = 1$, $I_1 \cap J_1 = \emptyset$, $\rd(I_2, J_2) = 1$, and $\rd(I_3, J_3) > 1$} \label{sec:AIS}
The condition $\rd(I_3, J_3) > 1$ implies that for all $x_3 \in J_3$ and $y_3 \in I_3$, 
\begin{align}\label{HH8}
|x_3 - y_3| \simeq \d(I_3, J_3) =: t 
\quad\text{ and }\quad
D_{\theta}(x-y) 
\simeq D_{\theta}(x_{1, 2} - y_{1, 2}, t) . 
\end{align}
Then it follows from the compact full kernel representation, the size condition of $K$, and \eqref{HH8} that 
\begin{align}\label{GAES}
|\mathscr{G}_{I, J}| 
&=\bigg|\int_{I_1 \times I_{2, 3}} \int_{J_1 \times J_{2, 3}}
K(x, y) h_{I_1}^0 \otimes h_{I_{2, 3}}^0(y) h_{J_1} \otimes h_{J_{2, 3}}(x) \, dx \, dy \bigg|
\\ \nonumber 
&\lesssim |I_1|^{-1} |I_2|^{-1} |I_3|^{-1} \int_{I_1 \times I_{2, 3}} \int_{J_1 \times J_{2, 3}} 
D_{\theta}(x-y) \prod_{i=1}^3 \frac{F_i(x_i, y_i)}{|x_i - y_i|} \, dx \, dy
\\ \nonumber 
&\lesssim \mathscr{R}_{1, 2}^t(I_{1, 2}, J_{1, 2}) |I_1|^{-1} |I_2|^{-1}
\mathscr{P}_3(I_3, J_3) |I_3|^{-1}
\\ \nonumber 
&\lesssim F_1(I_1, I_1) F_2(I_2, I_2) 
\frac{F_3(I_3, K_3)}{\rd(I_3, J_3)}
\bigg[ \rd(I_3, J_3) + \frac{1}{\rd(I_3, J_3)} \bigg]^{-\theta},  
\end{align}
provided \eqref{def:PP-1}, \eqref{def:R12}, and that $\ell(I_1) \ell(I_2)/\d(I_3, J_3) = \rd(I_3, J_3)^{-1}$.
\qed

\subsection{Adjacent/Adjacent}\label{sec:AA}
In the case $\rd(I_1, J_1) = 1$, $I_1 \cap J_1 = \emptyset$, $\rd(I_{2, 3}, J_{2, 3}) = 1$, and $I_{2, 3} \cap J_{2, 3} = \emptyset$, we apply the compact full kernel representation, the size condition of $K$, and \eqref{def:RIJ} to deduce  
\begin{align}\label{eq:HH-9}
|\mathscr{G}_{I, J}| 
\lesssim \mathscr{R}(I, J) \, |I_1|^{-1} |I_2|^{-1} |I_3|^{-1} 
\lesssim \prod_{i=1}^3 F_i(I_i, I_i). 
\end{align}

\subsection{Adjacent/Identical}\label{sec:AI}
In the case $\rd(I_1, J_1) = 1$, $I_1 \cap J_1 = \emptyset$, and $I_{2, 3} = J_{2, 3}$, we perform the decomposition
\begin{align}\label{HH100}
\mathscr{G}_{I, J}
= \sum_{I'_{2, 3}, I''_{2, 3} \in \ch(I_{2, 3})} 
|I_{2, 3}|^{-\frac12} \langle h_{I_{2, 3}} \rangle_{I''_{2, 3}}
\langle T(h_{I_1}^0 \otimes \mathbf{1}_{I'_{2, 3}}), h_{J_1} \otimes \mathbf{1}_{I''_{2, 3}} \rangle. 
\end{align}
If $I'_{2, 3}=I''_{2, 3}$, the the compact partial kernel representation, the size condition of $K_{\mathbf{1}_{I'_{2, 3}}, \mathbf{1}_{I'_{2, 3}}}$, and \eqref{def:PP-2} imply 
\begin{align}\label{HH101}
|\langle T(h_{I_1}^0 \otimes \mathbf{1}_{I'_{2, 3}}), h_{J_1} \otimes \mathbf{1}_{I''_{2, 3}} \rangle|
& \lesssim \mathscr{P}_1(I_1, J_1) |I_1|^{-1} 
C(\mathbf{1}_{I'_{2, 3}}, \mathbf{1}_{I'_{2, 3}}) 
\\ \nonumber
& \lesssim F_1(I_1, I_1) F_2(I'_2) F_3(I'_3) \, |I_2| |I_3|. 
\end{align}
If $I'_{2, 3} \neq I''_{2, 3}$, we see that $\rd(I_1, J_1) = 1$, $I_1 \cap J_1 = \emptyset$, $\rd(I'_{2, 3}, I''_{2, 3}) = 1$, and $I'_{2, 3} \neq I''_{2, 3} = \emptyset$. This is similar to the situation in Section \ref{sec:AA}, which gives 
\begin{align}\label{HH102}
|\langle T(h_{I_1}^0 \otimes \mathbf{1}_{I'_{2, 3}}), h_{J_1} \otimes \mathbf{1}_{I''_{2, 3}} \rangle|
&\lesssim F_1(I_1, I_1) F_2(I'_2, I'_2) F_3(I'_3, I'_3) \, |I_2| |I_3|. 
\end{align}
Thus, by \eqref{HH100}--\eqref{HH102} and the fact $F_i(I'_i, I'_i) \simeq F_2(I_i, I_i)$, $i=2, 3$, 
\begin{align}\label{HH10}
|\mathscr{G}_{I, J}| 
\lesssim F_1(I_1, I_1) \widehat{F}_2(I_2, I_2) \widehat{F}_3(I_3, I_3). 
\end{align}

\subsection{Identical/Separated}\label{sec:IS}
In the case $I_1 = J_1$ and $\rd(I_{2, 3}, J_{2, 3}) > 1$, we rewrite 
\begin{align}\label{GES}
\G_{IJ}
= \sum_{I'_1, \, I''_1 \in \ch(I_1)} |I_1|^{-\frac12} \langle h_{I_1} \rangle_{I''_1}
\langle T(\mathbf{1}_{I'_1} \otimes h_{I_{2, 3}}^0), \mathbf{1}_{I''_1} \otimes h_{J_{2, 3}} \rangle.
\end{align}

\subsubsection{\bf Case 1: $I_1 = J_1$, $\rd(I_2, J_2) > 1$, and $\rd(I_3, J_3) > 1$} 
If $I'_1 \neq I''_1$, then $\rd(I'_1, I''_1) = 1$, $I'_1 \cap I''_1 = \emptyset$, $\rd(I_2, J_2) > 1$, and $\rd(I_3, J_3) > 1$. This is similar to the case of Section \ref{sec:ASS}. Hence, by \eqref{GASS} and the fact that $F_1(I'_1, I'_1) \simeq F_1(I_1, I_1)$, 
\begin{align}\label{HH111}
|\langle T(\mathbf{1}_{I'_1} \otimes h_{I_{2, 3}}^0), \mathbf{1}_{I''_1} \otimes h_{J_{2, 3}} \rangle|
\lesssim \frac{\min\{\rd(I_2, J_2), \rd(I_3, J_3)\}^{-\delta_{2, 3}}}{D_{\theta}^{2, 3}(I, J)} 
F_1(I_1, I_1) \prod_{i=2}^3 \frac{F_i(I_i, K_i)}{\rd(I_i, J_i)}. 
\end{align}
To handle the case $I'_1=I''_1$, observe that for all $x_{2, 3} \in J_{2, 3}$ and $y_{2, 3} \in I_{2, 3}$, 
\begin{align}\label{HH112}
|x_i - y_i| \simeq \d(I_i, J_i), \qquad 
\frac{|x_i - c_{J_i}|}{|x_i - y_i|} \le \frac{1/2}{\rd(I_i, J_i)}, \quad i=2, 3, 
\end{align}
and 
\begin{align}\label{HH113}
D_{\theta}(\ell(I_1), x_{2, 3} - y_{2, 3}) 
\simeq \bigg[\frac{\rd(I_2, J_2)}{\rd(I_3, J_3)} 
+ \frac{\rd(I_3, J_3)}{\rd(I_2, J_2)} \bigg]^{-\theta}
=D_{\theta}^{2, 3}(I, J)^{-1}. 
\end{align}
If we let 
\begin{align*}
\mathbf{K}(x_{2, 3}, y_{2, 3})
:= K_{\mathbf{1}_{I'_1}, \mathbf{1}_{I'_1}}(x_{2, 3}, y_{2, 3}) 
- K_{\mathbf{1}_{I'_1}, \mathbf{1}_{I'_1}}(c_{J_{2, 3}}, y_{2, 3}), 
\end{align*}
then the compact partial kernel representation, the cancellation of $h_{J_{2, 3}}$,  the H\"{o}lder condition of $K_{\mathbf{1}_{I'_1}, \mathbf{1}_{I'_1}}$, and \eqref{HH112}--\eqref{HH113} imply 
\begin{align}\label{HH114}
& |\langle T(\mathbf{1}_{I'_1} \otimes h_{I_{2, 3}}^0), \mathbf{1}_{I'_1} \otimes h_{J_{2, 3}} \rangle|
\\ \nonumber
& = \bigg|\int_{I_{2, 3}} \int_{J_{2, 3}} \mathbf{K}(x_{2, 3}, y_{2, 3}) 
h_{I_{2, 3}}^0(y_{2, 3}) h_{J_{2, 3}}(x_{2, 3}) \, dx_{2, 3} \, dy_{2, 3}\bigg|
\\ \nonumber
& \lesssim C(\mathbf{1}_{I'_1}, \mathbf{1}_{I'_1}) |I_2|^{-1} |I_3|^{-1} \int_{I_{2, 3}} \int_{J_{2, 3}} 
\bigg(\frac{|y_2 - c_{I_2}|}{|x_2 - y_2|} + \frac{|y_3 - c_{I_3}|}{|x_3 - y_3|}\bigg)^{\delta_{2, 3}} 
\\ \nonumber
& \quad \times D_{\theta}(\ell(I_1), x_{2, 3} - y_{2, 3}) 
\prod_{i=2}^3 \frac{F_i(x_i, y_i)}{|x_i - y_i|} \, dx_{2, 3} \, dy_{2, 3} 
\\ \nonumber
&\lesssim F_1(I'_1) |I'_1| \, \bigg(\frac{1}{\rd(I_2, J_2)} + \frac{1}{\rd(I_3, J_3)}\bigg)^{\delta_{2, 3}} 
\frac{1}{D_{\theta}^{2, 3}(I, J)} \prod_{i=2}^3 \mathscr{P}_i(I_i, J_i) |I_i|^{-1} 
\\ \nonumber
&\lesssim F_1(I'_1) |I_1| 
\frac{\min\{\rd(I_2, J_2), \rd(I_3, J_3)\}^{-\delta_{2, 3}}}{D_{\theta}^{2, 3}(I, J)} 
\prod_{i=2}^3 \frac{F_i(I_i, K_i)}{\rd(I_i, J_i)}, 
\end{align}
where \eqref{def:PP-1} was used in the last inequality. Consequently, it follows from \eqref{GES}, \eqref{HH111}, and \eqref{HH114} that 
\begin{align}\label{HH11}
|\G_{IJ}|
\lesssim \frac{\min\{\rd(I_2, J_2), \rd(I_3, J_3)\}^{-\delta_{2, 3}}}{D_{\theta}^{2, 3}(I, J)}  
\widehat{F}_1(I_1, I_1) \prod_{i=2}^3 \frac{F_i(I_i, K_i)}{\rd(I_i, J_i)}. 
\end{align}

\subsubsection{\bf Case 2: $I_1 = J_1$, $\rd(I_2, J_2) > 1$, and $\rd(I_3, J_3) = 1$} 
If $I'_1 \neq I''_1$, then $\rd(I'_1, I''_1) = 1$, $I'_1 \cap I''_1 = \emptyset$, $\rd(I_2, J_2) > 1$, and $\rd(I_3, J_3) = 1$, which is similar to the case of Section \ref{sec:ASE}. Thus, \eqref{GASE} and the fact $F_1(I'_1, I'_1) \simeq F_1(I_1, I_1)$ give 
\begin{align}\label{HH121}
& |\langle T(\mathbf{1}_{I'_1} \otimes h_{I_{2, 3}}^0), \mathbf{1}_{I''_1} \otimes h_{J_{2, 3}} \rangle|
\\ \nonumber 
&\lesssim F_1(I_1, I_1) |I_1| 
\frac{F_2(I_2, K_2)}{\rd(I_2, J_2)} 
\bigg[ \rd(I_2, J_2) + \frac{1}{\rd(I_2, J_2)} \bigg]^{-\theta}
F_3(I_3, I_3).
\end{align}
To proceed, note that for all $x_2 \in J_2$ and $y_2 \in I_2$, 
\begin{align}\label{HH122}
D_{\theta}(\ell(I_1), x_{2, 3} - y_{2, 3}) 
\simeq \bigg[t|x_3 - y_3| + \frac{1}{t|x_3 - y_3|}\bigg]^{-\theta}, 
\end{align}
where $t := \frac{1}{\ell(I_1) \d(I_2, J_2)}$. Then, together with \eqref{HH122}, the compact partial kernel representation and the size condition of $K_{\mathbf{1}_{I'_1}, \mathbf{1}_{I'_1}}$ imply 
\begin{align}\label{HH123}
& |\langle T(\mathbf{1}_{I'_1} \otimes h_{I_{2, 3}}^0), \mathbf{1}_{I'_1} \otimes h_{J_{2, 3}} \rangle|
\\ \nonumber
&\lesssim C(\mathbf{1}_{I'_1}, \mathbf{1}_{I'_1}) |I_{2, 3}|^{-1} 
\int_{I_{2, 3}} \int_{J_{2, 3}} D_{\theta}(\ell(I_1), x_{2, 3} - y_{2, 3}) 
\prod_{i=2}^3 \frac{F_i(x_i, y_i)}{|x_i - y_i|} \, dx_{2, 3} \, dy_{2, 3} 
\\ \nonumber
&\lesssim F_1(I'_1) |I'_1| \, \mathscr{P}_2(I_2, J_2) |I_2|^{-1} \mathscr{Q}_3^t(I_3, J_3) |I_3|^{-1} 
\\ \nonumber
&\lesssim F_1(I'_1) |I_1| \, \frac{F_2(I_2, K_2)}{\rd(I_2, J_2)}   
\bigg[ \rd(I_2, J_2) + \frac{1}{\rd(I_2, J_2)} \bigg]^{-\theta}
F_3(I_3, I_3),
\end{align}
where \eqref{def:PP-1}, \eqref{def:QQ}, and $\frac{\ell(I_3)}{\ell(I_1) \d(I_2, J_2)} = \frac{1}{\rd(I_2, J_2)}$ were used in the last step. Hence, a consequence of \eqref{GES}, \eqref{HH121}, and \eqref{HH123} is that 
\begin{align}\label{HH12}
|\G_{IJ}|
\lesssim \widehat{F}_1(I_1, I_1)
\frac{F_2(I_2, K_2)}{\rd(I_2, J_2)}
\bigg[ \rd(I_2, J_2) + \frac{1}{\rd(I_2, J_2)} \bigg]^{-\theta}
F_3(I_3, I_3). 
\end{align}

\subsubsection{\bf Case 3: $I_1 = J_1$, $\rd(I_2, J_2) = 1$, and $\rd(I_3, J_3) > 1$} 
If $I'_1 \neq I''_1$, then $\rd(I'_1, I''_1) = 1$, $I'_1 \cap I''_1 = \emptyset$, $\rd(I_2, J_2) = 1$, and $\rd(I_3, J_3) > 1$. This is similar to the case of Section \ref{sec:AIS}, which along with \eqref{GAES} and the fact $F_1(I'_1, I'_1) \simeq F_1(I_1, I_1)$ yields    
\begin{align}\label{HH131}
& |\langle T(\mathbf{1}_{I'_1} \otimes h_{I_{2, 3}}^0), \mathbf{1}_{I''_1} \otimes h_{J_{2, 3}} \rangle|
\\ \nonumber 
&\lesssim F_1(I_1, I_1) |I_1| \, F_2(I_2, I_2)
\frac{F_3(I_3, K_3)}{\rd(I_3, J_3)}
\bigg[ \rd(I_3, J_3) + \frac{1}{\rd(I_3, J_3)} \bigg]^{-\theta}. 
\end{align}
Set $t := \frac{\ell(I_1)}{\d(I_3, J_3)}$. Analogously to \eqref{HH122} and \eqref{HH123}, we have 
\begin{align*}
D_{\theta}(\ell(I_1), x_{2, 3} - y_{2, 3}) 
\simeq \bigg(t|x_2 - y_2| + \frac{1}{t|x_2 - y_2|}\bigg)^{-\theta}, 
\qquad x_2 \in J_2, \, y_2 \in I_2, 
\end{align*}
and 
\begin{align}\label{HH132}
|\langle T(\mathbf{1}_{I'_1} \otimes h_{I_{2, 3}}^0), \mathbf{1}_{I'_1} \otimes h_{J_{2, 3}} \rangle|
\lesssim F_1(I'_1) |I'_1| \mathscr{Q}_2^t(I_2, J_2) |I_2|^{-1} \, \mathscr{P}_3(I_3, J_3) |I_3|^{-1}
\\ \nonumber
\lesssim F_1(I'_1) |I_1| \, F_2(I_2, I_2) \frac{F_3(I_3, I_3)}{\rd(I_3, K_3)} 
\bigg[ \rd(I_3, J_3) + \frac{1}{\rd(I_3, J_3)} \bigg]^{-\theta}, 
\end{align}
provided \eqref{def:PP-1}, \eqref{def:QQ}, and that $\frac{\ell(I_1) \ell(I_2)}{\d(I_3, J_3)} = \frac{1}{\rd(I_3, J_3)}$. Accordingly, \eqref{HH131} and \eqref{HH132} lead to 
\begin{align}\label{HH13}
|\G_{IJ}|
\lesssim \widehat{F}_1(I_1, I_1) F_2(I_2, I_2)
\frac{F_3(I_3, I_3)}{\rd(I_3, K_3)}
\bigg[ \rd(I_3, J_3) + \frac{1}{\rd(I_3, J_3)} \bigg]^{-\theta}. 
\end{align}

\subsection{Identical/Adjacent}\label{sec:IA}
In the case $I_1 = J_1$, $\rd(I_{2, 3}, J_{2, 3}) = 1$, and $I_{2, 3} \cap J_{2, 3} = \emptyset$, write 
\begin{align}\label{HH141}
\G_{IJ}
= \sum_{I'_1, I''_1 \in \ch(I_1)} 
|I_1|^{-\frac12} \langle h_{I_1} \rangle_{I''_1}
\langle T(\mathbf{1}_{I'_1} \otimes h_{I_{2, 3}}^0), \mathbf{1}_{I''_1} \otimes h_{J_{2, 3}} \rangle.
\end{align}
If $I'_1 \neq I''_1$, then $\rd(I'_1, I''_1) = 1$, $I'_1 \cap I''_1 =\emptyset$, $\rd(I_{2, 3}, J_{2, 3}) = 1$, and $I_{2, 3} \cap J_{2, 3} = \emptyset$. This is similar to the case in Section \ref{sec:AA}. Thus, there holds 
\begin{align}\label{HH142}
|\langle T(\mathbf{1}_{I'_1} \otimes h_{I_{2, 3}}^0), \mathbf{1}_{I''_1} \otimes h_{J_{2, 3}} \rangle|
&\lesssim F_1(I'_1, I'_1) |I'_1| \, F_2(I_2, I_2) F_3(I_3, I_3)
\\ \nonumber
&\lesssim F_1(I_1, I_1) |I_1| \, F_2(I_2, I_2) F_3(I_3, I_3). 
\end{align}
If $I'_1 = I''_1$, then the compact partial kernel representation, the size condition of $K_{\mathbf{1}_{I'_1}, \mathbf{1}_{I'_1}}$, and \eqref{def:R23} yield 
\begin{align}\label{HH143}
|\langle T(\mathbf{1}_{I'_1} \otimes h_{I_{2, 3}}^0), \mathbf{1}_{I''_1} \otimes h_{J_{2, 3}} \rangle|
& \lesssim C(\mathbf{1}_{I'_1}, \mathbf{1}_{I'_1}) 
\mathscr{R}_{2, 3}^{\ell(I_1)}(I_{2, 3}, J_{2, 3}) |I_2|^{-1} |I_3|^{-1} 
\\ \nonumber
&\lesssim F_1(I'_1) |I_1| \, F_2(I_2, I_2) F_3(I_3, I_3), 
\end{align}
Thus, from \eqref{HH141}--\eqref{HH143}, we obtain 
\begin{align}\label{HH14}
|\G_{IJ}|
\lesssim \widehat{F}_1(I_1, I_1) F_2(I_2, I_2) F_3(I_3, I_3). 
\end{align}

\subsection{Identical/Identical}\label{sec:II}
In the case $I_1=J_1$ and $I_{2, 3} = J_{2, 3}$, we rewrite 
\begin{align}\label{HH151}
\G_{IJ}
&= \sum_{\substack{I'_1, I''_1 \in \ch(I_1) \\ I'_{2, 3}, \, I''_{2, 3} \in \ch(I_{2, 3})}}
|I_1|^{-\frac12} |I_{2, 3}|^{-\frac12} 
\langle h_{I_1} \rangle_{I''_1}
\langle h_{I_{2, 3}} \rangle_{I''_{2, 3}}
\langle T(\mathbf{1}_{I'_1} \otimes \mathbf{1}_{I'_{2, 3}}), 
\mathbf{1}_{I''_1} \otimes \mathbf{1}_{I''_{2, 3}} \rangle.
\end{align}
If $I'_1=I''_1$ and $I'_{2, 3}=I''_{2, 3}$, the weak compactness property gives 
\begin{align}\label{HH152}
|\langle T(\mathbf{1}_{I'_1} \otimes \mathbf{1}_{I'_{2, 3}}), 
\mathbf{1}_{I'_1} \otimes \mathbf{1}_{I'_{2, 3}} \rangle|
\lesssim F_1(I'_1) |I_1| \, F_2(I'_2) |I_2| \, F_3(I'_3) |I_3|.  
\end{align}
If $I'_1=I''_1$ and $I'_{2, 3} \neq I''_{2, 3}$, we use the compact partial kernel representation, the size condition of $K_{\mathbf{1}_{I'_1}, \mathbf{1}_{I'_1}}$, and \eqref{def:R23} to arrive at
\begin{align}\label{HH153}
|\langle T(\mathbf{1}_{I'_1} \otimes \mathbf{1}_{I'_{2, 3}}),
\mathbf{1}_{I'_1} \otimes \mathbf{1}_{I''_{2, 3}} \rangle| 
& \lesssim C(\mathbf{1}_{I'_1},  \mathbf{1}_{I'_1}) 
\mathscr{R}_{2, 3}^{\ell(I'_1)}(I'_{2, 3}, I''_{2, 3})
\\ \nonumber
& \lesssim F_1(I'_1) |I_1| \, F_2(I'_2, I'_2) |I_2| F_3(I'_3, I'_3) |I_3|. 
\end{align}
Similarly, if $I'_1 \neq I''_1$ and $I'_{2, 3}=I''_{2, 3}$, it follows from \eqref{def:PP-2} that  
\begin{align}\label{HH154}
|\langle T (\mathbf{1}_{I'_1} \otimes \mathbf{1}_{I'_{2, 3}}),
\mathbf{1}_{I''_1} \otimes \mathbf{1}_{I'_{2, 3}} \rangle|
& \lesssim C(\mathbf{1}_{I'_{2, 3}}, \mathbf{1}_{I'_{2, 3}}) \mathscr{P}_1(I'_1, I''_1) 
\\ \nonumber
&\lesssim F_1(I'_1, I'_1) |I_1| \, F_2(I'_2) |I_2| \, F_3(I'_3) |I_3|.
\end{align}
If $I'_1 \neq I''_1$ and $I'_{2, 3} \neq I''_{2, 3}$, then in light of \eqref{def:RIJ}, the compact full kernel representation and the size condition of $K$ imply
\begin{align}\label{HH155}
|\langle T (\mathbf{1}_{I'_1} \otimes \mathbf{1}_{I'_{2, 3}}),
\mathbf{1}_{I''_1} \otimes \mathbf{1}_{I''_{2, 3}} \rangle|
\lesssim \mathscr{R}(I', J') 
\lesssim \prod_{i=1}^3 F_i(I'_i, I'_i) |I_i|. 
\end{align}
Therefore, by \eqref{HH151}--\eqref{HH155} and that $F_i(I'_i, I'_i) \simeq F_i(I_i, I_i)$, $i=1, 2, 3$, there holds 
\begin{align}\label{HH15}
|\G_{IJ}|
\lesssim \widehat{F}_1(I_1, I_1) \widehat{F}_2(I_2, I_2) \widehat{F}_3(I_3, I_3). 
\end{align}
The proof is complete. 
\qed

\section{Weighted compactness with Zygmund dilations}\label{sec:Zd}
We would like to demonstrate Theorem \ref{thm:cpt}, which is based on Theorems \ref{thm:SD-cpt} and \ref{thm:RdF-cpt}.

\subsection{Kolmogorov--Riesz theorems}
Let us present some compactness criterions. Define $\tau_v f(x) := f(x-v)$ for all $x, v \in \R^3$. Given $r>0$ and $x \in \R^3$, let 
\begin{align*}
B(x, r) & := \{y \in \R^3: |y-x| = |y_1 - x_1| + |y_2 - x_2| + |y_3 - x_3| < r\}, 
\\ 
R(x, r) & := \{y \in \R^3: |y_1 - x_1| < r, \, |y_2 - x_2| < r, \, |y_3 - x_3| < r^2\}.
\end{align*} 

\begin{theorem}\label{thm:KRLp}
Let $p \in (0, \infty)$ and $\mathcal{K} \subset L^p(\R^3)$. Then $\mathcal{K}$ is precompact in $L^p(\R^3)$ if and only if the following are satisfied:
\begin{list}{\rm (\theenumi)}{\usecounter{enumi}\leftmargin=1.2cm \labelwidth=1cm \itemsep=0.2cm \topsep=0.2cm \renewcommand{\theenumi}{\alph{enumi}}}
 
\item[{\rm (a)}] ${\displaystyle \sup_{f \in \mathcal{K}} \|f\|_{L^p} < \infty}$, 

\item[{\rm (b)}] ${\displaystyle \lim_{A \to \infty} \sup_{f \in \mathcal{K}} 
\|f \mathbf{1}_{B(0, A)^c}\|_{L^p}=0}$, 

\item[{\rm (c)}] $\displaystyle \lim_{|v| \to 0} \sup_{f \in \mathcal{K}} 
\|\tau_v f - f \|_{L^p}=0$. 
\end{list}  
\end{theorem}

\begin{proof}
The proof is almost the same as that of \cite[Theorem 1.3]{CLSY}. 
\end{proof}

\begin{theorem}\label{thm:KRB}
Let $p \in (1, \infty)$, $w \in A_{p, \Z}$,  and $\mathcal{K} \subset L^p(w)$. Then $\mathcal{K}$ is precompact in $L^p(w)$ if and only if the following are satisfied:
\begin{list}{\rm (\theenumi)}{\usecounter{enumi}\leftmargin=1.2cm \labelwidth=1cm \itemsep=0.2cm \topsep=0.2cm \renewcommand{\theenumi}{\alph{enumi}}}
			
\item[\textup{(a)\phantom{$'$}}] ${\displaystyle \sup_{f \in \mathcal{K}} \|f\|_{L^p(w)} < \infty}$, 

\item[\textup{(b)\phantom{$'$}}] ${\displaystyle \lim_{A \to \infty} \sup_{f \in \mathcal{K}} 
\|f \mathbf{1}_{R(0, A)^c}\|_{L^p(w)}=0}$, 

\item[\textup{(c)$'$}] ${\displaystyle \lim_{r \to 0} \sup_{f \in \mathcal{K}} 
\|f - \langle f \rangle_{R(\cdot, r)}\|_{L^p(w)}=0}$. 
\end{list}
\end{theorem}

\begin{proof}
The proof is similar to that of \cite[Theorem 4.1]{CLSY}. We omit the details. 
\end{proof}

\subsection{Interpolation of compactness}
We are going to utilize Theorem \ref{thm:KRB} to establish an interpolation of compactness as follows.

\begin{theorem}\label{thm:inter}
Let $\mathcal{B} \in \{\mathcal{R}, \mathcal{Z}\}$. Let $p_i \in (1, \infty)$ and $w_i \in A_{p_i, \mathcal{B}}$, $i=0, 1$. Assume that $T$ is a linear operator such that $T$ is bounded on $L^{p_0}(w_0)$ and compact on $L^{p_1}(w_1)$. Then $T$ is compact on $L^p(w)$, where 
\begin{align}\label{ITpp}
\frac1p = \frac{1-\eta}{p_0} + \frac{\eta}{p_1}, \quad 
w^{\frac1p} = w_0^{\frac{1-\eta}{p_0}}  w_1^{\frac{\eta}{p_1}}, 
\quad\text{and}\quad 
0<\eta<1.  
\end{align}
\end{theorem}

\begin{proof}
It suffices to show the case $\mathcal{B} = \mathcal{Z}$, since the proof in the case $\mathcal{B} = \mathcal{R}$ is similar. By the boundedness of $T$ on $L^{p_0}(w_0)$, there exists a constant $C_0 \in (1, \infty)$ so that
\begin{align}
\label{IT-1}
& \sup_{\|f\|_{L^{p_0}(w_0)} \le 1} \|Tf\|_{L^{p_0}(w_0)} \le C_0, 
\\
\label{IT-2}
& \sup_{\|f\|_{L^{p_0}(w_0)} \le 1} \|Tf \, \mathbf{1}_{R(0, A)^c}\|_{L^{p_0}(w_0)} \le C_0, 
\end{align}
for all $A>0$. In addition, the inequality \eqref{MB} gives 
\begin{align*}
\|Tf - \langle Tf \rangle_{R(\cdot, r)}\|_{L^{p_0}(w_0)} 
& \le \|Tf\|_{L^{p_0}(w_0)} + \|M_{\Z} (Tf)\|_{L^{p_0}(w_0)} 
\\
& \lesssim \|Tf\|_{L^{p_0}(w_0)} 
\lesssim \|f\|_{L^{p_0}(w_0)}, 
\end{align*}
for all $r>0$. Thus, there exists a constant $C_1 \in (1, \infty)$ such that 
\begin{align}\label{IT-3}
& \sup_{\|f\|_{L^{p_0}(w_0)} \le 1} \|Tf - \langle Tf \rangle_{R(\cdot, r)}\|_{L^{p_0}(w_0)} 
\le C_1, \quad\forall r>0. 
\end{align}

Let $\varepsilon>0$ be an arbitrary number. Since $T$ is compact on $L^{p_1}(w_1)$, Theorem \ref{thm:KRB} implies that there exist $A_0 = A_0(\varepsilon) > 0$ and $\rho_0 = \rho_0(\varepsilon) > 0$ such that for all $A \ge A_0$ and $0<\rho<\rho_0$, 
\begin{align}
\label{KAM-1}
\sup_{\|f\|_{L^{p_1}(w_1)} \le 1} 
& \|Tf\|_{L^{p_1}(w_1)} \le C_2, 
\\
\label{KAM-2}
\sup_{\|f\|_{L^{p_1}(w_1)} \le 1} 
& \|Tf \, \mathbf{1}_{R(0, A)^c}\|_{L^{p_1}(w_1)} \le \varepsilon, 
\\
\label{KAM-3}
\sup_{\|f\|_{L^{p_1}(w_1)} \le 1} 
& \|Tf - \langle Tf \rangle_{R(\cdot, r)}\|_{L^{p_1}(w_1)} \le \varepsilon,  
\end{align}
where $C_2>0$ is an absolute constant. 

Hence, in view of \eqref{ITpp}, \eqref{IT-1}, and \eqref{KAM-1}, we use the interpolation theorem of Stein-Weiss (cf. \cite{SW}) to obtain 
\begin{align}\label{KAM-111}
\sup_{\|f\|_{L^p(w)} \le 1} \|T f\|_{L^p(w)} 
\le C_0^{1-\eta} C_2^{\eta}.  
\end{align}
Likewise, \eqref{IT-2} and \eqref{KAM-2} lead to 
\begin{align}\label{KAM-222}
\sup_{\|f\|_{L^p(w)} \le 1} 
& \|Tf \, \mathbf{1}_{R(0, A)^c}\|_{L^p(w)} 
\le C_0^{1-\eta} \varepsilon^{\eta}, \quad \forall A \ge A_0.
\end{align}
Besides, \eqref{IT-3} and \eqref{KAM-3} imply 
\begin{align}\label{KAM-333}
\sup_{\|f\|_{L^p(w)} \le 1} 
& \|Tf - \langle Tf \rangle_{R(\cdot, r)}\|_{L^p(w)} 
\le C_1^{1-\eta} \varepsilon^{\eta}, \quad \forall \rho \in (0, \rho_0). 
\end{align} 
Therefore, from \eqref{KAM-111}--\eqref{KAM-333} and Theorem \ref{thm:KRB}, we conclude that $T$ is compact on $L^p(w)$. 
\end{proof}

\subsection{Extrapolation of compactness}
We only present the proof of Theorem \ref{thm:RdF-cpt} in the case $\mathcal{B} = \mathcal{Z}$. Fix $p \in (1, \infty)$, $w \in A_{p, \Z}$, $p_0 \in [1, \infty)$, and $w_0 \in A_{p_0, \Z}$. We claim that there exist $\eta \in (0, 1)$, $p_1 = p_1(\eta) \in (1, \infty)$, and $w_1 = w_1(\eta) \in A_{p_1, \Z}$ such that 
\begin{align}\label{wpp}
\frac1p = \frac{1 - \eta}{p_0} + \frac{\eta}{p_1}
\qquad\text{ and }\qquad 
w^{\frac1p} = w_0^{\frac{1 - \eta}{p_0}} w_1^{\frac{\eta}{p_1}}. 
\end{align}
Indeed, \eqref{wpp} can be shown as in the proof of \cite[Lemma 4.1]{COY}. Although the latter is given for cubes, the same argument still holds for Zygmund rectangles because the proof heavily depends on the reverse H\"{o}lder inequality for $A_{p, \Z}$ weights, which is true. In fact, given $p \in (1, \infty)$, $w \in A_{p, \Z}$, and a Zygmund rectangle $I=I_1 \times I_{2, 3}$, we have $[w(\cdot, x_2, x_3)]_{A_p(\R)} \le [w]_{A_{p, \Z}}$ uniformly on $x_2, x_3 \in \R$ and $[\langle w \rangle_{I_1}]_{A_p(\R^2)} \le [w]_{A_{p, \Z}}$ uniformly on $I_1$. Then, by the standard one-parameter reverse H\"{o}lder inequality there exist $r_1, r_2 > 1$ such that 
\begin{align}\label{eq:weta-1}
\bigg(\fint_{I_1} w^{r_1} \, dx_1 \bigg)^{\frac{1}{r_1}} 
\lesssim \fint_{I_1} w \, dx_1 
\qquad \text{uniformly on } x_2, x_3 \in \R
\end{align}
and 
\begin{align}\label{eq:weta-2}
\bigg(\fint_{I_{2, 3}} \langle w \rangle_{I_1}^{r_2} \, dx_2 dx_3 \bigg)^{\frac{1}{r_2}} 
\lesssim \fint_{I_{2, 3}} \langle w \rangle_{I_1} \, dx_2 dx_3 
\qquad\text{uniformly on } I_1. 
\end{align}
Picking $r := \min\{r_1, r_2\}$, we use Jensen's inequality and \eqref{eq:weta-1}--\eqref{eq:weta-2} to obtain that 
\begin{align*}
\fint_I w^r \, dx
&=\fint_{I_{2, 3}} \bigg(\fint_{I_1} w^r \, dx_1\bigg) \, dx_2 \, dx_3
\le \fint_{I_{2, 3}} \bigg(\fint_{I_1} w^{r_1} \, dx_1\bigg)^{\frac{r}{r_1}} \, dx_2 \, dx_3
\\
&\lesssim \fint_{I_{2, 3}} \bigg(\fint_{I_1} w \, dx_1\bigg)^r \, dx_2 \, dx_3
\le \bigg(\fint_{I_{2, 3}} \bigg(\fint_{I_1} w \, dx_1\bigg)^{r_2} \, dx_2 \, dx_3 \bigg)^{\frac{r}{r_2}}
\\
&\lesssim \bigg(\fint_{I_{2, 3}} \fint_{I_1} w \, dx_1 \, dx_2 \, dx_3\bigg)^r
=\bigg(\fint_I w \, dx\bigg)^r, 
\end{align*}
which shows the reverse H\"{o}lder inequality for $A_{p, \Z}$ weights. More details about the proof of \eqref{wpp} are left to the reader. 

By \eqref{MB} and \cite[Theorem 3.9]{CMP}, the assumption \eqref{RdFcpt-2} gives that 
\begin{align}\label{TL-1}
\text{$T$ is bounded on $L^q(v)$ for all $q \in (1, \infty)$ and $v \in A_{q, \Z}$}.
\end{align} 
Then \eqref{TL-1} applied to $w_1 = w_1(\eta) \in A_{p_1, \Z}$ yields that 
\begin{align}\label{TL-2}
\text{$T$ is bounded on $L^{p_1}(w_1)$}.
\end{align}  
From Theorem \ref{thm:inter}, \eqref{wpp}, the assumption \eqref{RdFcpt-1}, and \eqref{TL-2}, we conclude that $T$ is compact on $L^p(w)$.
\qed

\subsection{Weighted compactness of Zygmund shifts}
Let us give the proof of Theorem \ref{thm:SD-cpt}. Recall the weighted boundedness of Zygmund shifts (cf. \cite[Theorem 6.2]{HLMV}):
\begin{align}\label{SDZ}
\sup_{\w} \|\mathbf{S}_{\D_{\w}}^k\|_{L^r(u) \to L^r(u)} 
\lesssim (|k| +1)^2, \quad \forall r \in (1, \infty), \, u \in A_{r, \mathcal{Z}},
\end{align}
where the implicit constant is independent of $k$. By Theorem \ref{thm:RdF-cpt} and \eqref{SDZ}, it suffices to prove that $\E_{\w} \mathbf{S}_{\D_{\w}}^k$ is compact on $L^2(\R^3)$. By duality, similarity, and Definition \ref{def:shift}, it is enough to consider the case 
\begin{align*}
(\widetilde{h}_{I, \Z}, \widetilde{h}_{J, \Z}) 
= (h_{I_1} \otimes H_{I_{2, 3}, J_{2, 3}}, H_{I_1, J_1} \otimes h_{J_{2, 3}}).
\end{align*} 
Then in view of Theorem \ref{thm:KRLp} and \eqref{SDZ}, this is reduced to showing
\begin{align}
\label{SDKR-2}
\lim_{A \to \infty} &\sup_{\|f\|_{L^2} \le 1} 
\|\E_{\w} \mathbf{S}_{\D_{\w}}^k f \, \mathbf{1}_{B(0, A)^c}\|_{L^2} 
= 0, 
\\ 
\label{SDKR-3}
\lim_{|v| \to 0} &\sup_{\|f\|_{L^2} \le 1} 
\|\tau_v \, \E_{\w} \mathbf{S}_{\D_{\w}}^k f
- \E_{\w} \mathbf{S}_{\D_{\w}}^k f\|_{L^2} 
= 0. 
\end{align}

Define  
\begin{align*}
\mathbf{S}_{\D}^{k, N} f
:= \sum_{K \notin \D_{\lambda(k)}^N} 
\sum_{\substack{I, J \in \D_{\Z} \\ I^{(k)} = J^{(k)} = K}}   
a_{IJK} \, \langle f, \widetilde{h}_{I, \Z} \rangle \, \widetilde{h}_{J, \Z}. 
\end{align*}
Observe that 
\begin{align}
\label{UKK-1}
\langle f, h_{I_1} \otimes H_{I_{2, 3}, J_{2, 3}} \rangle 
& = \langle \mathcal{U}_{K, k} f, h_{I_1} \otimes H_{I_{2, 3}, J_{2, 3}} \rangle, 
\\ 
\label{UKK-2}
\langle f, H_{I_1, J_1} \otimes h_{I_{2, 3}} \rangle 
& = \langle \mathcal{U}_{K, k} f, H_{I_1, J_1} \otimes h_{I_{2, 3}} \rangle, 
\\
\label{UKK-3}
\langle f, H_{I_1, J_1} \otimes H_{I_{2, 3}, J_{2, 3}} \rangle 
& = \langle \mathcal{U}_{K, k} f, H_{I_1, J_1} \otimes H_{I_{2, 3}, J_{2, 3}} \rangle.
\end{align}
Recall that 
\begin{align}\label{HHhh}
H_{I_1, J_1} = h_{I_1}^0 - h_{J_1}^0 
\quad \text{ and } \quad 
H_{I_{2, 3}, J_{2, 3}} = h_{I_{2, 3}}^0 - h_{J_{2, 3}}^0. 
\end{align}
Then it follows from \eqref{UKK-1}--\eqref{HHhh} that 
\begin{align}\label{XX}
|\langle \mathbf{S}_{\D}^{k, N} f, g \rangle| 
\le \Xi_1 + \Xi_2 + \Xi_3 + \Xi_4, 
\end{align}
where 
\begin{align*}
\Xi_1 
& := \sum_{K \notin \D_{\lambda(k)}^N} 
\sum_{\substack{I, J \in \D_{\Z} \\ I^{(k)} = J^{(k)} = K}}   
\mathbf{F}(K) \frac{|I|}{|K|} 
|\langle \mathcal{U}_{K, k} f, h_{I_1} \otimes h_{I_{2, 3}}^0 \rangle|
|\langle \mathcal{U}_{K, k} g, h_{I_1}^0 \otimes h_{J_{2, 3}} \rangle|, 
\\
\Xi_2 
& := \sum_{K \notin \D_{\lambda(k)}^N} 
\sum_{\substack{I, J \in \D_{\Z} \\ I^{(k)} = J^{(k)} = K}}   
\mathbf{F}(K) \frac{|I|}{|K|} 
|\langle \mathcal{U}_{K, k} f, h_{I_1} \otimes h_{I_{2, 3}}^0 \rangle|
|\langle \mathcal{U}_{K, k} g, h_{J_1}^0 \otimes h_{J_{2, 3}} \rangle|, 
\\
\Xi_3 
& := \sum_{K \notin \D_{\lambda(k)}^N} 
\sum_{\substack{I, J \in \D_{\Z} \\ I^{(k)} = J^{(k)} = K}}   
\mathbf{F}(K) \frac{|I|}{|K|} 
|\langle \mathcal{U}_{K, k} f, h_{I_1} \otimes h_{J_{2, 3}}^0 \rangle|
|\langle \mathcal{U}_{K, k} g, h_{I_1}^0 \otimes h_{J_{2, 3}} \rangle|, 
\\
\Xi_4 
& := \sum_{K \notin \D_{\lambda(k)}^N} 
\sum_{\substack{I, J \in \D_{\Z} \\ I^{(k)} = J^{(k)} = K}}   
\mathbf{F}(K) \frac{|I|}{|K|} 
|\langle \mathcal{U}_{K, k} f, h_{I_1} \otimes h_{J_{2, 3}}^0 \rangle|
|\langle \mathcal{U}_{K, k} g, h_{J_1}^0 \otimes h_{J_{2, 3}} \rangle|. 
\end{align*}
To bound $\Xi_1 $, note that 
\begin{align}\label{JJK-1}
\sum_{J_1: J_1^{(k_1)} = K_1} 1 = 2^{k_1} = \frac{|K_1|}{|I_1|}
\quad \text{ and } \quad 
\sum_{\substack{J_2: J_2^{(k_2)} = K_2 \\ J_3: J_3^{(k_3)} = K_3}} \mathbf{1}_{J_{2, 3}} 
= \mathbf{1}_{K_{2, 3}}. 
\end{align}
Thus, by \eqref{JJK-1} and the fact that $I_1 \times K_2 \times K_3 \in \D_{2^{k_3-k_2}}$,  
\begin{align}\label{XX-1}
\Xi_1 
& \le \mathbf{F}_N \sum_{K \in \D_{\lambda(k)}}  
\sum_{\substack{I, J \in \D_{\Z} \\ I^{(k)} = J^{(k)} = K}} \frac{1}{|K|} 
\langle |\mathcal{U}_{K, k} f|, \mathbf{1}_{I_1} \otimes \mathbf{1}_{I_{2, 3}} \rangle
\langle |\mathcal{U}_{K, k} g|, \mathbf{1}_{I_1} \otimes \mathbf{1}_{J_{2, 3}} \rangle
\\ \nonumber 
& \le \mathbf{F}_N \sum_{K \in \D_{\lambda(k)}}  
\sum_{I_1^{(k_1)} = K_1} 
\langle |\mathcal{U}_{K, k} f| \rangle_{I_1 \times K_{2, 3}} 
\langle |\mathcal{U}_{K, k} g|, \mathbf{1}_{I_1} \otimes \mathbf{1}_{K_{2, 3}} \rangle
\\ \nonumber 
& \le \mathbf{F}_N \sum_{K \in \D_{\lambda(k)}}  
\sum_{I_1^{(k_1)} = K_1} \int_{I_1 \times K_{2, 3}}  
\big(M_{\D_{2^{k_3-k_2}}} \mathcal{U}_{K, k} f \big) \, 
|\mathcal{U}_{K, k} g| \, dx
\\ \nonumber 
& \le \mathbf{F}_N \int_{\R^3}  \sum_{K \in \D_{\lambda(k)}}
\big(M_{\D_{2^{k_3-k_2}}} \mathcal{U}_{K, k} f \big) \, 
|\mathcal{U}_{K, k} g| \, dx
\\ \nonumber 
& \le \mathbf{F}_N \bigg\|\bigg(\sum_{K \in \D_{\lambda(k)}}
\big(M_{\D_{2^{k_3-k_2}}} \mathcal{U}_{K, k} f \big)^2 \bigg)^{\frac12} \bigg\|_{L^2}
\bigg\| \bigg(\sum_{K \in \D_{\lambda(k)}} |\mathcal{U}_{K, k} g|^2 \bigg)^{\frac12} \bigg\|_{L^2}
\\ \nonumber 
& \lesssim_k \mathbf{F}_N \bigg\|\bigg(\sum_{K \in \D_{\lambda(k)}}
|\mathcal{U}_{K, k} f|^2 \bigg)^{\frac12} \bigg\|_{L^2}
\bigg\| \bigg(\sum_{K \in \D_{\lambda(k)}} |\mathcal{U}_{K, k} g|^2 \bigg)^{\frac12} \bigg\|_{L^2}
\\ \nonumber 
& \lesssim_k \mathbf{F}_N \|f\|_{L^2} \|g\|_{L^2}, 
\end{align}
provided \eqref{MM-1}--\eqref{MM-3}. Similarly, we use the fact $\sum_{I \in \D_{\Z}: I^{(k)} = K} \mathbf{1}_I  = \mathbf{1}_K$ to obtain 
\begin{align}\label{XX-2}
\Xi_2 
& \le \mathbf{F}_N \sum_{K \in \D_{\lambda(k)}}  
\sum_{\substack{I, J \in \D_{\Z} \\ I^{(k)} = J^{(k)} = K}} \frac{1}{|K|} 
\langle |\mathcal{U}_{K, k} f|, \mathbf{1}_I \rangle \, 
\langle |\mathcal{U}_{K, k} g|, \mathbf{1}_J \rangle
\\ \nonumber 
& \le \mathbf{F}_N \sum_{K \in \D_{\lambda(k)}}  
\langle |\mathcal{U}_{K, k} f| \rangle_K 
\langle |\mathcal{U}_{K, k} g|, \mathbf{1}_K \rangle
\\ \nonumber 
& \le \mathbf{F}_N \int_{\R^3}  \sum_{K \in \D_{\lambda(k)}}
\big(M_{\D_{\lambda(k)}} \mathcal{U}_{K, k} f \big) \, 
|\mathcal{U}_{K, k} g| \, dx
\\ \nonumber 
& \le \mathbf{F}_N \bigg\|\bigg(\sum_{K \in \D_{\lambda(k)}}
\big(M_{\D_{\lambda(k)}} \mathcal{U}_{K, k} f \big)^2 \bigg)^{\frac12} \bigg\|_{L^2}
\bigg\| \bigg(\sum_{K \in \D_{\lambda(k)}} |\mathcal{U}_{K, k} g|^2 \bigg)^{\frac12} \bigg\|_{L^2}
\\ \nonumber 
& \lesssim_k \mathbf{F}_N \bigg\|\bigg(\sum_{K \in \D_{\lambda(k)}}
|\mathcal{U}_{K, k} f|^2 \bigg)^{\frac12} \bigg\|_{L^2}
\bigg\| \bigg(\sum_{K \in \D_{\lambda(k)}} |\mathcal{U}_{K, k} g|^2 \bigg)^{\frac12} \bigg\|_{L^2}
\\ \nonumber 
& \lesssim_k \mathbf{F}_N \|f\|_{L^2} \|g\|_{L^2}. 
\end{align}
Moreover, by \eqref{MM-1}--\eqref{MM-3} and that 
\begin{align*}
\sum_{J \in \D_{\Z}: J^{(k)} = K} 1 = 2^{k_1 + k_2 + k_3} = |K|/|I|, 
\end{align*}
we arrive at 
\begin{align}\label{XX-3}
\Xi_3 
& \le \mathbf{F}_N \sum_{K \in \D_{\lambda(k)}}  
\sum_{\substack{I, J \in \D_{\Z} \\ I^{(k)} = J^{(k)} = K}} \frac{1}{|K|} 
\langle |\mathcal{U}_{K, k} f|, \mathbf{1}_{I_1} \otimes \mathbf{1}_{J_{2, 3}} \rangle
\langle |\mathcal{U}_{K, k} g|, \mathbf{1}_{I_1} \otimes \mathbf{1}_{J_{2, 3}} \rangle
\\ \nonumber 
& \le \mathbf{F}_N \sum_{K \in \D_{\lambda(k)}}  
\sum_{I^{(k)} = K} 
\langle |\mathcal{U}_{K, k} f| \rangle_I 
\langle |\mathcal{U}_{K, k} g|, \mathbf{1}_I \rangle
\\ \nonumber 
& \le \mathbf{F}_N \sum_{K \in \D_{\lambda(k)}}  
\sum_{I^{(k)} = K} \int_I 
\big(M_{\D_{\Z}} \mathcal{U}_{K, k} f \big) \, 
|\mathcal{U}_{K, k} g| \, dx
\\ \nonumber 
& \le \mathbf{F}_N \int_{\R^3}  \sum_{K \in \D_{\lambda(k)}}
\big(M_{\D_{\Z}} \mathcal{U}_{K, k} f \big) \, 
|\mathcal{U}_{K, k} g| \, dx
\\ \nonumber 
& \le \mathbf{F}_N \bigg\|\bigg(\sum_{K \in \D_{\lambda(k)}}
\big(M_{\D_{\Z}} \mathcal{U}_{K, k} f \big)^2 \bigg)^{\frac12} \bigg\|_{L^2}
\bigg\| \bigg(\sum_{K \in \D_{\lambda(k)}} |\mathcal{U}_{K, k} g|^2 \bigg)^{\frac12} \bigg\|_{L^2}
\\ \nonumber 
& \lesssim_k \mathbf{F}_N \bigg\|\bigg(\sum_{K \in \D_{\lambda(k)}}
|\mathcal{U}_{K, k} f|^2 \bigg)^{\frac12} \bigg\|_{L^2}
\bigg\| \bigg(\sum_{K \in \D_{\lambda(k)}} |\mathcal{U}_{K, k} g|^2 \bigg)^{\frac12} \bigg\|_{L^2}
\\ \nonumber 
& \lesssim_k \mathbf{F}_N \|f\|_{L^2} \|g\|_{L^2}. 
\end{align}
To estimate $\Xi_4$, observe that 
\begin{align}\label{JJK-2}
\sum_{I_1: I_1^{(k_1)} = K_1} \mathbf{1}_{I_1} = \mathbf{1}_{K_1}
\quad \text{ and } \quad 
\sum_{\substack{I_2: I_2^{(k_2)} = K_2 \\ I_3: I_3^{(k_3)} = K_3}} 1 
= 2^{k_2 + k_3} = \frac{|K_{2, 3}|}{|I_{2, 3}|}.
\end{align}
Along with \eqref{JJK-2} and the fact $K_1 \times J_2 \times J_3 \in \D_{2^{-k_1}}$, the estimates \eqref{MM-1}--\eqref{MM-3} imply 
\begin{align}\label{XX-4}
\Xi_4 
& \le \mathbf{F}_N \sum_{K \in \D_{\lambda(k)}}  
\sum_{\substack{I, J \in \D_{\Z} \\ I^{(k)} = J^{(k)} = K}} \frac{1}{|K|} 
\langle |\mathcal{U}_{K, k} f|, \mathbf{1}_{I_1} \otimes \mathbf{1}_{J_{2, 3}} \rangle
\langle |\mathcal{U}_{K, k} g|, \mathbf{1}_{J_1} \otimes \mathbf{1}_{J_{2, 3}} \rangle
\\ \nonumber 
& \le \mathbf{F}_N \sum_{K \in \D_{\lambda(k)}}  
\sum_{\substack{J_2: J_2^{(k_2)} = K_2 \\ J_3: J_3^{(k_3)} = K_3}} 
\langle |\mathcal{U}_{K, k} f| \rangle_{K_1 \times J_{2, 3}} 
\langle |\mathcal{U}_{K, k} g|, \mathbf{1}_{K_1} \otimes \mathbf{1}_{J_{2, 3}} \rangle
\\ \nonumber 
& \le \mathbf{F}_N \sum_{K \in \D_{\lambda(k)}}  
\sum_{\substack{J_2: J_2^{(k_2)} = K_2 \\ J_3: J_3^{(k_3)} = K_3}}  
\int_{K_1 \times J_{2, 3}} 
\big(M_{\D_{2^{-k_1}}} \mathcal{U}_{K, k} f \big) \, 
|\mathcal{U}_{K, k} g| \, dx
\\ \nonumber 
& \le \mathbf{F}_N \int_{\R^3}  \sum_{K \in \D_{\lambda(k)}}
\big(M_{\D_{2^{-k_1}}} \mathcal{U}_{K, k} f \big) \, 
|\mathcal{U}_{K, k} g| \, dx
\\ \nonumber 
& \le \mathbf{F}_N \bigg\|\bigg(\sum_{K \in \D_{\lambda(k)}}
\big(M_{\D_{2^{-k_1}}} \mathcal{U}_{K, k} f \big)^2 \bigg)^{\frac12} \bigg\|_{L^2}
\bigg\| \bigg(\sum_{K \in \D_{\lambda(k)}} |\mathcal{U}_{K, k} g|^2 \bigg)^{\frac12} \bigg\|_{L^2}
\\ \nonumber 
& \lesssim_k \mathbf{F}_N \bigg\|\bigg(\sum_{K \in \D_{\lambda(k)}}
|\mathcal{U}_{K, k} f|^2 \bigg)^{\frac12} \bigg\|_{L^2}
\bigg\| \bigg(\sum_{K \in \D_{\lambda(k)}} |\mathcal{U}_{K, k} g|^2 \bigg)^{\frac12} \bigg\|_{L^2}
\\ \nonumber 
& \lesssim_k \mathbf{F}_N \|f\|_{L^2} \|g\|_{L^2}. 
\end{align}
Therefore, by \eqref{XX}, \eqref{XX-1}, \eqref{XX-2}, \eqref{XX-3}, and \eqref{XX-4}, 
\begin{align}\label{SNL2}
\|\mathbf{S}_{\D}^{k, N} f\|_{L^2} 
\lesssim_k \mathbf{F}_N \|f\|_{L^2}, 
\end{align}
where the implicit constant is independent of $N$, $\D$, and $f$. 

Fix $k \in \N^3$. Let $A \ge 2^{|k| + 100}$ and $N := \big[\frac13 (\log_2 A - |k| - 1) \big] \ge 5$. Observe that 
\begin{align}\label{KN-1}
\supp(\widetilde{h}_{I, \Z}) \subset K, 
\qquad
\supp(\widetilde{h}_{J, \Z}) \subset K 
\quad \text{ for all } I, J \subset K, 
\end{align}
and 
\begin{align}\label{KN-2}
\bigcup_{K \in \D_{\lambda(k)}^N} K 
\subset \big\{x \in \R^3: |x_1| \le 2^{2N}, |x_2| \le 2^{2N}, |x_3| \le (\lambda(k)+1) 2^{3N} \big\} 
\subset B(0, 2A). 
\end{align}
Then, invoking Minkowski's inequality and \eqref{SNL2}--\eqref{KN-2}, we conclude  
\begin{align*}
& \|\E_{\w} \mathbf{S}_{\D_{\w}}^k f \, \mathbf{1}_{B(0, 2A)^c}\|_{L^2} 
\le \E_{\w} \|\mathbf{S}_{\D_{\w}}^k f \, \mathbf{1}_{B(0, 2A)^c}\|_{L^2} 
\\
&= \E_{\w} \|\mathbf{S}_{\D_{\w}}^{k, N} f \, \mathbf{1}_{B(0, 2A)^c}\|_{L^2} 
\le  \E_{\w} \|\mathbf{S}_{\D_{\w}}^{k, N} f\|_{L^2} 
\lesssim \mathbf{F}_N \|f\|_{L^2}, 
\end{align*}
where the implicit constant is independent of $\w$, $A$, and $f$. This implies \eqref{SDKR-2} as desired.

It remains to justify \eqref{SDKR-3}. Let $0 < |v| = |v_1| + |v_2| + |v_3| \ll 2^{-100}$ and $a \ge 2$ be an integer chosen later. There exists an integer $N=N(v) \ge 5$ such that $2^{-a (N+1)} \le |v| < 2^{- a N}$. If we denote $\phi_J^v := \tau_v (H_{I_1, J_1} \otimes h_{J_{2, 3}}) - H_{I_1, J_1} \otimes h_{J_{2, 3}}$, then   
\begin{align}\label{HFX} 
\|\tau_v \E_{\w} \mathbf{S}_{\D_{\w}}^k f
- \E_{\w} \mathbf{S}_{\D_{\w}}^k f\|_{L^2} 
\le \E_{\w} \big[\Gamma_{\w}^1(v; f)  + \Gamma_{\w}^2(v; f)  \big], 
\end{align}
where 
\begin{align*}
\Gamma_{\w}^1(v; f) 
&:= \bigg\|\sum_{K \notin \D_{\lambda(k)}^N} 
\sum_{\substack{I, J \in \D_{\w, \Z} \\ I^{(k)} = J^{(k)} = K}}   
a_{IJK} \, \langle f, h_{I_1} \otimes H_{I_{2, 3}, J_{2, 3}} \rangle \, \phi_J^v \bigg\|_{L^2}, 
\\ 
\Gamma_{\w}^2(v; f) 
&:= \bigg\|\sum_{K \in \D_{\lambda(k)}^N} 
\sum_{\substack{I, J \in \D_{\w, \Z} \\ I^{(k)} = J^{(k)} = K}}   
a_{IJK} \, \langle f, h_{I_1} \otimes H_{I_{2, 3}, J_{2, 3}} \rangle \, \phi_J^v \bigg\|_{L^2}.
\end{align*}
The inequality \eqref{SNL2} immediately gives 
\begin{align}\label{XVF}
\Gamma_{\w}^1(v; f) 
\le \|\tau_v \mathbf{S}_{\D_{\w}}^{k, N} f\|_{L^2} 
+ \|\mathbf{S}_{\D_{\w}}^{k, N} f\|_{L^2}
= 2 \|\mathbf{S}_{\D_{\w}}^{k, N} f\|_{L^2} 
\lesssim_k \mathbf{F}_N \|f\|_{L^2},  
\end{align}
where the implicit constant is independent of $\w$, $v$, and $f$. Note that $\lim_{N \to \infty} \mathbf{F}_N = 0$ and $|v| \to 0$ implies $N(v) \to \infty$. This and \eqref{XVF} lead to 
\begin{align}\label{HFX-1}
\lim_{|v| \to 0} \sup_{\w} 
\sup_{\|f\|_{L^2} \le 1} \Gamma_{\w}^1(v; f)  = 0. 
\end{align}
To estimate $\Gamma_{\w}^2(v; f)$, note that by \eqref{KN-2}, 
\begin{align}\label{car-DN}
\# \D_{\lambda(k)}^N 
\lesssim_k \, 2^{3N} \times 2^{3N} \times 2^{5N}
= 2^{11N}, \quad\forall N \ge 5, 
\end{align}
and by the fact $H_{I_1, J_1} = h_{I_1}^0 - h_{J_1}^0$, 
\begin{align}\label{PLP}
\|\phi_J^v\|_{L^2} 
& \le \|\tau_{v_1} h_{I_1}^0 - h_{I_1}^0\|_{L^2} 
\|\tau_{v_2} h_{J_2}\|_{L^2} 
\|\tau_{v_3} h_{J_3}\|_{L^2}
\\ \nonumber 
& \quad + \|\tau_{v_1} h_{J_1}^0 - h_{J_1}^0\|_{L^2} 
\|\tau_{v_2} h_{J_2}\|_{L^2} 
\|\tau_{v_3} h_{J_3}\|_{L^2}
\\ \nonumber 
& \quad + \|H_{I_1, J_1}\|_{L^2} 
\|\tau_{v_2} h_{J_2} - h_{J_2}\|_{L^2} 
\|\tau_{v_3} h_{J_3}\|_{L^2}
\\ \nonumber 
& \quad + \|H_{I_1, J_1}\|_{L^2} 
\|h_{J_2}\|_{L^2} 
\|\tau_{v_3} h_{J_3} - h_{J_3}\|_{L^2}
\\ \nonumber 
&\lesssim |v_1|^{\frac12} \ell(I_1)^{-\frac12} 
+ |v_1|^{\frac12} \ell(J_1)^{-\frac12} 
+ |v_2|^{\frac12} \ell(J_2)^{-\frac12} 
+ |v_3|^{\frac12} \ell(J_3)^{-\frac12}.  
\end{align}
Hence, it follows from \eqref{car-DN} and \eqref{PLP} that 
\begin{align*}
\Gamma_{\w}^2(v; f)
& \lesssim \sum_{K \in \D_{\lambda(k)}^N} 
\sum_{\substack{I, J \in \D_{\Z} \\ I^{(k)} = J^{(k)} = K}}   
|I|^{\frac12} \langle |f| \rangle_K
\|\phi_J^v\|_{L^2}
\\
&\lesssim |v|^{\frac12} \sum_{K \in \D_{\lambda(k)}^N} 
\sum_{\substack{I, J \in \D_{\w, \Z} \\ I^{(k)} = J^{(k)} = K}}   
|I|^{\frac12} |K|^{-\frac12}  
\big[\ell(J_1)^{-\frac12} + \ell(J_2)^{-\frac12} + \ell(J_3)^{-\frac12}\big]
\|f\|_{L^2}
\\
&\lesssim |v|^{\frac12} \# \D_{\lambda(k)}^N 
2^{2(k_1 + k_2 + k_3)} 2^{-(k_1 + k_2 + k_3)/2} 
\big[ 2^{k_1/2} 2^{N/2} + 2^{k_2/2} 2^{N/2} + 2^{k_3/2} \lambda(k)^{1/2} 2^N \big] \|f\|_{L^2}
\\
&\lesssim_k 2^{- (a/2-12)N} \|f\|_{L^2}. 
\end{align*}
If we take $a > 24$, then there holds 
\begin{align}\label{HFX-2}
\lim_{|v| \to 0} \sup_{\w} 
\sup_{\|f\|_{L^2} \le 1} \Gamma_{\w}^2(v; f)  = 0. 
\end{align} 
Therefore, \eqref{SDKR-3} is a consequence of \eqref{HFX}, \eqref{HFX-1}, and \eqref{HFX-2}.  
\qed

\subsection{Proof of Theorem \ref{thm:cpt}}
Let us conclude the compactness of CZZ operators. Let $\delta_1, \delta_{2, 3} \in (0, 1]$. Observe that 
\begin{align}\label{Daa}
(\log 2) D_1(x) \le D_{\log}(x) \le \alpha^{-1} D_{1-\alpha}(x), 
\quad\forall x \in \Rn, \, \alpha \in (0, 1).
\end{align}
Thus, it suffices to prove part \eqref{list-01} for $(D_{\theta}, \delta_1, \delta_{2, 3})$-CZZ operator with $\theta \in (0, 1)$ and part \eqref{list-02} for $(D_{\log}, 1, \delta_{2, 3})$-CZZ operators. We only present the proof of the latter because the former can be shown in a similar way. 

Let $p \in (1, \infty)$ and $w \in A_{p, \Z}$. Let $T$ be a $(D_{\log}, 1, \delta_{2, 3})$-CZZ operator satisfying the corresponding hypotheses \eqref{H1}--\eqref{H4}. In view of \eqref{Daa}, $T$ is a $(D_{\theta}, 1, \delta_{2, 3})$-CZZ operator satisfying \eqref{H1}--\eqref{H4} for any $\theta \in (0, 1)$. Thus, Theorem \ref{thm:repre} holds for $T$. Define 
\begin{align*}
T^N := \sum_{k \in \N^3: |k| \le N} \sum_{m=1}^{m_0}
\varphi(k) \, \E_{\w} \mathbf{S}_{m, \D_{\w}}^k, \quad N \ge 1.
\end{align*}
It follows from Theorem \ref{thm:repre}, \eqref{SDZ}, and Lemma \ref{lem:kkk} applied to $(\rho, \alpha_0, \alpha_1, \alpha_2, \alpha_3) = \big(2, \theta, \delta_1, \min\{\delta_{2, 3}, \theta\}, 0 \big)$ that  
\begin{equation}\label{TNN}
\begin{aligned}
& \|T - T^N\|_{L^p(w) \to L^p(w)} 
\le \sum_{|k| > N} \sum_{m=1}^{m_0} 
\varphi(k) \, \E_{\w} \|\mathbf{S}_{m, \D_{\w}}^k\|_{L^p(w) \to L^p(w)} 
\\ 
& \lesssim \sum_{|k| > N} (|k| + 1)^2 \varphi(k)
\lesssim N^{10} 2^{-N \min\{\delta_1, \, \delta_{2, 3}, \, \theta\}/6} \to 0, 
\quad\text{as } N \to \infty. 
\end{aligned}
\end{equation}
Moreover, by Theorem \ref{thm:SD-cpt}, $T^N$ is compact on $L^p(w)$, which along with \eqref{TNN} implies the compactness of $T$ on $L^p(w)$. This justifies Theorem \ref{thm:cpt} part \eqref{list-02}. 
\qed

\section{Bilinear Calder\'{o}n--Zygmund operators associated with Zygmund dilations}\label{sec:BCZZ}
Let us define bilinear Calder\'{o}n--Zygmund operators associated with Zygmund dilations. Given a bilinear operator $T$ acting on functions defined on $\R^1 \times \R^2$, its \emph{adjoint operators} are defined by
\begin{align*}
& \langle T(f_1 \otimes f_{2, 3}, g_1 \otimes g_{2, 3}), h_1 \otimes h_{2, 3} \rangle
\\
& = \langle T^{*, 1}(h_1 \otimes h_{2, 3}, g_1 \otimes g_{2, 3}), f_1 \otimes f_{2, 3} \rangle 
= \langle T^{*, 2} (f_1 \otimes f_{2, 3}, h_1 \otimes h_{2, 3}), g_1 \otimes g_{2, 3} \rangle 
\\
& = \langle T_1^{*, 1} (g_1 \otimes f_{2, 3}, g_1 \otimes g_{2, 3}), f_1 \otimes h_{2, 3} \rangle 
= \langle T_1^{*, 2} (f_1 \otimes f_{2, 3}, h_1 \otimes g_{2, 3}), g_1 \otimes h_{2, 3} \rangle 
\\
& = \langle T_{2, 3}^{*, 1} (f_1 \otimes h_{2, 3}, g_1 \otimes g_{2, 3}), h_1 \otimes f_{2, 3} \rangle 
= \langle T_{2, 3}^{*, 2} (f_1 \otimes f_{2, 3}, g_1 \otimes h_{2, 3}), h_1 \otimes g_{2, 3} \rangle.
\end{align*}
If $K$ is the integral kernel of $T$, then the kernels $K^{*, j}, K_1^{*, j}, K_{2, 3}^{*, j}$ of $T^{*, j}, T_1^{*, j}, T_{2, 3}^{*, j}$ are given by
\begin{align*}
& K^{*, 1}(x, y, z) = K(z, y, x), \quad
K^{*, 2}(x, y, z) = K(x, z, y), 
\\
& K_1^{*, 1}(x, y, z) = K((z_1, x_2, x_3), y, (x_1, z_2, z_3)), \quad
K_1^{*, 2}(x, y, z) = K(x, (z_1, y_2, y_3), (y_1, z_2, z_3)),
\\
& K_{2, 3}^{*, 1}(x, y, z) = K((x_1, z_2, z_3), y, (z_1, x_2, x_3)), \quad
K_{2, 3}^{*, 2}(x, y, z) = K(x, (y_1, z_2, z_3), (z_1, y_2, y_3)).
\end{align*}

Given  parameters $\vartheta \in (0, 2]$ and $\theta \in (0, 1]$, denote 
\begin{align*}
\mathfrak{D}_{\vartheta}(x, y) 
:= \bigg[\frac{(|x_1| + |y_1|)(|x_2| + |y_2|)}{|x_3| + |y_3|} 
+ \frac{|x_3| + |y_3|}{(|x_1| + |y_1|)(|x_2| + |y_2|)} \bigg]^{-\vartheta} 
\end{align*}
for any $(x, y) \in \R^3 \times \R^3 \setminus \{(x, y): x_i = 0 \text{ and } y_i = 0 \text{ for some } i=1, 2, 3 \}$, and 
\begin{align*}
D_{\theta}(x, y) 
:= \bigg[\frac{|x_1| (|x_2| + |y_2|)}{|x_3| + |y_3|} 
+ \frac{|x_3| + |y_3|}{|x_1| (|x_2| + |y_2|)} \bigg]^{-\theta} 
\end{align*}
for any $(x, y) \in \R^3 \times \R^3 \setminus \{(x, y): x_1 = 0 \text{ or } x_i = 0 \text{ and } y_i = 0 \text{ for some } i=2, 3 \}$.

First, let us define the compact full kernel representation. In what follows, let $\delta_1, \delta_{2, 3} \in (0, 1]$ be fixed numbers.

\begin{definition}\label{def:Bfull}
A bilinear operator $T$ admits the \emph{compact full kernel representation} if the following hold. 
If $f = f_1 \otimes f_{2, 3}$, $g = g_1 \otimes g_{2, 3}$, and $h = h_1 \otimes h_{2, 3}$ with $f_1, g_1, h_1 :\R \rightarrow \R$, $f_{2, 3}, g_{2, 3}, h_{2, 3} :\R^2 \rightarrow \R$, $\supp(f_1) \cap \supp(g_1) \cap \supp(h_1) = \emptyset$, and $\supp(f_{2, 3}) \cap \supp(g_{2, 3}) \cap \supp(h_{2, 3}) = \emptyset$, then 
\begin{align*}
\langle T(f, g), h \rangle 
= \int_{\R^3} \int_{\R^3} \int_{\R^3} K(x, y, z) f(y) g(z) h(x) \, dx \, dy \, dz,
\end{align*}
where the kernel $K: (\R^3 \times \R^3 \times \R^3) \setminus \big\{(x, y, z): x_i = y_i = z_i \text{ for some } i=1, 2, 3 \big\} \rightarrow \C$ satisfies 
\begin{list}{\rm (\theenumi)}{\usecounter{enumi}\leftmargin=1.2cm \labelwidth=1cm \itemsep=0.2cm \topsep=0.2cm \renewcommand{\theenumi}{\arabic{enumi}}} 

\item\label{Bfull-1} the size condition: 
\begin{align*}
|K(x, y, z)| 
\leq \mathfrak{D}_{\vartheta}(x - y, x - z) 
\prod_{i=1}^3 \frac{F_i(x_i, y_i, z_i)}{(|x_i - y_i| + |x_i - z_i|)^2}. 
\end{align*}

\item\label{Bfull-2} the H\"{o}lder condition: 
\begin{align*}
& |K(x, y, z) - K((x_1, x'_2, x'_3), y, z) - K((x'_1, x_2, x_3), y, z) + K(x', y, z)| 
\\
& \leq \bigg(\frac{|x_1 - x'_1|}{|x_1 - y_1| + |x_1 - z_1|}\bigg)^{\delta_1} 
\bigg(\frac{|x_2 - x'_2|}{|x_2 - y_2| + |x_2 - z_2|} 
+ \frac{|x_3 - x'_3|}{|x_3 - y_3| + |x_3 - z_3|}\bigg)^{\delta_{2, 3}}
\\
&\quad \times \mathfrak{D}_{\vartheta}(x - y, x - z) 
\prod_{i=1}^3 \frac{F_i(x_i, y_i, z_i)}{(|x_i - y_i| + |x_i - z_i|)^2}. 
\end{align*}
whenever $|x_i - x'_i| \leq \max\{|x_i - y_i|, |x_i - z_i|\}/2$ for $i=1, 2, 3$.

\item\label{Bfull-3} the mixed size-H\"{o}lder conditions:  
\begin{align*}
& |K(x, y, z) - K((x'_1, x_2, x_3), y, z)|
\\
& \leq \bigg(\frac{|x_1 - x'_1|}{|x_1 - y_1| + |x_1 - z_1|}\bigg)^{\delta_1} 
\mathfrak{D}_{\vartheta}(x - y, x - z) 
\prod_{i=1}^3 \frac{F_i(x_i, y_i, z_i)}{(|x_i - y_i| + |x_i - z_i|)^2}, 
\end{align*}
whenever $|x_1 - x'_1| \leq \max\{|x_1 - y_1|, |x_1 - z_1|\}/2$, and
\begin{align*}
& |K(x, y, z) - K((x_1, x'_2, x'_3), y, z)|
\\
& \le \bigg(\frac{|x_2 - x'_2|}{|x_2 - y_2| + |x_2 - z_2|} 
+ \frac{|x_3 - x'_3|}{|x_3 - y_3| + |x_3 - z_3|}\bigg)^{\delta_{2, 3}}
\\
& \quad \times \mathfrak{D}_{\vartheta}(x - y, x - z) 
\prod_{i=1}^3 \frac{F_i(x_i, y_i, z_i)}{(|x_i - y_i| + |x_i - z_i|)^2}, 
\end{align*}
whenever $|x_2 - x'_2| \leq \max\{|x_2 - y_2|, |x_2 - z_2|\}/2$ and $|x_3 - x'_3| \leq \max\{|x_3 - y_3|, |x_3 - z_3|\}/2$. 

\item\label{Bfull-4} the function $F_i$ above is given by 
\begin{align*}
F_i(x_i, y_i, z_i) 
:= F_{i, 1}(|x_i - y_i| + |x_i - z_i|) 
F_{i, 2}(|x_i - y_i| + |x_i - z_i|) 
F_{i, 3}(|x_i + y_i| + |x_i + z_i|),  
\end{align*}
where $(F_{i, 1}, F_{i, 2}, F_{i, 3}) \in \mathscr{F}$, $i=1, 2, 3$.

\item\label{Bfull-5} The adjoint kernels $K^{*, j}, K_1^{*, j}, K_{2, 3}^{*, j}$, $j=1, 2$, satisfy the same estimates \eqref{Bfull-1}--\eqref{Bfull-4}.
\end{list}
\end{definition}

Next, we define the compact partial kernel representation.

\begin{definition}\label{def:Bpartial-1}
A bilinear operator $T$ admits a \emph{compact partial kernel representation on the parameter $\{1\}$} if the following hold. If $f = f_1 \otimes f_{2, 3}$, $g = g_1 \otimes g_{2, 3}$, and $h = h_1 \otimes h_{2, 3}$ with $\supp(f_1) \cap \supp(g_1) \cap \supp(h_1) = \emptyset$, then 
\begin{align*}
\langle T(f, g), h \rangle 
= \int_{\R} \int_{\R} K_{f_{2, 3}, g_{2, 3}, h_{2, 3}}(x_1, y_1, z_1) 
f_1(y_1) g_1(z_1) h_1(x_1) \, dx_1 \, dy_1 \, dz_1,
\end{align*}
where the kernel $K_{f_{2, 3}, g_{2, 3}, h_{2, 3}}: (\R \times \R \times \R) \setminus \big\{(x_1, y_1, z_1): x_1 = y_1 = z_1 \big\} \rightarrow \C$ satisfies  
\begin{list}{\rm (\theenumi)}{\usecounter{enumi}\leftmargin=1.2cm \labelwidth=1cm \itemsep=0.2cm \topsep=0.2cm \renewcommand{\theenumi}{\arabic{enumi}}} 

\item\label{Bpartial-11} the size condition:
\begin{align*}
|K_{f_{2, 3}, g_{2, 3}, h_{2, 3}}(x_1, y_1, z_1)| 
\leq C(f_{2, 3}, g_{2, 3}, h_{2, 3}) \frac{F_1(x_1, y_1, z_1)}{(|x_1 - y_1| + |x_1 - z_1|)^2}. 
\end{align*}

\item\label{Bpartial-12} the H\"{o}lder conditions:
\begin{align*}
& |K_{f_{2, 3}, g_{2, 3}, h_{2, 3}}(x_1, y_1, z_1) - K_{f_{2, 3}, g_{2, 3}}(x'_1, y_1, z_1)|
\\
& \leq C(f_{2, 3}, g_{2, 3}, h_{2, 3}) F_1(x_1, y_1, z_1) 
\frac{|x_1 - x'_1|^{\delta_1}}{(|x_1 - y_1| + |x_1 - z_1|)^{2 + \delta_1}}
\end{align*}
whenever $|x_1 - x'_1| \leq \max\{|x_1 - y_1|, |x_1 - z_1|\}/2$, 
\begin{align*}
& |K_{f_{2, 3}, g_{2, 3}, h_{2, 3}}(x_1, y_1, z_1) - K_{f_{2, 3}, g_{2, 3}, h_{2, 3}}(x_1, y'_1, z_1)|
\\
& \leq C(f_{2, 3}, g_{2, 3}, h_{2, 3}) F_1(x_1, y_1, z_1) 
\frac{|y_1 - y'_1|^{\delta_1}}{(|x_1 - y_1| + |x_1 - z_1|)^{2 + \delta_1}}
\end{align*}
whenever $|y_1 - y'_1| \leq \max\{|x_1 - y_1|, |x_1 - z_1|\}/2$, and 
\begin{align*}
& |K_{f_{2, 3}, g_{2, 3}, h_{2, 3}}(x_1, y_1, z_1) - K_{f_{2, 3}, g_{2, 3}, h_{2, 3}}(x_1, y_1, z'_1)|
\\
& \leq C(f_{2, 3}, g_{2, 3}, h_{2, 3}) F_1(x_1, y_1, z_1) 
\frac{|z_1 - z'_1|^{\delta_1}}{(|x_1 - y_1| + |x_1 - z_1|)^{2 + \delta_1}}
\end{align*}
whenever $|z_1 - z'_1| \leq \max\{|x_1 - y_1|, |x_1 - z_1|\}/2$.

\item\label{Bpartial-13} the function $F_1$ above is given by 
\begin{align*}
F_1(x_1, y_1, z_1) 
:= F_{1, 1}(|x_1 - y_1| + |x_1 - z_1|) 
F_{1, 2}(|x_1 - y_1| + |x_1 - z_1|) 
F_{1, 3}(|x_1 + y_1| + |x_1 + z_1|), 
\end{align*}
where $(F_{1, 1}, F_{1, 2}, F_{1, 3}) \in \mathscr{F}$.

\item\label{Bpartial-14} the bound $C(f_{2, 3}, g_{2, 3}, h_{2, 3})$ above verifies  
\begin{align*}
& C(\mathbf{1}_{I_{2, 3}}, \mathbf{1}_{I_{2, 3}}, \mathbf{1}_{I_{2, 3}}) 
+ C(\mathbf{1}_{I_{2, 3}}, \mathbf{1}_{I_{2, 3}}, a_{I_{2, 3}}) 
\le F_2(I_2) \, |I_2| \, F_3(I_3) \, |I_3|,  
\\
& C(\mathbf{1}_{I_{2, 3}}, a_{I_{2, 3}}, \mathbf{1}_{I_{2, 3}}) 
+ C(a_{I_{2, 3}}, \mathbf{1}_{I_{2, 3}}, \mathbf{1}_{I_{2, 3}}) 
\le F_2(I_2) \, |I_2| \, F_3(I_3) \, |I_3|,  
\end{align*}
for all rectangles $I_{2, 3} \subset \R^2$ and all functions $a_{I_{2, 3}}$ satisfying $\supp(a_{I_{2, 3}}) \subset I_{2, 3}$, $|a_{I_{2, 3}}| \le 1$, and $\int_{\R^2} a_{I_{2, 3}} \, dx_{2, 3}=0$, where $F_2, F_3 \in \mathscr{F}_0$. 
\end{list} 
\end{definition}

\begin{definition}\label{def:Bpartial-2}
A bilinear operator $T$ admits a \emph{compact partial kernel representation on the parameter $\{2, 3\}$} if the following hold. If $f = f_1 \otimes f_{2, 3}$, $g = g_1 \otimes g_{2, 3}$, and $h = h_1 \otimes h_{2, 3}$ with $\supp(f_{2, 3}) \cap \supp(g_{2, 3}) \cap \supp(h_{2, 3}) = \emptyset$, then 
\begin{align*}
\langle T(f, g), h \rangle 
= \int_{\R^2} \int_{\R^2} \int_{\R^2} 
K_{f_1, g_1, h_1}(x_{2, 3}, y_{2, 3}, z_{2, 3}) 
f_{2, 3}(y_{2, 3}) g_{2, 3}(z_{2, 3}) h_{2, 3}(x_{2, 3}) \, dx_{2, 3} \, dy_{2, 3} \, dz_{2, 3}, 
\end{align*}
where the kernel $K_{f_1, g_1, h_1}: (\R^2 \times \R^2 \times \R^2) \setminus \big\{(x_{2, 3}, y_{2, 3}, z_{2, 3}): x_2 = y_2 = z_2 \text{ or } x_3 = y_3 = z_3 \big\} \rightarrow \C$ satisfies  
\begin{list}{\rm (\theenumi)}{\usecounter{enumi}\leftmargin=1.2cm \labelwidth=1cm \itemsep=0.2cm \topsep=0.2cm \renewcommand{\theenumi}{\arabic{enumi}}} 

\item\label{Bpartial-21} the size condition:
\begin{align*}
|K_{f_1, g_1, h_1}(x_{2, 3}, y_{2, 3}, z_{2, 3})| 
\leq C(f_1, g_1, h_1) D_{\theta} \big((t, x_{2, 3} - y_{2, 3}), x-z \big) 
\prod_{i=2}^3 \frac{F_i(x_i, y_i, z_i)}{(|x_i - y_i| + |x_i - z_i|)^2}, 
\end{align*}
where $t:= |\supp f_1 \cup \supp g_1 \cup \supp h_1|$.

\item\label{Bpartial-22} the H\"{o}lder conditions: 
\begin{align*}
& |K_{f_1, g_1, h_1}(x_{2, 3}, y_{2, 3}, z_{2, 3}) - K_{f_1, g_1, h_1}(x'_{2, 3}, y_{2, 3}, z_{2, 3})|
\\ 
& \leq C(f_1, g_1, h_1) 
\bigg(\frac{|x_2 - x'_2|}{|x_2 - y_2| + |x_2 - z_2|} 
+ \frac{|x_3 - x'_3|}{|x_3 - y_3| + |x_3 - z_3|}\bigg)^{\delta_{2, 3}} 
\\
& \quad \times D_{\theta} \big((t, x_{2, 3} - y_{2, 3}), x - z \big) 
\prod_{i=2}^3 \frac{F_i(x_i, y_i, z_i)}{(|x_i - y_i| + |x_i - z_i|)^2} 
\end{align*}
whenever $x'_{2, 3} = (x'_2, x'_3)$ satisfies $|x_i - x'_i| \leq \max\{|x_i - y_i|, |x_i - z_i|\}/2$ for $i=2, 3$, 
\begin{align*}
& |K_{f_1, g_1, h_1}(x_{2, 3}, y_{2, 3}, z_{2, 3}) - K_{f_1, g_1, h_1}(x_{2, 3}, y'_{2, 3}, z_{2, 3})|
\\ 
& \leq C(f_1, g_1, h_1) 
\bigg(\frac{|y_2 - y'_2|}{|x_2 - y_2| + |x_2 - z_2|} 
+ \frac{|y_3 - y'_3|}{|x_3 - y_3| + |x_3 - z_3|}\bigg)^{\delta_{2, 3}} 
\\
& \quad \times D_{\theta} \big((t, x_{2, 3} - y_{2, 3}), x - z \big) 
\prod_{i=2}^3 \frac{F_i(x_i, y_i, z_i)}{(|x_i - y_i| + |x_i - z_i|)^2} 
\end{align*}
whenever $y'_{2, 3} = (y'_2, y'_3)$ satisfies $|y_i - y'_i| \leq \max\{|x_i - y_i|, |x_i - z_i|\}/2$ for $i=2, 3$, and 
\begin{align*}
& |K_{f_1, g_1, h_1}(x_{2, 3}, y_{2, 3}, z_{2, 3}) - K_{f_1, g_1, h_1}(x_{2, 3}, y_{2, 3}, z'_{2, 3})|
\\ 
& \leq C(f_1, g_1, h_1) 
\bigg(\frac{|z_2 - z'_2|}{|x_2 - y_2| + |x_2 - z_2|} 
+ \frac{|z_3 - z'_3|}{|x_3 - y_3| + |x_3 - z_3|}\bigg)^{\delta_{2, 3}} 
\\
& \quad \times D_{\theta} \big((t, x_{2, 3} - y_{2, 3}), x - z \big) 
\prod_{i=2}^3 \frac{F_i(x_i, y_i, z_i)}{(|x_i - y_i| + |x_i - z_i|)^2} 
\end{align*}
whenever $z'_{2, 3} = (z'_2, z'_3)$ satisfies $|z_i - z'_i| \leq \max\{|x_i - y_i|, |x_i - z_i|\}/2$ for $i=2, 3$.

\item\label{Bpartial-23} the function $F_i$ above is given by 
\begin{align*}
F_i(x_i, y_i, z_i) 
:= F_{i, 1}(|x_i - y_i| + |x_i - z_i|) 
F_{i, 2}(|x_i - y_i| + |x_i - z_i|) 
F_{i, 3}(|x_i + y_i| + |x_i + z_i|), 
\end{align*}
where $(F_{i, 1}, F_{i, 2}, F_{i, 3}) \in \mathscr{F}$, $i=2, 3$.

\item\label{Bpartial-24} the bound $C(f_1, g_1, h_1)$ above verifies  
\begin{align*}
C(\mathbf{1}_{I_1}, \mathbf{1}_{I_1}, \mathbf{1}_{I_1}) 
+ C(\mathbf{1}_{I_1}, \mathbf{1}_{I_1}, a_{I_1}) 
+ C(\mathbf{1}_{I_1}, a_{I_1}, \mathbf{1}_{I_1}) 
+ C(a_{I_1}, \mathbf{1}_{I_1}, \mathbf{1}_{I_1}) 
\le F_1(I_1) \, |I_1|, 
\end{align*}
for all intervals $I_1 \subset \R$ and all functions $a_{I_1}$ satisfying $\supp(a_{I_1}) \subset I_1$, $|a_{I_1}| \le 1$, and $\int_{\R} a_{I_1} \, dx_1=0$, where $F_1 \in \mathscr{F}_0$. 
\end{list} 
\end{definition}

\begin{definition}\label{def:Bpartial}
A bilinear operator $T$ admits the \emph{compact partial kernel representation} if it admits the compact partial kernel representation on the parameters $\{1\}$ and $\{2, 3\}$. 
\end{definition}

Let us turn to the compactness and cancellation assumptions for bilinear singular integrals associated with Zygmund dilations.

\begin{definition}\label{def:BWCP}
We say that $T$ satisfies the \emph{weak compactness property} if 
\begin{align*}
|\langle T(\mathbf{1}_I, \mathbf{1}_I), \mathbf{1}_I \rangle|
\leq F_1(I_1) F_2(I_2) F_3(I_3) \, |I|, 
\end{align*}
for all Zygmund rectangles $I=I_1 \times I_2 \times I_3$, where $F_1, F_2, F_3 \in \mathscr{F}_0$. If the functions $F_1, F_2, F_3$ above are replaced by a uniform constant $C \ge 1$, we say that $T$ satisfies the \emph{weak boundedness property}.
\end{definition}

\begin{definition}\label{def:Bcancellation}
We say that $T$ satisfies the \emph{cancellation condition} if   
\begin{align*}
\langle T(1 \otimes \mathbf{1}_{I_{2, 3}}, 1 \otimes \mathbf{1}_{J_{2, 3}}), 
h_{K_1} \otimes \mathbf{1}_{K_{2, 3}} \rangle
& = \langle T(\mathbf{1}_{I_1} \otimes 1, \mathbf{1}_{J_1} \otimes 1), 
\mathbf{1}_{K_1} \otimes h_{K_{2, 3}} \rangle
= 0, 
\\
\langle T_1^{*, 1}(1 \otimes \mathbf{1}_{I_{2, 3}}, 1 \otimes \mathbf{1}_{J_{2, 3}}), 
h_{K_1} \otimes \mathbf{1}_{K_{2, 3}} \rangle
& = \langle T_{2, 3}^{*, 1}(\mathbf{1}_{I_1} \otimes 1, \mathbf{1}_{J_1} \otimes 1), 
\mathbf{1}_{K_1} \otimes h_{K_{2, 3}} \rangle
= 0, 
\\
\langle T_1^{*, 2}(1 \otimes \mathbf{1}_{I_{2, 3}}, 1 \otimes \mathbf{1}_{J_{2, 3}}), 
h_{K_1} \otimes \mathbf{1}_{K_{2, 3}} \rangle
& = \langle T_{2, 3}^{*, 2}(\mathbf{1}_{I_1} \otimes 1, \mathbf{1}_{J_1} \otimes 1), 
\mathbf{1}_{K_1} \otimes h_{K_{2, 3}} \rangle
= 0, 
\end{align*}
for all rectangles $I = I_1 \times I_2 \times I_3=: I_1 \times I_{2, 3}$, $J = J_1 \times J_2 \times J_3=: J_1 \times J_{2, 3}$, and $K = K_1 \times K_2 \times K_3=: K_1 \times K_{2, 3}$.
\end{definition}

\begin{definition}\label{def:BCZO}
Let $T$ be a bilinear operator. 
\begin{itemize}

\item We say that $T$ admits the \emph{full kernel representation} if the functions $F_1, F_2, F_3$ in the compact full kernel representation are replaced by a uniform constant $C \ge 1$. 

\item We say that $T$ admits the \emph{partial kernel representation} if the functions $F_1, F_2, F_3$ in the compact partial kernel representation are replaced by a uniform constant $C \ge 1$. 

\item $T$ is called a \emph{$(\mathfrak{D}_{\vartheta}, D_{\theta}, \delta_1, \delta_{2, 3})$-BCZZ} if $T$ admits the full and partial kernel representations, and satisfies the weak boundedness property and the cancellation condition. 
\end{itemize}
\end{definition}

Now we define bilinear dyadic shifts associated with Zygmund dilations.

\begin{definition}\label{def:Bshift}
Given $k = (k_1, k_2, k_3) \in \N^3$, $\bm{l} = (l_1, l_2, l_3)$ with $l_j = (l_j^1, l_j^2, l_j^3) \in \N^3$, and $\D = \D^1 \times \D^2 \times \D^3$ with $\D^1, \D^2, \D^3$ being dyadic grids on $\R$, a \emph{compact bilinear Zygmund shift $\mathbf{S}_{\D}^{k, \bm{l}}$ of complexity $\bm{l}$} is an operator of the form  
\begin{align*}
\mathbf{S}_{\D}^{k, \bm{l}} (f, g)
:= \sum_{Q \in \D_{\lambda'(k)}} 
\sum_{\substack{I, J, K \in \D \\ I^{(l_1)} = J^{(l_2)} = K^{(l_3)} = Q}}   
a_{IJKQ} \, \langle f, h_{I_1} \otimes h_{I_{2, 3}}^0 \rangle \,
\langle g, h_{J_1}^0 \otimes h_{J_{2, 3}} \rangle \, h_K, 
\end{align*}
where $\lambda'(k) := 2^n$ with $|n| \le 3 \max\{k_1, k_2, k_3\}$, $\D_{\lambda'(k)} := \{Q \in \D: \ell(Q_3) = \lambda'(k) \ell(Q_1) \ell(Q_2)\}$, $I^{(k)} := I_1^{(k_1)} \times I_2^{(k_2)} \times I_3^{(k_3)}$, at least one rectangle $I_1 \times J_{2, 3}, I_1 \times K_{2, 3}, K_1 \times J_{2, 3}, K_1 \times K_{2, 3}$ is a Zygmund rectangle, $l_j^i \le k_i$ and $(l_j^3 - l_j^2)_+ \le (k_3 - k_2)_+$ for all $i, j = 1, 2, 3$. Moreover, the coefficients $a_{IJK}$ satisfy  
\begin{align*}
|a_{IJKQ}| 
\le \mathbf{F}(Q) \frac{|I|^{\frac12} |J|^{\frac12} |K|^{\frac12}}{|Q|^2}
\end{align*}
with  
\begin{align*}
\mathbf{F}(Q) \le 1 
\quad\text{and}\quad 
\lim_{N \to \infty} \mathbf{F}_N 
:= \lim_{N \to \infty} \sup_{\D} \sup_{Q \not\in \D_{\lambda'(k)}^N} 
\mathbf{F}(Q) = 0,  
\end{align*}
where $Q \notin \D_{\lambda'(k)}^N$ means $Q \in \D_{\lambda'(k)} \setminus \D_{\lambda'(k)}^N$, and 
\begin{align*}
\D_{\lambda'(k)}^N 
:= \big\{Q \in \D_{\lambda'(k)}:  \, 
&2^{-N} \le \ell(Q_1) \le 2^N, \, \rd(Q_1, 2^N \I) \le N, 
\\ \nonumber 
&2^{-N} \le \ell(Q_2) \le 2^N, \, \rd(Q_2, 2^N  \I) \le N, \, 
\rd(Q_3, 2^{2N} \I) \le 2N \big\}. 
\end{align*}
Moreover, any adjoint $(\mathbf{S}_{\D}^{k, \bm{l}})^{j_1^*, j_{2, 3}^*}$, $j_1, j_{2, 3} \in \{0, 1, 2\}$, is also called a \emph{compact bilinear Zygmund shift of complexity $\bm{l}$}. Here, the adjoint $j_1^*$ and $j_{2, 3}^*$ means that, for example, functions $h_{J_1}^0$ and $h_{K_1}$ switch places in the case $j_1 = 2$, functions $h_{I_{2, 3}}^0$ and $h_{K_{2, 3}}$ switch places in the case $j_{2, 3} = 1$, and the corresponding functions do not switch places in the case $j_1 = 0$ or $j_{2, 3} = 0$.
\end{definition}

Finally, let us introduce the compact dyadic representation of bilinear Calder\'{o}n--Zygmund operators associated with Zygmund dilations. 

\begin{definition}\label{def:Brepre}
Given $\vartheta \in (0, 2]$ and $\theta, \delta_1, \delta_{2, 3} \in (0, 1]$, we say that a $(\mathfrak{D}_{\vartheta}, D_{\theta}, \delta_1, \delta_{2, 3})$-BCZZ operator $T$ admits a \emph{compact bilinear dyadic representation} if there exists a constant $C_0 = C_0(T) \in (0,  \infty)$ so that for all compactly supported and bounded functions $f, g, h$ on $\R^3$, 
\begin{align*}
\langle T(f, g), h \rangle
&= C_0 \, \E_{\w} 
\sum_{\substack{k = (k_1, k_2, k_2) \\ k_1, k_2, k_3 \ge 2}} 
\sum_{m=1}^{m(k)} \varphi(k) 
\langle \mathbf{S}_{m, \D_{\w}}^k (f, g), h \rangle, 
\end{align*} 
where
\begin{align*}
& m(k) \lesssim (|k| + 1)^2, \quad 
\varphi(k) := 2^{-k_1 \delta_1} 
2^{- k_2 \min\{\delta_{2, 3}, \, \theta\}} 
2^{- \max\{k_3 - k_1 - k_2, \, 0\} \theta}, 
\\
& \text{$\mathbf{S}_{m, \D_{\w}}^k$ is a finite sum of compact bilinear Zygmund shifts of complexity $\bm{l}$ on $\D_{\w}$},
\\
& \text{and $\bm{l} = (l_1, l_2, l_3) \in \R^9$ with $l_j^i \le k_i + 1$ for all $i, j = 1, 2, 3$}.
\end{align*}
\end{definition}

\section{A compact bilinear dyadic representation}\label{sec:Brepre}
Now let us demonstrate Theorem \ref{thm:Brepre}. Our proof utilizes a hybrid of the arguments in Section \ref{sec:cdr} and \cite{ALM, CLSY}. Given intervals $I_i, J_i, K_i \subset \R$, the sidelength of them satisfies one of the following cases: (1) $\ell(I_i), \ell(J_i) > \ell(K_i)$, (2) $\ell(I_i), \ell(K_i) > \ell(J_i)$, (3) $\ell(J_i), \ell(K_i) > \ell(I_i)$, (4) $\ell(I_i) > \ell(J_i) = \ell(K_i)$, (5) $\ell(J_i) > \ell(I_i) = \ell(K_i)$, (6) $\ell(K_i) > \ell(I_i) = \ell(J_i)$, (7) $\ell(I_i) = \ell(J_i) = \ell(K_i)$. Then as done in \eqref{TDE-1}, there holds 
\begin{align*}
\langle T(f_1, f_2), f_3 \rangle 
= \sum_{\substack{I_1, J_1, K_1 \in \D_1 \\ \ell(I_1) = \ell(J_1) = \ell(K_1)}} 
& \big[\langle T (E_{I_1} f_1, E_{J_1} f_2), \Delta_{K_1} f_3 \rangle 
+ \langle T (E_{I_1} f_1, \Delta_{I_2} f_2), E_{K_1} f_3 \rangle 
\\
& + \langle T (\Delta_{I_1} f_1, E_{J_1} f_2), E_{K_1} f_3 \rangle 
+ \langle T (E_{I_1} f_1, \Delta_{J_1} f_2), \Delta_{K_1} f_3 \rangle 
\\
& + \langle T (\Delta_{I_1} f_1, E_{J_1} f_2), \Delta_{K_1} f_3 \rangle 
+ \langle T (\Delta_{I_1} f_1, \Delta_{J_1} f_2), E_{K_1} f_3 \rangle  
\\
& + \langle T (\Delta_{I_1} f_1, \Delta_{J_1} f_2), \Delta_{K_1} f_3 \rangle \big]. 
\end{align*}
Similarly, 
\begin{align*}
\langle T (f_1, f_2), f_3 \rangle 
= \sum_{\substack{I_{2, 3}, J_{2, 3}, K_{2, 3} \in \D_{\ell(I_1)}^{2, 3} \\ \ell(I_2) = \ell(J_2) = \ell(K_2)}}  
& \big[ \langle T (E_{I_{2, 3}} f_1, E_{J_{2, 3}} f_2), \Delta_{K_{2, 3}} f_3 \rangle
+ \langle T (E_{I_{2, 3}} f_1, \Delta_{J_{2, 3}} f_2), E_{K_{2, 3}} f_3 \rangle
\\
& + \langle T (\Delta_{I_{2, 3}} f_1, E_{J_{2, 3}} f_2), E_{K_{2, 3}} f_3 \rangle
+ \langle T (E_{I_{2, 3}} f_1, \Delta_{J_{2, 3}} f_2), \Delta_{K_{2, 3}} f_3 \rangle
\\
& + \langle T (\Delta_{I_{2, 3}} f_1, E_{J_{2, 3}} f_2), \Delta_{K_{2, 3}} f_3 \rangle
+ \langle T (\Delta_{I_{2, 3}} f_1, \Delta_{J_{2, 3}} f_2), E_{K_{2, 3}} f_3 \rangle
\\
& + \langle T (\Delta_{I_{2, 3}} f_1, \Delta_{J_{2, 3}} f_2), \Delta_{K_{2, 3}} f_3 \rangle \big]. 
\end{align*}
Combining the two equalities above and then taking an expectation, we have $\langle T (f_1, f_2), f_3 \rangle = \sum_{j=1}^{49} \E_{\w} \mathscr{S}_j(\w)$. By symmetry, it is enough to treat the first term: 
\begin{align*}
\mathscr{S}_1(\w)
& := \sum_{\substack{I, J, K \in \D_{\w, \Z} \\ \ell(I) = \ell(J) = \ell(K)}} 
\langle T (E_{I_1} E_{I_{2, 3}} f_1, E_{J_1} E_{J_{2, 3}} f_2), 
\Delta_{K_1} \Delta_{K_{2, 3}} f_3 \rangle 
\\
& = \sum_{\substack{I, J, K \in \D_{\w, \Z} \\ \ell(I) = \ell(J) = \ell(K)}} 
\langle T (h_I^0, h_J^0), h_{K, \Z} \rangle 
\langle f_1, h_I^0 \rangle 
\langle f_2, h_J^0 \rangle
\langle f_3, h_{K, \Z} \rangle.
\end{align*}
Note that by the cancellation condition (cf. Definition \ref{def:Bcancellation}), 
\begin{align*}
\sum_{\substack{I, J, K \in \D_{\w, \Z} \\ \ell(I) = \ell(J) = \ell(K)}} 
\langle T (h_I^0, h_J^0), h_{K, \Z} \rangle 
\langle f_1, h_{K_1 \times I_{2, 3}}^0 \rangle 
\langle f_2, h_{K_1 \times J_{2, 3}}^0 \rangle
\langle f_3, h_{K, \Z} \rangle = 0,
\end{align*}
because the above can be rewritten as
\begin{align*}
& \sum_{\substack{K \in \D_{\w, \Z}, \, I_{2, 3}, J_{2, 3} \in \D_{\ell(K_1)}^{2, 3} \\ \ell(I_{2, 3}) = \ell(J_{2, 3}) = \ell(K_{2, 3})}} 
\sum_{K'_{2, 3} \in \ch(K_{2, 3})} 
\langle T (1 \otimes \mathbf{1}_{I_{2, 3}}, 1 \otimes \mathbf{1}_{J_{2, 3}}), 
h_{K_1} \otimes \mathbf{1}_{K'_{2, 3}} \rangle 
\\
& \qquad\qquad\qquad\qquad \times 
|K|^{-1} \langle h_{K_{2, 3}} \rangle_{K'_{2, 3}}
\langle f_1, h_{K_1 \times I_{2, 3}}^0 \rangle 
\langle f_2, h_{K_1 \times J_{2, 3}}^0 \rangle
\langle f_3, h_{K, \Z} \rangle.
\end{align*}
Likewise,  
\begin{align*}
& \sum_{\substack{I, J, K \in \D_{\w, \Z} \\ \ell(I) = \ell(J) = \ell(K)}} 
\langle T (h_I^0, h_J^0), h_{K, \Z} \rangle 
\langle f_1, h_{I_1 \times K_{2, 3}}^0 \rangle 
\langle f_2, h_{J_1 \times K_{2, 3}}^0 \rangle
\langle f_3, h_{K, \Z} \rangle = 0
\end{align*}
and
\begin{align*}
& \sum_{\substack{I, J, K \in \D_{\w, \Z} \\ \ell(I) = \ell(J) = \ell(K)}} 
\langle T (h_I^0, h_J^0), h_{K, \Z} \rangle 
\langle f_1, h_K^0 \rangle 
\langle f_2, h_K^0 \rangle
\langle f_3, h_{K, \Z} \rangle = 0.
\end{align*}
Thus, 
\begin{align*}
\mathscr{S}_1(\w)
= \sum_{\substack{I, J, K \in \D_{\w, \Z} \\ \ell(I) = \ell(J) = \ell(K)}} 
\langle T (h_I^0, h_J^0), h_{K, \Z} \rangle \, \Xi(I, J, K)
\end{align*}
where
\begin{align*}
\Xi(I, J, K)
:= & \big[\langle f_1, h_I^0 \rangle \langle f_2, h_J^0 \rangle
- \langle f_1, h_{K_1 \times I_{2, 3}}^0 \rangle \langle f_2, h_{K_1 \times J_{2, 3}}^0 \rangle
\\
& \, \, - \langle f_1, h_{I_1 \times K_{2, 3}}^0 \rangle \langle f_2, h_{J_1 \times K_{2, 3}}^0 \rangle
+ \langle f_1, h_K^0 \rangle \langle f_2, h_K^0 \rangle \big]
\langle f_3, h_{K, \Z} \rangle.
\end{align*}
Analogously to \eqref{indep-1}, by independence and \eqref{indep-2}, 
\begin{align*}
\E_{\w} \mathscr{S}_1(\w)
& = 8 \, \E_{\w} \sum_{k_1, k_2, k_3 \ge 2}  
\sum_{\substack{I, J \in \D_{\w, \Z}, \, K \in \D_{\w, \Z}^k \\ \ell(I) = \ell(J) = \ell(K) \\ 2^{k_i-3} < \max\{\rd(I_i, K_i), \rd(J_i, K_i)\} \le 2^{k_i-2}}} 
\langle T (h_I^0, h_J^0), h_{K, \Z} \rangle \, \Xi(I, J, K)
\\
& = 8 C_0 \, \E_{\w} \sum_{\substack{k = (k_1, k_2, k_3) \\ k_1, k_2, k_3 \ge 2}} 
\varphi(k) 
\sum_{Q \in \D_{\lambda(k)}} 
\sum_{\substack{I, J, K \in \D_{\w, \Z} \\ I^{(k)} = J^{(k)} = K^{(k)} = Q}} 
a_{IJKQ} \, \Xi(I, J, K)
\end{align*}
where $\lambda(k) := 2^{-k_1 - k_2 + k_3}$ and
\begin{align*}
a_{IJKQ}
:= \frac{\langle T (h_I^0, h_J^0), h_{K, \Z} \rangle}{C_0 \, \varphi(k)}
\end{align*}
if $I, J \in \D_{\w, \Z}$ and $K \in \D_{\w, \Z}^k$ satisfy $\ell(I) = \ell(J) = \ell(K)$, $2^{k_i-3} < \max\{\rd(I_i, K_i), \rd(J_i, K_i)\} \le 2^{k_i-2}$, $i=1, 2, 3$, and $a_{IJKQ} = 0$ otherwise. 

We are going to estimate $\langle T (h_I^0, h_J^0), h_{K, \Z} \rangle$ as in \eqref{GGIJ}. Once it is established, one can use a similar argument in \eqref{KDN}--\eqref{MKK} and the same decomposition of $\Xi(I, J, K)$ as in \cite[Section 5]{ALM} to obtain bilinear Zygmund shifts as desired. The details are left to the reader. 

Although we still need some good properties of admissible functions and kernel estimates as in Section \ref{sec:refined}, we do not list them explicitly and will use them below directly. Moreover, given $I, J, K \in \D_{\Z}$, the dyadic intervals $I_1, J_1, K_1$ must belong to one of the cases:
\begin{itemize}
\item Separated: $\max\{\rd(I_1, K_1), \rd(J_1, K_1)\} > 1$,

\item Adjacent: $\max\{\rd(I_1, K_1), \rd(J_1, K_1)\} = 1$ and $I_1 \cap K_1 = \emptyset$ or $J_1 \cap K_1 = \emptyset$,

\item Identical: $I_1 = J_1 = K_1$,
\end{itemize}
and the dyadic rectangles $I_{2, 3}, J_{2, 3}, K_{2, 3}$ must lie in one of the scenarios:
\begin{itemize}
\item Separated: $\max\{\rd(I_{2, 3}, K_{2, 3}), \rd(J_{2, 3}, K_{2, 3})\} > 1$,

\item Adjacent: $\max\{\rd(I_{2, 3}, K_{2, 3}), \rd(J_{2, 3}, K_{2, 3})\} = 1$ and $I_{2, 3} \cap K_{2, 3} = \emptyset$ or $J_{2, 3} \cap K_{2, 3} = \emptyset$,

\item Identical: $I_{2, 3} = J_{2, 3} = K_{2, 3}$.
\end{itemize}
As seen in Section \ref{sec:cdr}, an easy situation occurs in the Separated case. Thus, by symmetry, it suffices to handle three cases: Adjacent/Adjacent, Adjacent/Identical, and Identical/Identical.

In what follows, let $I, J \in \D_{\Z}$ and $K \in \D_{\Z}^k$ with $\ell(I) = \ell(J) = \ell(K)$, and let $Q$ denote a common dyadic ancestor of $I, J, K$, i.e., $Q = I^{(k)} = J^{(k)} = K^{(k)}$, where $k = (k_1, k_2, k_3)$. We my assume that $\vartheta \in (0, 1)$ because $\mathfrak{D}_{\vartheta}$ is monotone decreasing with respect to $\vartheta$.

\subsection{Adjacent/Adjacent}\label{sec:BAA}
Without loss of generality, we may assume that $I_1 \cap K_1 = \emptyset$ and $I_{2, 3} \cap K_{2, 3} = \emptyset$. 
In this case, by the size condition of the compact full kernel representation, 
\begin{align}\label{TAA}
|\langle T (h_I^0, h_J^0), h_{K, \Z} \rangle|
\le |I|^{-\frac32} \int_J \int_I \int_K
\mathfrak{D}_{\vartheta}(x - y, x - z) 
\prod_{i=1}^3 \frac{F_i(x_i, y_i, z_i)}{(|x_i - y_i| + |x_i - z_i|)^2} \, dx \, dy \, dz,
\end{align}
where
\begin{align}\label{FIM}
F_i(x_i, y_i, z_i) 
& := F_{i, 1}(|x_i - y_i| + |x_i - z_i|) F_{i, 2}(|x_i - y_i| + |x_i - z_i|) 
\\ \nonumber
&\qquad\times F_{i, 3} \bigg(1 + \frac{|x_i + y_i| + |x_i + z_i|}{1 + |x_i - y_i| + |x_i - z_i|} \bigg). 
\end{align}
Since $\phi(t) := \big(t + \frac1t \big)^{-\vartheta}$ is a convex function on $\R_+$ (because of $\phi''(t) > 0$), there holds
\begin{align}\label{tttheta}
\bigg(\frac{t_1 + t_2}{t_3} + \frac{t_3}{t_1 + t_2} \bigg)^{-\vartheta} 
\lesssim \bigg(\frac{t_1}{t_3} + \frac{t_3}{t_1} \bigg)^{-\vartheta} 
+ \bigg(\frac{t_2}{t_3} + \frac{t_3}{t_2} \bigg)^{-\vartheta}, 
\quad \forall t_1, t_2, t_3 > 0.
\end{align}
This leads to 
\begin{align*}
\mathfrak{D}_{\vartheta}(x - y, x - z) 
\lesssim G(x, y; z_1, z_2) + G(x, z; y_1, y_2),
\end{align*}
where
\begin{align*}
G(x, y; z_1, z_2)
:= \Bigg[\frac{\prod_{i=1}^2 (|x_i - y_i| + |x_i - z_i|)}{|x_3 - y_3|} 
+ \frac{|x_3 - y_3|}{\prod_{i=1}^2 (|x_i - y_i| + |x_i - z_i|)} \Bigg]^{-\vartheta}.
\end{align*}
If $\mathscr{A}_1$ and $\mathscr{A}_2$ denote the integrals in the right hand side of \eqref{TAA} with $G(x, y; z_1, z_2)$ and $G(x, z; y_1, y_2)$ in place of $\mathfrak{D}_{\vartheta}(x - y, x - z)$ respectively, then 
\begin{align*}
|\langle T (h_I^0, h_J^0), h_{K, \Z} \rangle|
\lesssim \mathscr{A}_1 + \mathscr{A}_2.
\end{align*}

Let us control $\mathscr{A}_1$. For all $(y_i, z_i, x_i) \in I_i \times J_i \times K_i$, the condition $I_i \cup J_i \cup K_i \subset Q_i$ implies $|x_i - y_i| + |x_i - z_i| \le 2 \ell(Q_i)$ and 
\begin{align*}
& 2 \bigg(1 + \frac{|x_i + y_i| + |x_i + z_i|}{1 + |x_i - y_i| + |x_i - z_i|}\bigg)
\ge 1 + \frac{2 (|y_i| + |z_i|)}{1 + |x_i - y_i| + |x_i - z_i|}
\\ 
& \ge 1 + \frac{4 \d(Q_i, \I)}{1 + 2 \ell(Q_i)}
\ge 1 + \frac{\d(Q_i, \I)}{\max\{\ell(Q_i), 1\}}
= \rd(Q_i, \I),
\end{align*}
which along with the monotonicity of $F_{i, 1}, F_{i, 2}, F_{i, 3}$ (see \eqref{list:P2}) gives
\begin{align}
\label{FXZ-1}
F_{i, 1}(|x_i - y_i| + |x_i - z_i|) & \le F_{i, 1}(\ell(Q_i)),
\\
\label{FXZ-2}
F_{i, 2}(|x_i - y_i| + |x_i - z_i|) & \le F_{i, 2}(|x_i - y_i|),
\\
\label{FXZ-3}
F_{i, 3} \bigg(1 + \frac{|x_i + y_i| + |x_i + z_i|}{1 + |x_i - y_i| + |x_i - z_i|} \bigg)
& \le F_{i, 3}(\rd(Q_i, \I)).
\end{align}
Then it follows from \eqref{FXZ-1}--\eqref{FXZ-3} that 
\begin{align}\label{AAF}
\mathscr{A}_1
\le |I|^{-\frac32} \int_I \int_K \mathscr{A}_1(x, y) 
\prod_{i=1}^3 F_{i, 1}(\ell(I_i)) F_{i, 2}(|x_i - y_i|) F_{i, 3}(\rd(I_i, \I)) \, dx \, dy, 
\end{align}
where
\begin{align*}
\mathscr{A}_1(x, y)
& := \int_J \frac{\Big[\frac{\prod_{i=1}^2 (|x_i - y_i| + |x_i - z_i|)}{|x_3 - y_3|} 
+ \frac{|x_3 - y_3|}{\prod_{i=1}^2 (|x_i - y_i| + |x_i - z_i|)} \Big]^{-\vartheta}}{\prod_{i=1}^3 (|x_i - y_i| + |x_i - z_i|)^2} \, dz.
\end{align*}
Note that 
\begin{align*}
\sup_{r>0} \int_{\Rn} \frac{r^{\alpha} |f(x)|}{(r + |x - x_0|)^{n+\alpha}} \, dx
\lesssim Mf(x_0), \quad \alpha > 0,
\end{align*}
which implies 
\begin{align*}
\mathscr{A}_1(x, y)
& \lesssim \int_{J_2} \int_{J_1} \frac{\Big[\frac{\prod_{i=1}^2 (|x_i - y_i| + |x_i - z_i|)}{|x_3 - y_3|} 
+ \frac{|x_3 - y_3|}{\prod_{i=1}^2 (|x_i - y_i| + |x_i - z_i|)} \Big]^{-\vartheta}}{|x_3 - y_3| \prod_{i=1}^2 (|x_i - y_i| + |x_i - z_i|)^2} \, dz_1 \, dz_2 
\\
& \le \mathbf{1}_{\{|x_1 - y_1| |x_2 - y_2| \ge |x_3 - y_3|\}} 
\int_{J_2} \int_{J_1} \frac{\Big[\frac{\prod_{i=1}^2 (|x_i - y_i| + |x_i - z_i|)}{|x_3 - y_3|}\Big]^{-\vartheta}}{|x_3 - y_3| \prod_{i=1}^2 (|x_i - y_i| + |x_i - z_i|)^2} \, dz_1 \, dz_2 
\\
&\quad + \mathbf{1}_{\{|x_1 - y_1| |x_2 - y_2| < |x_3 - y_3|\}} 
\int_{J_2} \int_{J_1} \frac{\Big[\frac{|x_3 - y_3|}{\prod_{i=1}^2 (|x_i - y_i| + |x_i - z_i|)} \Big]^{-\vartheta}}{|x_3 - y_3| \prod_{i=1}^2 (|x_i - y_i| + |x_i - z_i|)^2} \, dz_1 \, dz_2 
\\
& = \mathbf{1}_{\{|x_1 - y_1| |x_2 - y_2| \ge |x_3 - y_3|\}} 
\frac{1}{|x_3 - y_3|^{1-\vartheta}} 
\prod_{i=1}^2 \int_{J_i} \frac{dz_i}{(|x_i - y_i| + |x_i - z_i|)^{2+\vartheta}}
\\
& \quad + \mathbf{1}_{\{|x_1 - y_1| |x_2 - y_2| < |x_3 - y_3|\}} 
\frac{1}{|x_3 - y_3|^{1+\vartheta}} 
\prod_{i=1}^2 \int_{J_i} \frac{dz_i}{(|x_i - y_i| + |x_i - z_i|)^{2-\vartheta}}
\\
& \lesssim \mathbf{1}_{\{|x_1 - y_1| |x_2 - y_2| \ge |x_3 - y_3|\}} 
\frac{1}{|x_1 - y_1|^{1+\vartheta}} \frac{1}{|x_2 - y_2|^{1+\vartheta}}  \frac{1}{|x_3 - y_3|^{1-\vartheta}} 
\\
& \quad + \mathbf{1}_{\{|x_1 - y_1| |x_2 - y_2| < |x_3 - y_3|\}} 
\frac{1}{|x_1 - y_1|^{1-\vartheta}} \frac{1}{|x_2 - y_2|^{1-\vartheta}}  \frac{1}{|x_3 - y_3|^{1+\vartheta}} 
\\
& \simeq \frac{\Big[ \frac{|x_1 - y_1| |x_2 - y_2|}{|x_3 - y_3|} + \frac{|x_3 - y_3|}{|x_1 - y_1| |x_2 - y_2|} \Big]^{-\vartheta}}{\prod_{i=1}^3 |x_i - y_i|}
= D_{\vartheta}(x-y) \prod_{i=1}^3 \frac{1}{|x_i - y_i|},
\end{align*}
where we have used the condition $\vartheta \in (0, 1)$. Following the proof of \eqref{def:RIJ}, we use \eqref{AAF} to deduce
\begin{align*}
\mathscr{A}_1 
& \lesssim |I|^{-\frac32} \prod_{i=1}^3 F_{i, 1}(\ell(I_i)) F_{i, 3}(\rd(I_i, \I))
\int_I \int_K D_{\vartheta}(x-y) \prod_{i=1}^3 \frac{F_{i, 2}(|x_i - y_i|)}{|x_i - y_i|} \, dx \, dy 
\\
& \lesssim F_1(I_1, I_1) F_2(I_2, I_2) F_3(I_3, I_3)  \, |I|^{-\frac12},
\end{align*}
where $F_i(I_i, I_i)$ is defined in \eqref{def:FIK-1}. Symmetrically, $\mathscr{A}_2$ has the same bound. Therefore, 
\begin{align}\label{TAA-2}
|\langle T (h_I^0, h_J^0), h_{K, \Z} \rangle|
\lesssim F_1(I_1, I_1) F_2(I_2, I_2) F_3(I_3, I_3)  \, |I|^{-\frac12}.
\end{align}

\subsection{Adjacent/Identical}\label{sec:BAI}
We rewrite
\begin{align*}
\langle T (h_I^0, h_J^0), h_{K, \Z} \rangle
= \sum_{I'_{2, 3}, I''_{2, 3}, I'''_{2, 3} \in \ch(I_{2, 3})} 
|I|^{-1} \langle h_{I_1} \rangle_{I'''_1} 
\langle h_{I_{2, 3}} \rangle_{I'''_{2, 3}}
\langle T (\mathbf{1}_{I_1} \otimes \mathbf{1}_{I'_{2, 3}}, 
\mathbf{1}_{J_1} \otimes \mathbf{1}_{I''_{2, 3}}), 
\mathbf{1}_{K_1} \otimes \mathbf{1}_{I'''_{2, 3}} \rangle.
\end{align*}
Since the case $I'_{2, 3} \ne I'''_{2, 3}$ or $I''_{2, 3} \ne I'''_{2, 3}$ is similar to the Adjacent/Adjacent case, we have
\begin{align}\label{AIT}
|\langle T (\mathbf{1}_{I_1} \otimes \mathbf{1}_{I'_{2, 3}}, 
\mathbf{1}_{J_1} \otimes \mathbf{1}_{I''_{2, 3}}), 
\mathbf{1}_{K_1} \otimes \mathbf{1}_{I'''_{2, 3}} \rangle| 
\lesssim F_1(I_1, I_1) F_2(I_2, I_2) F_3(I_3, I_3) \, |I|.
\end{align}

In the case $I'_{2, 3} = I''_{2, 3} = I'''_{2, 3}$, we use the size condition of the compact partial representation to deduce
\begin{equation}\label{AIT-1}
\begin{aligned}
& |\langle T (\mathbf{1}_{I_1} \otimes \mathbf{1}_{I'_{2, 3}}, 
\mathbf{1}_{J_1} \otimes \mathbf{1}_{I''_{2, 3}}), 
\mathbf{1}_{K_1} \otimes \mathbf{1}_{I'''_{2, 3}} \rangle|
\\
& \le C(\mathbf{1}_{I'_{2, 3}}, \mathbf{1}_{I'_{2, 3}}, \mathbf{1}_{I'_{2, 3}}) 
\int_{J_1} \int_{I_1} \int_{K_1} 
\frac{F_1(x_1, y_1, z_1)}{(|x_1 - y_1| + |x_1 - z_1|)^2} \, dx_1 \, dy_1 \, dz_1,
\end{aligned}
\end{equation}
where $F_1(x_1, y_1, z_1)$ is given in \eqref{FIM}. By symmetry, we may assume that $I_1 \cap K_1 = \emptyset$. Then in the current setting, $k_1 = 2$ and $K_1 \subset 3I_1 \setminus I_1$. Note that for all $(x_1, y_1, z_1) \in K_1 \times I_1 \times J_1$, there hold $|x_1 - y_1| + |x_1 - z_1| \le 2 \ell(Q_1) = 8 \ell(I_1)$ and 
\begin{equation}\label{AIT-2}
\begin{aligned}
& 2 \bigg(1 + \frac{|x_1 + y_1| + |x_1 + z_1|}{1 + |x_1 - y_1| + |x_1 - z_1|}\bigg)
\ge 1 + \frac{2 (|y_1| + |z_1|)}{1 + |x_1 - y_1| + |x_1 - z_1|}
\\ 
& \ge 1 + \frac{4 \d(Q_1, \I)}{1 + 2 \ell(Q_1)}
\ge 1 + \frac{\d(Q_1, \I)}{\max\{\ell(Q_1), 1\}}
= \rd(Q_1, \I).
\end{aligned}
\end{equation}
Let $\eta \in (0, 1)$. Hence, by \eqref{AIT-1}--\eqref{AIT-2} and the monotonicity of $F_{1, 1}, F_{1, 2}, F_{1, 3}$,
\begin{align}\label{AIT-3}
& |\langle T (\mathbf{1}_{I_1} \otimes \mathbf{1}_{I'_{2, 3}}, 
\mathbf{1}_{J_1} \otimes \mathbf{1}_{I''_{2, 3}}), 
\mathbf{1}_{K_1} \otimes \mathbf{1}_{I'''_{2, 3}} \rangle|
\\ \nonumber 
& \lesssim F_2(I'_2) |I_2| \, F_3(I'_3) |I_3| \, 
F_{1, 1}(\ell(Q_1)) F_{1, 3}(\rd(Q_1, \I))
\\ \nonumber 
&\quad\times  \int_{I_1} \int_{3I_1 \setminus I_1} 
\frac{F_{1, 2}(|x_1 - y_1|)}{|x_1 - y_1|^{1 + \eta}} 
\bigg(\int_{J_1} \frac{dz_1}{|x_1 - z_1|^{1 - \eta}}\bigg) \, dx_1 \, dy_1 
\\ \nonumber
&\lesssim F_{1, 1}(\ell(I_1)) \widetilde{F}_{1, 2}(\ell(I_1)) F_{1, 3}(\rd(I_1, \I)) 
F_2(I'_2) F_3(I'_3)  \, |I|,
\end{align}
provided that Lemma \ref{lem:diag} gives 
\begin{align*}
&\int_{I_1} \int_{3I_1 \setminus I_1} 
\frac{F_{1, 2}(|x_1 - y_1|)}{|x_1 - y_1|^{1+\eta}} dx_1 \, dy_1 
\\
&\le \bigg[\int_{I_1} \int_{3I_1 \setminus I_1} 
\frac{dx_1 \, dy_1}{|x_1 - y_1|^{p(1 + \eta)}} \bigg]^{\frac1p}
\bigg[\int_{I_1} \int_{3I_1 \setminus I_1} 
F_{1, 2}(|x_1 - y_1|)^{p'} dx_1 \, dy_1 \bigg]^{\frac{1}{p'}} 
\\ 
&\lesssim \ell(I_1)^{\frac{2}{p}-1-\eta}  
\widetilde{F}_{1, 2}(\ell(I_1)) \ell(I_1)^{\frac{2}{p'}}
\le \widetilde{F}_{1, 2}(\ell(I_1)) \, \ell(I_1)^{1-\eta},  
\end{align*}
where $1 < p < \frac{2}{1+\eta}$. As a consequence of \eqref{AIT} and \eqref{AIT-3}, there holds
\begin{align*}
|\langle T (h_I^0, h_J^0), h_{K, \Z} \rangle|
\lesssim F_1(I_1, I_1) \widehat{F}_2(I_2, I_2) \widehat{F}_3(I_3, I_3) \, |I|^{-\frac12},
\end{align*}
where $F_i(\cdot, \cdot)$ and $\widehat{F}_i(\cdot, \cdot)$ are defined in \eqref{def:FIK-1}--\eqref{def:FIK-2}.

\subsection{Identical/Identical}\label{sec:BII}

\begin{align*}
\langle T (h_I^0, h_I^0), h_{I, \Z} \rangle
= \sum_{\substack{I'_1, I''_1, I'''_1 \in \ch(I_1) \\ I'_{2, 3}, I''_{2, 3}, I'''_{2, 3} \in \ch(I_{2, 3})}} 
|I|^{-1} \langle h_{I_1} \rangle_{I'''_1} 
\langle h_{I_{2, 3}} \rangle_{I'''_{2, 3}}
\langle T (\mathbf{1}_{I'_1} \otimes \mathbf{1}_{I'_{2, 3}}, 
\mathbf{1}_{I''_1} \otimes \mathbf{1}_{I''_{2, 3}}), 
\mathbf{1}_{I'''_1} \otimes \mathbf{1}_{I'''_{2, 3}} \rangle.
\end{align*}
If $I'_1 = I''_1 = I'''_1$ and $I'_{2, 3} = I''_{2, 3} = I'''_{2, 3}$, then the weak compactness property implies
\begin{align*}
|\langle T (\mathbf{1}_{I'_1} \otimes \mathbf{1}_{I'_{2, 3}}, 
\mathbf{1}_{I''_1} \otimes \mathbf{1}_{I''_{2, 3}}), 
\mathbf{1}_{I'''_1} \otimes \mathbf{1}_{I'''_{2, 3}} \rangle|
\lesssim F_1(I'_1) F_2(I'_2) F_3(I'_3) \, |I|.
\end{align*}
If $I'_1 \ne I'''_1$ or $I''_1 \ne I'''_1$ and $I'_{2, 3} = I''_{2, 3} = I'''_{2, 3}$, then it is similar to the second scenario in Section \ref{sec:BAI}. Hence, \eqref{AIT-3} gives
\begin{align*}
|\langle T (\mathbf{1}_{I'_1} \otimes \mathbf{1}_{I'_{2, 3}}, 
\mathbf{1}_{I''_1} \otimes \mathbf{1}_{I''_{2, 3}}), 
\mathbf{1}_{I'''_1} \otimes \mathbf{1}_{I'''_{2, 3}} \rangle|
\lesssim F_1(I_1, I_1) F_2(I'_2) F_3(I'_3) \, |I|.
\end{align*}
If $I'_1 = I''_1 = I'''_1$ and $I'_{2, 3} \ne I'''_{2, 3}$ or $I''_{2, 3} \ne I'''_{2, 3}$, it lies in the Identical/Adjacent case, which is symmetrical to the Adjacent/Identical case. Thus,
\begin{align*}
|\langle T (\mathbf{1}_{I'_1} \otimes \mathbf{1}_{I'_{2, 3}}, 
\mathbf{1}_{I''_1} \otimes \mathbf{1}_{I''_{2, 3}}), 
\mathbf{1}_{I'''_1} \otimes \mathbf{1}_{I'''_{2, 3}} \rangle|
\lesssim F_1(I'_1) F_2(I_2, I_2) F_3(I_3, I_3)  \, |I|.
\end{align*}
In addition, the case $I'_1 \ne I'''_1$ or $I''_1 \ne I'''_1$ and $I'_{2, 3} \ne I'''_{2, 3}$ or $I''_{2, 3} \ne I'''_{2, 3}$  is similar to the Adjacent/Adjacent case, which leads to
\begin{align*}
|\langle T (\mathbf{1}_{I'_1} \otimes \mathbf{1}_{I'_{2, 3}}, 
\mathbf{1}_{I''_1} \otimes \mathbf{1}_{I''_{2, 3}}), 
\mathbf{1}_{I'''_1} \otimes \mathbf{1}_{I'''_{2, 3}} \rangle|
\lesssim F_1(I_1, I_1) F_2(I_2, I_2) F_3(I_3, I_3) \, |I|.
\end{align*}
Collecting the estimates above, we conclude
\begin{align*}
|\langle T (h_I^0, h_I^0), h_{I, \Z} \rangle|
\lesssim \widehat{F}_1(I_1, I_1) \widehat{F}_2(I_2, I_2) \widehat{F}_3(I_3, I_3) \, |I|^{-\frac12}.
\end{align*}

\section{Compactness of bilinear operators}\label{sec:Bcpt}
In this section, we would like to prove Theorems \ref{thm:Bcpt} and \ref{thm:BSD-cpt}.

\subsection{Proof of Theorem \ref{thm:Bcpt}}
Fix $p, p_1, p_2 \in (1, \infty)$ such that $\frac1p = \frac{1}{p_1} + \frac{1}{p_2}$. Define 
\begin{align*}
T^N := \sum_{k \in \N^3: |k| \le N} \sum_{m=1}^{m(k)}
\varphi(k) \, \E_{\w} \mathbf{S}_{m, \D_{\w}}^k, \quad N \ge 1.
\end{align*}
It was shown in the proof of \cite[Proposition 6.2]{ALM} that 
\begin{align}\label{trr}
\sup_{\w} \|\mathbf{S}_{\D_{\w}}^{k, \bm{l}}\|_{L^{s_1} \times L^{s_2} \to L^s} 
\lesssim 2^{k_1 \eta} (|k| +1)^2, 
\end{align}
for all $\frac1s = \frac{1}{s_1} + \frac{1}{s_2}$ with $s_1, s_2 \in (1, \infty)$, where $\eta \in (0, 1)$ and the implicit constant is independent of $k$. Pick $\eta \in (0, \delta_1)$. Then by Theorem \ref{thm:Brepre}, \eqref{trr}, and Lemma \ref{lem:kkk} applied to $(\rho, \alpha_0, \alpha_1, \alpha_2, \alpha_3) = \big(4, \theta, \delta_1 - \eta, \min\{\delta_{2, 3}, \theta\}, 0 \big)$, we deduce  
\begin{equation}\label{trr-1}
\begin{aligned}
& \|T - T^N\|_{L^{p_1} \times L^{p_2} \to L^p} 
\le \sum_{|k| > N} \sum_{m=1}^{m(k)} 
\varphi(k) \, \E_{\w} \|\mathbf{S}_{m, \D_{\w}}^k\|_{L^{p_1} \times L^{p_2} \to L^p} 
\\
& \lesssim \sum_{|k| > N} 2^{k_1 \eta} (|k| + 1)^4 \varphi(k)
\lesssim N^{16} 2^{-N \min\{\delta_1 - \eta, \, \delta_{2, 3}, \, \theta\}/6} \to 0, 
\end{aligned}
\end{equation}
as $N \to \infty$. In addition, Theorem \ref{thm:BSD-cpt} gives
\begin{align}\label{trr-2}
\text{$T^N$ is compact from $L^{p_1}(\R^3) \times L^{p_2}(\R^3)$ to $L^p(\R^3)$}.
\end{align}
Hence, \eqref{trr-1} and \eqref{trr-2} imply
\begin{align}\label{trr-3}
\text{$T$ is compact from $L^{p_1}(\R^3) \times L^{p_2}(\R^3)$ to $L^p(\R^3)$}.
\end{align}

Now let $r_1, r_2 \in (1, \infty)$ and denote $\frac1r = \frac{1}{r_1} + \frac{1}{r_2}$. To prove the compactness of $T$ from $L^{r_1}(\R^3) \times L^{r_2}(\R^3)$ to $L^r(\R^3)$, in view of \eqref{trr-3}, we may assume that $r_1 \ne p_1$ or $r_2 \ne p_2$. Then choose $q_1, q_2 \in (1, \infty)$ so that the point $\big(\frac{1}{r_1}, \frac{1}{r_2}\big)$ lies in the segment connecting $\big(\frac{1}{p_1}, \frac{1}{p_2} \big)$ and $\big(\frac{1}{q_1}, \frac{1}{q_2}\big)$. That is, 
\begin{align}\label{trr-4}
\frac{1}{r_1} = \frac{1-\theta}{p_1} + \frac{\theta}{q_1}
\quad \text{and} \quad
\frac{1}{r_2} = \frac{1-\theta}{p_2} + \frac{\theta}{q_2}
\quad \text{for some } \theta \in (0, 1).
\end{align}
Let $\frac1q = \frac{1}{q_1} + \frac{1}{q_2}$. Obviously, $\frac1r = \frac{1 - \theta}{p} + \frac{\theta}{q}$. By \cite[Theorem 1.2]{ALM}, 
\begin{align}\label{trr-5}
\text{$T$ is bounded from $L^{q_1}(\R^3) \times L^{q_2}(\R^3)$ to $L^q(\R^3)$},
\end{align}
As a consequence of \eqref{trr-3}--\eqref{trr-5} and Theorem \ref{thm:inter-cpt}, we conclude that $T$ is compact from $L^{r_1}(\R^3) \times L^{r_2}(\R^3)$ to $L^r(\R^3)$. 
\qed

\subsection{Proof of Theorem \ref{thm:BSD-cpt}}
Using \eqref{trr} and the same strategy in the preceding section, we are reduced to proving 
\begin{align}\label{ES44}
\text{$\E_{\w} \mathbf{S}_{\D_{\w}}^{k, \bm{l}}$ is compact from $L^4(\R^3) \times L^4(\R^3)$ to $L^2(\R^3)$}.
\end{align}
By Theorem \ref{thm:KRLp}, \eqref{trr}, and Minkowski's inequality, it suffices to show
\begin{align}
\label{BSDKR-2}
\lim_{A \to \infty} & \sup_{\w} \sup_{\substack{\|f_1\|_{L^4} \le 1 \\ \|f_2\|_{L^4} \le 1}} 
\|\mathbf{S}_{\D_{\w}}^{k, \bm{l}} (f_1, f_2) \, \mathbf{1}_{B(0, A)^c}\|_{L^2} 
= 0, 
\\ 
\label{BSDKR-3}
\lim_{|v| \to 0} & \sup_{\w} \sup_{\substack{\|f_1\|_{L^4} \le 1 \\ \|f_2\|_{L^4} \le 1}}
\|\tau_v \, \mathbf{S}_{\D_{\w}}^{k, \bm{l}} (f_1, f_2)
- \mathbf{S}_{\D_{\w}}^{k, \bm{l}} (f_1, f_2)\|_{L^2} 
= 0. 
\end{align}

To proceed, we define  
\begin{align*}
\mathbf{S}_{\D}^{k, \bm{l}, N} (f_1, f_2)
:= \sum_{\substack{Q \notin \D_{\lambda'(k)}^N, \, I, J, K \in \D \\ I^{(l_1)} = J^{(l_2)} = K^{(l_3)} = Q}}   
a_{IJKQ} \, \langle f_1, h_{I_1} \otimes h_{I_{2, 3}}^0 \rangle \,
\langle f_2, h_{J_1}^0 \otimes h_{J_{2, 3}} \rangle \, h_K, 
\end{align*}
where the restriction on $I, J, K, Q$ is the same as in Definition \ref{def:Bshift}. Fix $k \in \N^3$. Let $A \ge 2^{3|k| + 100}$ and $N := \big[\frac13 (\log_2 A - 3|k| - 1) \big] \ge 10$. Similarly to \eqref{KN-2}, there holds
\begin{align}\label{BKN-2}
\bigcup_{Q \in \D_{\lambda'(k)}^N} Q 
\subset \big\{x \in \R^3: |x_1| \le 2^{2N}, |x_2| \le 2^{2N}, |x_3| \le 2^{3|k| + 3N +1} \big\} 
\subset B(0, A). 
\end{align}
Then, using \eqref{BKN-2} and mimicking the proof of \eqref{trr}, we have  
\begin{align}\label{BKN-3}
\sup_{\w} \|\mathbf{S}_{\D_{\w}}^{k, \bm{l}} (f_1, f_2) \, \mathbf{1}_{B(0, A)^c}\|_{L^2} 
\le \sup_{\w} \|\mathbf{S}_{\D_{\w}}^{k, \bm{l}, N} (f_1, f_2)\|_{L^2} 
\lesssim_k \mathbf{F}_N \|f_1\|_{L^4} \|f_2\|_{L^4}, 
\end{align}
where gives \eqref{BSDKR-2}.

Let us move on to the proof of \eqref{BSDKR-3}. Let $0 < |v| = |v_1| + |v_2| + |v_3| \ll 2^{-100}$ and $a \ge 2$ be an integer chosen later. There exists an integer $N=N(v) \ge 10$ such that $2^{-a (N+1)} \le |v| < 2^{- a N}$. We split  
\begin{align}\label{BHFX} 
\|\tau_v \, \mathbf{S}_{\D_{\w}}^{k, \bm{l}} (f_1, f_2)
- \mathbf{S}_{\D_{\w}}^{k, \bm{l}} (f_1, f_2)\|_{L^2} 
\le \big[\Gamma_{\w}^1(v; f_1, f_2)  + \Gamma_{\w}^2(v; f_1, f_2)  \big], 
\end{align}
where 
\begin{align*}
\Gamma_{\w}^1(v; f_1, f_2) 
&:= \bigg\|\sum_{\substack{Q \notin \D_{\lambda'(k)}^N, \, I, J, K \in \D \\ I^{(l_1)} = J^{(l_2)} = K^{(l_3)} = Q}}   
a_{IJKQ} \, \langle f_1, h_{I_1} \otimes h_{I_{2, 3}}^0 \rangle \,
\langle f_2, h_{J_1}^0 \otimes h_{J_{2, 3}} \rangle 
(\tau_v h_K - h_K) \bigg\|_{L^2}, 
\\ 
\Gamma_{\w}^2(v; f_1, f_2) 
&:= \bigg\|\sum_{\substack{Q \in \D_{\lambda'(k)}^N, \, I, J, K \in \D \\ I^{(l_1)} = J^{(l_2)} = K^{(l_3)} = Q}}   
a_{IJKQ} \, \langle f_1, h_{I_1} \otimes h_{I_{2, 3}}^0 \rangle \,
\langle f_2, h_{J_1}^0 \otimes h_{J_{2, 3}} \rangle
(\tau_v h_K - h_K) \bigg\|_{L^2}.
\end{align*}
The last inequality in \eqref{BKN-3} implies
\begin{equation}\label{BXVF}
\begin{aligned}
\Gamma_{\w}^1(v; f_1, f_2) 
& \le \|\tau_v \mathbf{S}_{\D_{\w}}^{k, \bm{l}, N} (f_1, f_2)\|_{L^2} 
+ \|\mathbf{S}_{\D_{\w}}^{k, \bm{l}, N} (f_1, f_2)\|_{L^2}
\\
& = 2 \|\mathbf{S}_{\D_{\w}}^{k, \bm{l}, N} (f_1, f_2)\|_{L^2} 
\lesssim_k \mathbf{F}_N \|f_1\|_{L^4} \|f_2\|_{L^4},  
\end{aligned}
\end{equation}
where the implicit constant is independent of $\w$, $v$, $f_1$, and $f_2$. Since $\lim_{N \to \infty} \mathbf{F}_N = 0$ and $|v| \to 0$ implies $N(v) \to \infty$, the estimate \eqref{BXVF} leads to 
\begin{align}\label{BHFX-1}
\lim_{|v| \to 0} \sup_{\w} 
\sup_{\substack{\|f_1\|_{L^4} \le 1 \\ \|f_2\|_{L^4} \le 1}} 
\Gamma_{\w}^1(v; f_1, f_2)  = 0. 
\end{align}
In order to analyze $\Gamma_{\w}^2(v; f_1, f_2)$, observe that
\begin{align}\label{Bcar-DN}
& \# \D_{\lambda'(k)}^N 
\lesssim_k \, 2^{3N} \times 2^{3N} \times 2^{5N}
= 2^{11N}, \quad\forall N \ge 10, 
\end{align}
and
\\
\begin{align}\label{BPLP}
\|\tau_v h_K - h_K\|_{L^2}
& \le \|\tau_{v_1} h_{K_1}\|_{L^2} \|\tau_{v_2} h_{K_2}\|_{L^2} 
\|\tau_{v_3} h_{K_3} - h_{K_3}\|_{L^2}
\\ \nonumber
& \quad + \|\tau_{v_1} h_{K_1}\|_{L^2} \|\tau_{v_2} h_{K_2} - h_{K_2}\|_{L^2} \|h_{K_3}\|_{L^2}
\\ \nonumber
& \quad + \|\tau_{v_1} h_{K_1} - h_{K_1}\|_{L^2} \|h_{K_2}\|_{L^2} \|h_{K_3}\|_{L^2}
\\ \nonumber
& \lesssim |v_3|^{\frac12} \ell(K_3)^{-\frac12} 
+ |v_2|^{\frac12} \ell(K_2)^{-\frac12} 
+ |v_1|^{\frac12} \ell(K_1)^{-\frac12}.  
\end{align}
Then by \eqref{Bcar-DN} and \eqref{BPLP},
\begin{align*}
\Gamma_{\w}^2(v; f_1, f_2)
& \lesssim \sum_{\substack{Q \in \D_{\lambda'(k)}^N, \, I, J, K \in \D \\ I^{(l_1)} = J^{(l_2)} = K^{(l_3)} = Q}}    
|Q|^{\frac12} \langle |f_1| \rangle_Q \langle |f_2| \rangle_Q
\|\tau_v h_K - h_K\|_{L^2}
\\
&\lesssim |v|^{\frac12} \sum_{\substack{Q \in \D_{\lambda'(k)}^N, \, I, J, K \in \D \\ I^{(l_1)} = J^{(l_2)} = K^{(l_3)} = Q}}   
\big[\ell(K_1)^{-\frac12} + \ell(K_2)^{-\frac12} + \ell(K_3)^{-\frac12}\big]  
\|f_1\|_{L^4} \|f_2\|_{L^4}
\\
&\lesssim |v|^{\frac12} \# \D_{\lambda'(k)}^N 
\big[ 2^{l_3^1/2} 2^{N/2} + 2^{l_3^2/2} 2^{N/2} + 2^{l_3^3/2} \lambda'(k)^{1/2} 2^N \big]
\|f_1\|_{L^4} \|f_2\|_{L^4}
\\
&\lesssim_k 2^{- (a/2-12)N} \|f_1\|_{L^4} \|f_2\|_{L^4}. 
\end{align*}
Picking $a > 24$, we deduce
\begin{align}\label{BHFX-2}
\lim_{|v| \to 0} \sup_{\w} 
\sup_{\substack{\|f_1\|_{L^4} \le 1 \\ \|f_2\|_{L^4} \le 1}} 
\Gamma_{\w}^2(v; f_1, f_2)  = 0. 
\end{align}
Consequently, \eqref{BSDKR-3} follows from\eqref{BHFX}, \eqref{BHFX-1}, and \eqref{BHFX-2}.  
\qed

\end{document}